\date{}
\newtheorem{lemma}{Lemma}
\newtheorem{remark}{Remark}
\theoremstyle{definition}
\newtheorem{defin}{Definition}
\newtheorem{prop}[lemma]{Proposition}
\newtheorem{cor}[lemma]{Corollary}
\newtheorem{theor}{Theorem}
\begin{document}
\title{ A characterization of $\ell^p$-spaces symmetrically finitely represented in symmetric sequence spaces.}
\thanks{{\rm *}The work was completed as a part of the implementation of the development program of the Scientific and Educational Mathematical Center Volga Federal District, agreement no. 075-02-2020-1488/1.}
\author[Astashkin]{Sergey V. Astashkin}
\address[Sergey V. Astashkin]{Department of Mathematics, Samara National Research University, Moskovskoye shosse 34, 443086, Samara, Russia
}
\email{\texttt{astash56@mail.ru}}
\maketitle

\vspace{-7mm}

\begin{abstract}
For a separable symmetric sequence space $X$ of fundamental type we identify the set ${\mathcal F}(X)$ of all $p\in [1,\infty]$ such that $\ell^p$ is block  finitely represented in the unit vector basis $\{e_k\}_{k=1}^\infty$ of
$X$ in such a way that the unit basis vectors of $\ell^p$ ($c_0$ if $p=\infty$) correspond to pairwise disjoint blocks of $\{e_k\}$ with the same ordered distribution. It turns out that ${\mathcal F}(X)$ coincides with the set of approximate eigenvalues of the operator $(x_k)\mapsto \sum_{k=2}^\infty x_{[k/2]}e_k$ in $X$. In turn, we establish that the latter set is the interval $[2^{\alpha_X},2^{\beta_X}]$, where $\alpha_X$ and $\beta_X$ are the Boyd indices of $X$. As an application, we find the set ${\mathcal F}(X)$ for arbitrary  Lorentz and separable sequence Orlicz spaces.  
\end{abstract}

\footnotetext[1]{2010 {\it Mathematics Subject Classification}: 46B70, 46B42.}
\footnotetext[2]{\textit{Key words and phrases}: $\ell^p$, finite representability, Banach lattice, symmetric sequence space, dilation operator, shift operator, approximate eigenvalue, Boyd indices, Orlicz  space, Lorentz space}

\newcommand{\fg}{\mathbb N}
\newcommand{\gh}{\mathbb R}

\section{Introduction.}
\label{Intro}

While a Banach space $X$ need not contain a subspace isomorphic to $\ell^p$ for some $1\le p<\infty$ or $c_0$ (as was shown by Tsirel'son \cite{Tsi} in 1974), at least one of these spaces is present in such a space $X$ {\it locally}. More precisely, for every $\varepsilon>0$ there exist finite-dimensional subspaces of $X$ of arbitrarily large dimension $n$ which are $(1+\varepsilon)$-isomorphic to $\ell_p^n$ for some $1\le p<\infty$ or $c_0^n$. This fact is the content of the famous result proved by Krivine in 1976 (we should recall here also the celebrated Dvoretzky theorem, which appeared long before Tsirelson's example, in 1961, showing that for every $\varepsilon>0$ any Banach space contains subspaces $(1+\varepsilon)$-isomorphic to $\ell_2^n$ for each $n\in\fg$; see \cite{Dvor}).
Let us introduce some necessary definitions.

Suppose $X$ is a Banach space, $1\le p\le\infty$, and $\{z_i\}_{i=1}^\infty$ is a bounded sequence in
$X$. The space $\ell^p$ is said to be {\it
block finitely represented in $\{z_i\}_{i=1}^\infty$} if for every
$n\in\mathbb{N}$ and $\varepsilon>0$ there exist $0=m_0<m_1<\dots <m_n$ and 
$\alpha_i\in\mathbb{R}$ such that the vectors $u_k=\sum_{i=m_{k-1}+1}^{m_k}\alpha_iz_i$, $k=1,2,\dots,n$, satisfy the inequality
\begin{equation}
\label{eq: blocks}
(1+\varepsilon)^{-1}\|a\|_p\le \Big\|\sum_{k=1}^n
a_ku_k\Big\|_X\le (1+\varepsilon)\|a\|_p
\end{equation}
for arbitrary $a=(a_k)_{k=1}^n\in\mathbb{R}^n$. In what follows,
$$
\|a\|_p:=\Big(\sum_{k=1}^n |a_k|^p\Big)^{1/p} \enskip\text{if}\enskip 
p<\infty,\enskip\text{and}\enskip
\|a\|_\infty:=\max_{k=1,2,\dots,n}|a_k|
$$
Moreover, the space $\ell^p$, $1\le p\le\infty$, is said to be {\it finitely represented} in a Banach space $X$ if for every $n\in\mathbb{N}$ and $\varepsilon>0$ there exist
$x_1,x_2,\dots,x_n\in X$ such that for any $a=(a_k)_{k=1}^n\in\mathbb{R}^n$
\begin{equation*}
\label{eq: 1}
(1+\varepsilon)^{-1}\|a\|_p\le \Big\|\sum_{k=1}^n
a_kx_k\Big\|_X\le (1+\varepsilon)\|a\|_p.
\end{equation*}

We can state now the above-mentioned famous result proved by Krivine in \cite{Kriv} (see also \cite[Theorem 11.3.9]{AK}).

\begin{theor}\label{Th:Krivine1} 
Let $\{z_i\}_{i=1}^\infty$ be an arbitrary normalized sequence in a Banach space $X$ such that the vectors $z_i$, $i=1,2,\dots$, do not form a relatively compact set. Then $\ell^p$ is block finitely
represented in $\{z_i\}_{i=1}^\infty$ for some $p$, $1\leq p\leq 
\infty$.
\end{theor}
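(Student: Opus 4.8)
The plan is to deduce Krivine's theorem from the (sharper) statement for spreading models, so the argument splits into a soft reduction step and a hard combinatorial core.

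\emph{Reduction to a spreading model.} Since $\{z_i\}$ is normalized but not relatively compact, some subsequence is $\delta$-separated for a fixed $\delta>0$. Applying the Brunel--Sucheston theorem to this subsequence, I would pass to a further subsequence $\{z_{i_k}\}$ generating a spreading model: there is a norm $\|\cdot\|_*$ on the space $c_{00}$ of finitely supported scalar sequences with
\[
\Big\|\sum_{j=1}^n a_je_j\Big\|_*=\lim_{k_1<\dots<k_n,\ k_1\to\infty}\Big\|\sum_{j=1}^n a_jz_{i_{k_j}}\Big\|_X,
\]
and the coordinate vectors $\{e_j\}$ form a normalized \emph{$1$-spreading} basic sequence in the completion $E$; the $\delta$-separation guarantees $\|e_1-e_2\|_*\ge\delta>0$, so the spreading model is non-degenerate. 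The point of this step is that block finite representability of $\ell^p$ in $\{e_j\}$ automatically yields $(1+\varepsilon)$-block finite representability of $\ell^p$ in $\{z_i\}$: any block vectors of $\{e_j\}$ realizing $\ell_p^n$ involve only finitely many coordinates with fixed coefficients, and by the displayed limit those coefficients can be transplanted to a sufficiently far-out block of $\{z_{i_k}\}$ with arbitrarily small distortion (the convergence is uniform on the unit sphere of $\mathbb R^n$, since norms on a finite-dimensional space converge uniformly once they converge pointwise). Hence it suffices to prove that \emph{$\ell^p$ is block finitely represented in every normalized $1$-spreading basic sequence $\{e_j\}$, for some $1\le p\le\infty$.}

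\emph{The combinatorial core.} Fix such an $\{e_j\}$ in $E$. For a block vector $x=\sum_{i=1}^m a_ie_i$ and $n\in\mathbb N$, let $x^{\langle n\rangle}$ denote the juxtaposition of $n$ successive disjoint spread-out copies of $x$, and for scalars $c=(c_1,\dots,c_n)$ let $\|c\|_x$ be the norm in $E$ of the corresponding combination of these copies; by $1$-spreading this depends only on $x$ and $c$, and $\|\cdot\|_x$ is a norm on $\mathbb R^n$ with $\|e_r\|_x=\|x\|$. The goal is a normalized block vector $y$ and an exponent $p$ for which $\|\cdot\|_y$ equals the $\ell^p$-norm on $\mathbb R^n$ for \emph{all} $n$ simultaneously; then disjoint spread copies of $y$ give isometric copies of $\ell_p^n$ as blocks of $\{e_j\}$. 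The driving mechanism is that substitution is associative ($x^{\langle mn\rangle}$ is, up to spreading, $(x^{\langle m\rangle})^{\langle n\rangle}$), so the scalar function $\psi_x(n):=\|x^{\langle n\rangle}\|$ behaves submultiplicatively along iterated substitutions and is thereby forced into power-type growth $n^{1/p_x}$ for an exponent $p_x$ read off from $\psi_x$. From here I would run a compactness/limiting argument (Krivine's original one, or the streamlined version due to Lemberg): recording each normalized block vector $x$ by its norm-data $(c\mapsto\|c\|_x)_n$ in a suitable compact space and iterating the renormalized doubling operation $x\mapsto x^{\langle 2\rangle}$, one extracts a limiting normalized block vector $y$ at which the doubling constant stabilizes; an intermediate-value argument in the doubling constant then pins down a single $p\in[1,\infty]$ (with $p=\infty$ corresponding to $c_0$) such that $\|\cdot\|_y$ coincides with the $\ell^p$-norm at every scale. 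Combining with the reduction step finishes the proof.

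\emph{Main obstacle.} The soft step is routine; the difficulty is entirely the last one --- reconciling the \emph{a priori} distinct local exponents produced by submultiplicativity at different scales into one exponent valid at all scales. This stabilization is the genuine heart of Krivine's theorem and is where the real work lies. A secondary subtlety is that the spreading model need not be unconditional, so signs cannot be inserted freely, and the degenerate summing-basis-type behavior (which still outputs $c_0$) must be tracked separately.
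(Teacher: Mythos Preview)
The paper does not prove Theorem~\ref{Th:Krivine1}. It is stated in the Introduction as Krivine's 1976 theorem, with attribution to \cite{Kriv} and a pointer to \cite[Theorem~11.3.9]{AK}, and is used purely as background and motivation for the paper's own results. So there is no ``paper's own proof'' to compare your proposal against.

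That said, your outline is the standard route to Krivine's theorem (Brunel--Sucheston reduction to a spreading model, followed by the combinatorial stabilization argument; Lemberg's simplification is indeed what \cite{AK} presents). Your reduction step is correct and complete. Your combinatorial core, however, is only a sketch: you correctly identify that the heart of the matter is stabilizing the doubling constant, but the sentence ``from here I would run a compactness/limiting argument\dots one extracts a limiting normalized block vector $y$ at which the doubling constant stabilizes; an intermediate-value argument\dots pins down a single $p$'' compresses all the actual work (the construction of commuting doubling and tripling operators on the spreading model, the passage to a further spreading model where $e_1+e_2$ and $e_1+e_2+e_3$ are simultaneously replaceable by $2^{1/p}e_1$ and $3^{1/p}e_1$, and the use of both primes to force the $\ell^p$-norm exactly rather than up to equivalence). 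You flag this honestly as the main obstacle, so as a \emph{plan} it is sound, but as a \emph{proof} the core step is not carried out. The secondary subtlety you mention---lack of unconditionality in the spreading model and the summing-basis/$c_0$ dichotomy---is real and is typically handled as a preliminary case distinction before the main argument.
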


This statement plays a~prominent r\^{o}le in the modern theory of Banach spaces and operators (see, e.g., \cite{JMST}, \cite{MilSch} and \cite{Pisier}). Theorem \ref{Th:Krivine1} is directly connected, in particular,  with the important notions of the Rademacher type and cotype of a Banach space (see, e.g., \cite{AK} and \cite{LT2}). We mention here only the profound result due to Maurey and Pisier (see \cite{MaPi}); it states that, for every infinite-dimensional Banach space $X$, the spaces $\ell_{p_X}$ and $\ell_{q_X}$, where $p_X=\sup\{p\in [1,2]\,:\,X\enskip\text{has type}\enskip p\}$ and
$q_X=\inf\{q\in [2,\infty]\,:\,X\enskip\text{is of cotype}\enskip q\}$,
are finitely represented in $X$. Similar result for Banach lattices and the notions of the upper and lower estimates, when the finite representability of $\ell^p$ in a Banach lattice $X$ has the additional property that the unit basis vectors of $\ell^p$ correspond to pairwise disjoint elements of $X$, was proved by Shepp in \cite{Shepp}. In this connection, it is natural to look for a description of the set of $p$ such that $\ell_p$ is finitely represented in a given Banach space.

In 1978,  Rosenthal \cite{Ros} established the following version of Theorem \ref{Th:Krivine1}.

\begin{theor}\label{Th:Rosenthal} 
Let $\{e_i\}_{i=1}^\infty$ be a subsymmetric unconditional basis for a Banach space $X$ such that $c_0$ is not finitely
represented in $X$. If the numbers $s_n$ are defined by $\|\sum_{i=1}^ne_i\|_X=n^{1/s_n}$, then $\ell^p$ is block finitely represented in $\{e_i\}$ provided that $p$ belongs to the interval $[\liminf_{n\to\infty}s_n,\limsup_{n\to\infty}s_n].$ 
\end{theor}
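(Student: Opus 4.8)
\medskip
\noindent\textbf{Proof idea.}
The plan is to reduce everything to the growth of the fundamental function and then to a Krivine-type stabilization. Since $\{e_i\}$ is subsymmetric and unconditional, first renorm $X$ so that both constants equal $1$; this changes the norm by a fixed factor, which is harmless because any $C$-block finite representation of $\ell^p$ automatically improves to a $(1+\varepsilon)$-one by the standard ``blocks of blocks'' self-improvement. Set $f(n)=\bigl\|\sum_{i=1}^n e_i\bigr\|_X$. Then $f(1)=1$, $f$ is nondecreasing, and $f(m+n)\le f(m)+f(n)$, so $f(n)/n$ is nonincreasing; hence $1\le f(n)\le n$, the numbers $s_n$ are well defined in $[1,\infty]$, and $f(n)\to\infty$ — otherwise $\{e_i\}$ would be equivalent to the $c_0$-basis, contradicting that $c_0$ is not finitely represented in $X$. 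Write $\beta=\liminf_n s_n$ and $\gamma=\limsup_n s_n$.

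The heart is the following claim: \emph{for every $p\in[\beta,\gamma]$ and every $N\in\mathbb N$, $\varepsilon>0$ there is $n$ with $(1+\varepsilon)^{-1}m^{1/p}\le f(mn)/f(n)\le(1+\varepsilon)m^{1/p}$ for $1\le m\le N$.} Granting it, I would produce the required blocks as follows. The naive choice — disjoint sums of equally many consecutive $e_i$'s — does not suffice (flat blocks reproduce ``Lorentz $\ell^{p,1}$-type'' behaviour rather than $\ell^p$); instead one takes the blocks to be normalized disjoint copies of a \emph{$p$-stable profile} $\sigma$, a discretization of $t\mapsto t^{-1/p}$ on a segment whose length is a large multiple of $n$. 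For disjointly supported vectors in a symmetric sequence space, $\|\sum_k a_k u_k\|_X$ equals the $X$-norm of the decreasing rearrangement of the ``merge'' of the dilated profiles $a_1\sigma,\dots,a_N\sigma$; the defining property of $t\mapsto t^{-1/p}$ is that superposing such copies with $\ell^p$-weights reproduces, up to the scalar $\|a\|_p$, a dilate of the same profile, so the displayed estimate on $f$ (applied for $m\le N^2$, so that the merge stays within the controlled range) yields $\|\sum_k a_k u_k\|_X=(1+o(1))\|a\|_p\|\sigma\|_X$. Normalizing $\|\sigma\|_X$ gives blocks $(1+\varepsilon')$-equivalent to the unit vector basis of $\ell_p^N$.

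To prove the claim I would work with the dilated fundamental functions $g_n(m):=f(mn)/f(n)$. The family $\{g_n\}_n$ is precompact for local uniform convergence on $[1,\infty)$ (its members are nondecreasing, satisfy $g_n(1)=1$ and $1\le g_n(m)\le m$, and $g_n(m)/m$ is nonincreasing), and the identity $g_n(lm)=g_{mn}(l)\,g_n(m)$ shows that its cluster set $G$ is invariant under the commuting dilations $(T_m g)(\cdot)=g(m\cdot)/g(m)$. An element of $G$ fixed by every $T_m$ is multiplicative, hence a power $m\mapsto m^{1/p}$ for some $p\in[1,\infty]$ — this is exactly Theorem \ref{Th:Krivine1} read in the present setting. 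Rosenthal's refinement amounts to the assertion that every exponent $1/p$ with $p\in[\beta,\gamma]$ is realized in this way; equivalently, for each such $p$ one can find arbitrarily large $n$ with $g_n$ uniformly close to $m\mapsto m^{1/p}$ on arbitrarily long initial segments, which is precisely the claim (and the value $p=\infty$, i.e. $c_0$, would require $\gamma=\infty$, which the hypothesis excludes). Producing, for a prescribed target exponent, a $T_m$-invariant member of $G$ — equivalently, extracting from the mere $\liminf$/$\limsup$ information on $(s_n)$ a genuinely long multiplicative stretch on which $\log f$ is near-linear with the prescribed slope — is the main obstacle, and is handled by a stabilization argument in the spirit of Krivine's proof: for instance, by passing to a $T_m$-invariant probability measure on $G$ and running an averaging argument, the relevant invariant being $\lim_K\tfrac1K\log_2 g(2^K)$, whose attainable values are shown to be exactly $[1/\gamma,1/\beta]$.

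Putting the pieces together: for every $p\in[\beta,\gamma]$ and all $N,\varepsilon$ one obtains ordered disjoint blocks of $\{e_i\}$ that are $(1+\varepsilon)$-equivalent to the unit vector basis of $\ell_p^N$, so $\ell^p$ is block finitely represented in $\{e_i\}$. The two delicate points are the block construction of the second paragraph (choosing the profile and its length so that the superposition error is genuinely $o(1)$) and, above all, the stabilization step of the third paragraph, which is the technical core and the place where the whole interval $[\liminf_n s_n,\limsup_n s_n]$, rather than a single exponent, has to be tracked.
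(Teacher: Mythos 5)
Your plan reduces the whole theorem to what you call the ``heart'' claim: that for every $p\in[\liminf s_n,\limsup s_n]$ and every $N,\varepsilon$ there is an $n$ with $f(mn)/f(n)=(1\pm\varepsilon)m^{1/p}$ for all $m\le N$, where $f(n)=\|\sum_{i\le n}e_i\|$. This step is not only left unproved (you yourself call it ``the main obstacle'' and gesture at an invariant-measure stabilization whose asserted conclusion --- that the attainable slopes are exactly $[1/\limsup s_n,1/\liminf s_n]$ --- is essentially the theorem restated); as stated it is false under the hypotheses of the theorem, so no completion of your argument along these lines can work. Take a nonincreasing weight $w$ whose partial sums $W(n)=\sum_{k\le n}w_k$ grow like $n^{1/4}$ on stretches $[A_{2i},A_{2i+1}]$ and like $n^{3/4}$ on $[A_{2i+1},A_{2i+2}]$, with $A_{j+1}\gg A_j$ (after passing to the least concave majorant, which only alters $W$ on multiplicative windows of bounded length). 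For $X=\lambda_1(w)$ the unit vector basis is $1$-symmetric, $f=W$, the estimate $W(mn)\ge c\,m^{1/4}W(n)$ yields a lower $q$-estimate with $q<\infty$ and hence $c_0$ is not finitely representable in $X$, and $[\liminf s_n,\limsup s_n]=[4/3,4]\ni 2$; yet for large $N$ every window $[n,Nn]$ lies in an exponent-$1/4$ stretch, an exponent-$3/4$ stretch, or straddles one transition, and in each case the two-scale test fails: e.g.\ at a transition placed so that $W(Nn)/W(n)\approx N^{1/2}$ one gets $W(N^{1/2}n)/W(n)\approx N^{1/8}$ instead of $N^{1/4}$. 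So intermediate exponents are \emph{not} witnessed by power-like windows of the fundamental function; they are produced (in Rosenthal's proof, and in this paper's own machinery via approximate eigenvectors of the operator $D$, Theorems 3--6) by vectors that mix the two scaling regimes, which is precisely the construction your outline omits.

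Two further points. First, your block construction invokes ``the $X$-norm of the decreasing rearrangement of the merge'' of the weighted profiles; the hypothesis gives only a subsymmetric unconditional basis, not a symmetric one, so norms of disjoint sums are not rearrangement-invariant and this identity is unavailable (and even in the symmetric case the norm of a non-flat $p$-stable profile is not determined by $f$ up to $1+\varepsilon$, so the claimed $o(1)$ superposition error would again need the false window claim at all relevant scales). Second, for the record, the paper does not prove this statement at all --- it is quoted from Rosenthal [Ros] --- so there is no internal proof to match; but both Rosenthal's argument and the paper's approach confirm that the decisive step is a genuine Krivine-type stabilization across scales, not a statement about $\|\sum_{i\le n}e_i\|$ alone. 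The peripheral reductions you make (renorming, $f(n)\to\infty$, exclusion of $p=\infty$, crude-to-exact self-improvement via Krivine's theorem) are fine, but the core of the theorem is missing.
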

From an inspection of the proof of Theorem \ref{Th:Rosenthal} (cf. the remark after the proof of Theorem 3.3 in \cite{Ros}) it follows that for every $p$, satisfying the hypothesis, any $n$ and $\varepsilon>0$ there exist pairwise disjoint blocks $u_k$, $k=1,2,\dots,n$, of the basis $\{e_i\}$, with the same ordered distribution, such that we have \eqref{eq: blocks}. We will say that blocks $u=\sum_{i=1}^{m}\alpha_ie_{n_i}$ and $v=\sum_{i=1}^{m}\beta_ie_{k_i}$ of the basis $\{e_i\}$ have {\it the same ordered distribution} if $\beta_i=\alpha_{\pi(i)}$,  $i=1,2,\dots,m$, for some permutation $\pi$ of the set $\{1,2,\dots,m\}$. 

Moreover, Theorem \ref{Th:Rosenthal} suggests that a condition, ensuring that $\ell^p$ is block finitely represented in a subsymmetric unconditional (in particular, symmetric) basis $\{e_i\}$ of a Banach space so that the unit basis vectors of $\ell^p$ correspond to blocks of this basis  with the same ordered distribution, can be in a natural way expressed by using suitable estimates for the norms of an appropriate dilation operator when it is restricted to the set $\{e_i\}$. 

Here, we consider a special class of Banach lattices, the symmetric (in other terminology, rearrangement invariant) sequence spaces $X$ modelled on $\mathbb{N}$ (for all necessary definitions see the next section). We will be interested in finding of {\it all} $p$ such that $\ell^p$ is block finitely represented in the unit vector basic sequence $\{e_i\}_{i=1}^\infty$ in $X$ so that the unit basis vectors of $\ell^p$ ($c_0$ if $p=\infty$) correspond to pairwise disjoint blocks $u_k$, $k=1,2,\dots,n$, of the sequence $\{e_i\}$, with the same ordered distribution. We will say in this case that $\ell^p$ is {\it symmetrically block finitely represented} in the unit vector basic sequence $\{e_i\}_{i=1}^\infty$ in $X$.

We will further adopt also the following terminology. Sequences $x=(x_k)_{k=1}^\infty$ and $y=(y_k)_{k=1}^\infty$ are said to have {\it the same ordered distribution}\footnote{Sometimes such sequences are called {\it equimeasurable}.} if there is a permutation $\pi$ of the set of positive integers such that $y_{\pi(k)}=x_k$ for all $k=1,2,\dots$.


\begin{defin}
Let $X$ be a symmetric sequence space, $1\le p\le\infty$. We say that $\ell^p$ is {\it symmetrically finitely represented} in $X$ if for every $n\in\fg$ and each $\varepsilon>0$ there exist pairwise disjoint sequences $x_k\in X$, $k=1,2,\dots,n$, with the same ordered distribution such that for every $(a_k)_{k=1}^n\in\gh^n$ we have
\begin{equation*}
(1+\varepsilon)^{-1}\|a\|_p\le \Big\|
\sum_{k=1}^na_kx_k\Big\|_X\le (1+\varepsilon) \|a\|_p.
\end{equation*}
\end{defin}
Clearly, if $\ell^p$ is symmetrically block finitely represented in the unit vector basic sequence $\{e_i\}_{i=1}^\infty$ in $X$, then $\ell^p$ is symmetrically finitely represented in $X$.

We will need also the following formally weaker property.

\begin{defin}
We say that $\ell^p$ is {\it crudely symmetrically finitely represented} in a symmetric sequence space $X$ if there exists a constant $C>0$ such that for every $n\in\fg$ we can find pairwise disjoint sequences $x_k\in X$, $k=1,2,\dots,n$, with the same ordered distribution such that for every $(a_k)_{k=1}^n\in\gh^n$ 
\begin{equation*}
C^{-1}\|a\|_p\le \Big\|
\sum_{k=1}^na_kx_k\Big\|_X\le C\|a\|_p.
\end{equation*}
\end{defin}

The set of all $p\in [1,\infty]$ such that  $\ell^p$ is symmetrically finitely represented (resp. crudely symmetrically finitely represented) in $X$ we will denote by ${\mathcal F}(X)$ (resp. ${\mathcal F}_c(X)$).


A key role will be played next by the following notion related to spectral properties of linear operators bounded in Banach spaces.

\begin{defin}
Let  
$T$ be a bounded linear operator on a Banach space $X$. A sequence
$\{u_n\}_{n=1}^\infty\subset
X$, $\|u_n\|=1$, $n=1,2,\dots$, is called an {\it approximate eigenvector} corresponding to an {\it approximate eigenvalue} $\lambda\in\gh$ for $T$ if
$\|Tu_n-\lambda u_n\|_X\to 0$ \footnote{In what follows, we consider   real Banach spaces. Note however that every bounded linear  operator in a Banach spaces over the complex field has at least one approximate eigenvalue (see, e.g., \cite[12.1]{MilSch})}. 
\end{defin}

The first main result of this paper (Theorem \ref{Theorem 1a}) indicates a direct connection of the problem of a description of the set ${\mathcal F}(X)$, for a separable symmetric sequence space $X$, with the identification of the set of approximate eigenvalues of the operator $D:=\tau_1\sigma_2$ in $X$, where $\sigma_2$ and  $\tau_1$ are the dilation and shift operators respectively defined by 
$$
{\sigma}_2 x := \left(x_{[\frac{1+n}{2}]}\right)_{n=1}^{\infty}\;\;\mbox{and}\;\; \tau_1 x:=(x_{n-1})_{n=1}^\infty,\;\;\mbox{where}\; x=(x_{n})_{n=1}^\infty$$
(we set $x_j=0$ if $j\not\in\mathbb{N}$). Specifically, we show that, for every separable symmetric sequence space $X$, $\ell^p$ is symmetrically block finitely represented in the unit vector basis $\{e_k\}$ of $X$ (equivalently, $p\in {\mathcal F}(X)$) if and only if  $2^{1/p}$ is an approximate eigenvalue of the operator $D$ in $X$.

As a consequence of Theorem \ref{Theorem 1a}, we prove that, for any  symmetric sequence space $X$, $\max {\mathcal F}(X)=1/\alpha_X$ and $\min{\mathcal F}(X)=1/\beta_X$ (see Theorem {\rm\ref{Th: Krivine3}). 
Let us mention that this result is stated without a proof in \cite[Theorem 2.b.6]{LT2} for both sequence and function symmetric  spaces (in the case of function symmetric  spaces on $(0,\infty)$ the proof can be found in the paper \cite{A-11}). In particular, from Theorem {\rm\ref{Th: Krivine3} it follows that ${\mathcal F}(X)\ne\emptyset$ for every symmetric sequence space $X$.

Observe that many properties of the dilation operator $\sigma_2$ (and hence those of $D$) in a symmetric space $X$ are determined to a great  extent by the values of the so-called Boyd indices $\alpha_X$ and $\beta_X$ of the space $X$. In particular, it can be rather easily shown (see Corollary \ref{gen case}) that the set of all approximate eigenvalues of the operator $D$  is contained in the interval $[2^{\alpha_X},2^{\beta_X}]$.

The second main result of the paper (Corollary \ref{cor-new}) shows that for a rather wide class of separable symmetric sequence spaces $X$ of fundamental type the set of all approximate eigenvalues of the operator $D$  coincides with the interval $[2^{\alpha_X},2^{\beta_X}]$.  It should be emphasized that the latter condition, imposed on $X$, is not so restrictive; all the  most known and important symmetric sequence spaces, e.g., Orlicz, Lorentz, Marcinkiewicz spaces, are of fundamental type.

As a result, we obtain Theorem \ref{Theorem 4}, which reads that, for any separable symmetric sequence space $X$ of fundamental type, we have ${\mathcal F}(X)={\mathcal F}_c(X)=[1/\beta_X,1/\alpha_X]$. Observe that, in contrast to Theorem \ref{Th:Rosenthal}, we  get rid of the extra condition that $c_0$ is not finitely represented in $X$.

Our approach to the problem of finding the set of approximate eigenvalues of the operator $D$ in a symmetric sequence space $X$ is based on its reduction to the similar task for the shift operator $\tau_1$ in some Banach sequence lattice $E_X$ modelled on $\fg$ such that the dyadic block basis $\{\sum_{i=2^{k-1}}^{2^{k}-1}e_i\}_{k=1}^\infty$ is equivalent in $X$ to the unit vector basis in $E_X$ (see Proposition \ref{Proposition 1}). 
We develop an idea applied by Kalton \cite{Kal} to the special case of Lorentz function spaces and corresponding weighted $\ell^p$-spaces when studying problems related to interpolation theory of operators, in particular, to a characterization of Calder\'{o}n couples of rearrangement invariant spaces (see also \cite[Lemma~2.2]{IK-01}).  

 In the concluding part of the paper, as an application of the results obtained,  we establish that ${\mathcal F}(X)=[1/\beta_X,1/\alpha_X]$ if $X$ is an arbitrary Lorentz space or a separable Orlicz space (see Theorems \ref{Theorem 4a} and \ref{Theorem 4b}).
 
Similar problems for symmetric function spaces on $(0,\infty)$ have been studied in \cite{A-11} and more recently in \cite{A-21}. On the one hand, in this case the situation gets a little more complicated than for sequence spaces; in particular, depending on the behaviour the doubling operator $x(t)\mapsto x(t/2)$ in such a space $X$ of fundamental type, the set of $p$, for which $\ell^p$ is symmetrically finitely represented in $X$, may be also a union of two intervals. On the other hand, from the technical point of view, the case of function spaces is somewhat simpler, because the operator $x(t)\mapsto x(t/2)$ is invertible (in contrast to the operator $D$, which is clearly not surjective).


\vskip0.5cm

\section{Preliminaries.}
\subsection{Banach sequence lattices.}
\label{prel1}
Here, we recall some definitions and results that relate to Banach sequence lattices; for a detailed exposition see, for example, the monographs \cite{KA}, \cite{LT2} and \cite{BSh}.

A Banach space $E$ of sequences of real numbers is said to be a {\it Banach sequence lattice} (or {\it Banach sequence ideal space}) if from $x=(x_k)_{k=1}^\infty\in E$ and $|y_k| \leq |x_k|$, $k=1,2,\dots$, it follows that $y=(y_k)_{k=1}^\infty\in E$ and $\|y\|_E \leq \|x\|_E$.

Note that from the convergence in norm in a Banach sequence lattice $E$ it follows the coordinate-wise convergence (see, e.g., \cite[Theorem~4.3.1]{KA}).

If $E$ is a Banach sequence lattice, then the {\it K\"{o}the dual} 
(or {\it associated}) lattice $E'$ consists of all $y=(y_k)_{k=1}^\infty\in E$ such that
$$
\|y\|_{E'}:=\sup\,\Bigl\{\sum_{k=1}^\infty{x_ky_k}:\;\;
\|x\|_{E}\,\leq{1}\Bigr\}<\infty.
$$
Observe that $E'$ is complete with respect to the norm $y\mapsto \|y\|_{E'}$ and is embedded isometrically into (Banach) dual space $E^*$; moreover, $E'=E^*$ if and only if $E$ is separable \cite[Corollary~6.1.2]{KA}. 

Every Banach sequence lattice $E$ is continuously embedded into its second K\"{o}the dual $E''$ and $\|x\|_{E''}\le\|x\|_E$ for $x\in E$. A Banach sequence lattice $E$ {\it has the Fatou property} (or {\it maximal}) if from $x^{(n)}=(x^{(n)}_k)_{k=1}^\infty\in E,$ $n=1,2,\dots,$ $\sup_{n=1,2,\dots}\|x^{(n)}\|_E<\infty$ and $x^{(n)}_k\to{x_k}$ as $n\to\infty$ for each $k=1,2,\dots$ it follows that $x=(x_k)_{k=1}^\infty\in E$ and $||x||_E\le \liminf_{n\to\infty}{\|x^{(n)}\|_E}.$ A Banach sequence lattice $E$ has the Fatou property if and only if the natural inclusion of $E$ into $E''$ is a surjective isometry \cite[Theorem~6.1.7]{KA}. 


\subsection{Symmetric sequence spaces.}
\label{prel2}
An important class of Banach lattices is formed by the so-called symmetric spaces. 

If $(u_{k})_{k=1}^{\infty }$ is a bounded sequence of real numbers then, in what follows, $(u_{k}^{\ast })_{k=1}^{\infty }$ denotes the nonincreasing permutation of the sequence $(|u_{k}|)_{k=1}^{\infty }$ defined by 
\begin{equation*}
u_{k}^{\ast }:=\inf_{\mathrm{card}\,A=k-1}\sup_{i\in\mathbb{N}\setminus
A}|u_i|,\;\;k\in\mathbb{N}.
\end{equation*}

A Banach sequence lattice $X$ is called \textit{symmetric sequence space} if $X\subset l^\infty$ and
the conditions $y_{k}^{\ast }=x_{k}^{\ast }$, $k=1,2,\dots $, $x=(x_{k})_{k=1}^\infty\in X$ imply that $y=(y_{k})_{k=1}^\infty\in X$ and $\|y\|_{X}=\|x\|_{X}$. In what follows, we will assume that any symmetric space is separable or has the Fatou property.

Clearly, if $X$ is a symmetric sequence space, $a=(a_k)_{k=1}^\infty\in X$ and $\pi$ is an arbitrary permutation of $\mathbb{N}$, then the sequence $a_\pi=(a_{\pi(k)})_{k=1}^\infty$ belongs to $X$ and $\|a_\pi\|_X=\|a\|_X.$

Let $X$ be a symmetric sequence space. The function $\varphi _X(n):=\| \chi _{\{1,2,\dots,n\}}\|_X$, $n\in\fg$, where $\chi_A$ is the characteristic function of a set $A$, is called the {\it fundamental function} of $X$. It is a nondecreasing and positive function such that $\varphi _X(n)/n$ is nonincreasing.

The most important examples of symmetric sequence spaces are the $\ell^p$-spaces, $1\le p\le\infty,$ with the usual norms
$$
{\|a\|}_{\ell^p}:=\left\{
\begin{array}{ll}
{\left( {\sum}_{k=1}^{\infty}{|a_k|}^{p} \right)}^{1/p}\;,\;& 1 \leq p < \infty\\
\sup\limits_{k=1,2,\dots}|a_k|\;,\;& p=\infty
\end{array}. \right.$$
Their generalization, the $\ell^{p,q}$-spaces, $1<p<\infty,$ $1\le q\le\infty,$ are equipped with the quasi-norms
$$
\|a\|_{\ell^{p,q}}:=\Big\{\sum_{k=1}^{\infty}(a_k^*)^qk^{q/p-1}\Big\}^{1/q}\;\;\mbox{if}\;\;1\le q<\infty,$$ 
and
$$
\|a\|_{\ell^{p,\infty}}:=\sup_{k=1,2,\dots} a_k^*k^{1/p}.$$ 
The functional $a\mapsto \|a\|_{\ell^{p,q}}$ does not satisfy the triangle inequality for $p<q\le\infty$, but it is equivalent to a symmetric norm (see, e.g., \cite[Theorem~4.4.3]{BSh}).

In turn, $\ell^{p,q}$-spaces, if $1\le q\le p<\infty$, belong to a wider class of the Lorentz spaces. Let $1\le q<\infty$, and let $\{w_k\}_{k=1}^\infty$ be a nonincreasing sequence of positive numbers.
The Lorentz space $\lambda_q(w)$ (see, e.g., \cite{Lo-51} or \cite[Chapter~4e]{LT1}) consists of all sequences $a=(a_k)_{k=1}^\infty$ satisfying
$$
\|a\|_{\lambda_q(w)}:=\Big(\sum_{k=1}^\infty (a_k^*)^qw_k^q\Big)^{1/q}<\infty.$$
Since the classical Hardy-Littlewood inequality (see, e.g., \cite[Theorem~2.2.2]{BSh}) yields
$$
\|a\|_{\lambda_q(w)}=\sup_{\pi}\Big(\sum_{k=1}^\infty |a_{\pi(k)}|^qw_k^q\Big)^{1/q},$$
where the supremum is taken over all permutations $\pi$ of the set of positive integers, the functional $a\mapsto \|a\|_{\lambda_q(w)}$ defines on the space $\lambda_q(w)$ a symmetric norm.
Every Lorentz sequence space is separable and has the Fatou property.

Clearly, the fundamental function of $\lambda_q(w)$ is defined by
\begin{equation}\label{fund Lor}
\phi_{\lambda_q(w)}(n)=\Big(\sum_{k=1}^n w_k^q\Big)^{1/q},\;\;n\in\fg.\end{equation}
 
Another natural generalization of the $\ell_{p}$-spaces are Orlicz spaces (see \cite[Chapter~4]{LT1}, \cite{KR}, \cite{RR}, \cite{M-89}).
Let $N$ be an Orlicz function, that is, an increasing convex continuous function on $[0,\infty)$ such that $N(0)=0$ and $\lim_{t\to\infty}N(t)=\infty$. The {\it Orlicz sequence space} $l_N$ consists of all sequences $a=(a_k)_{k=1}^\infty$ such that
$$
\|a\|_{l_N}:=\inf\left\{u>0:\,\sum_{k=1}^\infty N\Big(\frac{|a_k|}{u}\Big)\le 1\right\}<\infty.$$
Without loss of generality, we will assume that $N(1) = 1$. In particular, if $N(s)=s^p$, $1\le p<\infty$, we obtain $\ell^p$ with the usual norm. 

Every Orlicz space $l_N$ has the Fatou property and it is separable if and only if the function $N$ satisfies the {\it $\Delta_2$-condition at zero}, i.e.,
$$
\limsup_{u\to 0}\frac{N(2u)}{N(u)}<\infty.$$
The fundamental function of $l_N$ can be calculated by the formula $\phi_{l_N}(n)=1/N^{-1}(1/n)$, $n\in\fg$, where $N^{-1}$ is the inverse function for $N$.

For more detailed information related to symmetric spaces we refer to the books \cite{LT2}, \cite{KPS} and \cite{BSh}.

\smallskip

\subsection{Indices of Banach sequence lattices and symmetric sequence spaces.}
\label{prel3}

Let $E$ be a Banach sequence lattice modelled on $\mathbb{N}$ such that the shift operator $\tau_n(x):=(x_{k-n})_{k=1}^\infty$, where $x=(x_{k})_{k=1}^\infty$, is bounded in $E$ for every $n\in\mathbb{Z}$ (we set $x_j=0$ if $j\not\in\mathbb{N}$). We define the following shift exponents:
$$
k_+(E):=\lim_{n\to\infty}\|\tau_n\|_{E}^{1/n}\;\;\mbox{and}\;\;k_-(E):=\lim_{n\to\infty}\|\tau_{-n}\|_{E}^{1/n}.$$

For each $m \in \mathbb N$ by ${\sigma}_m$ and ${\sigma}_{1/m}$ we define the {\it dilation operators} on the set of all numerical sequences as follows: if $a = (a_n)_{n=1}^\infty$, then
$$
{\sigma}_m a = \big (a_{[\frac{m-1+n}{m}]} \big)_{n=1}^{\infty} 
= \big ( \overbrace {a_1, a_1, \ldots, a_1}^{m}, \overbrace {a_2, a_2, \ldots, a_2}^{m}, \ldots \big)
$$
and
$$
{\sigma}_{1/m} a = \Big (\frac{1}{m} \sum_{k=(n-1)m + 1}^{nm} a_k \Big)_{n=1}^{\infty}.
$$
These operators are bounded in every symmetric sequence space $X$ and $\|\sigma_m\|_{X}\le m$, $\|\sigma_{1/m}\|_{X}\le 1$ for each $m \in \mathbb N$ (see \cite[Theorem~II.4.5]{KPS} or \cite[Chapter 3, Exercise~15, p.~178]{BSh}). This implies, in particular, that $1\le \|D\|_{X}\le 2$, where $D:=\tau_1\sigma_2$, or equivalently $D(x_k)= \sum_{k=2}^\infty x_{[k/2]}e_k$.

The numbers $\alpha_X$ and $\beta_X$ given by 
\begin{equation}\label{C1:Boyd}
\alpha_X:=-\lim\limits_{m\to +\infty}\frac{\log_2 {\|{\sigma}_{1/m}\|_{X}}}{\log_2m}=-\inf\limits_{m\ge 2}\frac{\log_2 {\|{\sigma}_{1/m}}\|_{X}}{\log_2m} 
\end{equation}
and
\begin{equation}\label{C2:Boyd}
\beta_X:=\lim\limits_{m\to +\infty}\frac{\log_2 {\|{\sigma}_{m}\|_{X }}}{\log_2m}=\inf\limits_{m\ge 2}\frac{\log_2 {\|{\sigma}_{m}}\|_{X}}{\log_2m},
\end{equation}
are called the {\it Boyd indices} of $X$. Clearly,
\begin{equation}\label{C3:Boyd}
\alpha_X=-\lim\limits_{n\to +\infty}\frac1n{\log_2 {\|{\sigma}_{2^{-n}}\|}_{X}}\;\;\mbox{and}\;\;\beta_X=\lim\limits_{n\to +\infty}\frac1n{\log_2 {\|{\sigma}_{2^{n}}\|}_{X}}.
\end{equation}

Let $\phi_X=\phi_X(n)$,  $n\in\fg$, be the fundamental function of a symmetric sequence space $X$. Then, we can introduce the  following {\it dilation functions} of $\phi_X$:
$$
M_X^0(n):=\sup_{m\in\fg}\frac{\phi_X(m)}{\phi_X(mn)}\;\;\mbox{and}\;\;M_X^\infty(n):=\sup_{m\in\fg}\frac{\phi_X(mn)}{\phi_X(m)},\;\;n\in\fg,
$$
and the so-called {\it fundamental indices} of  $X$ by
\begin{equation*}
\mu_X= -\lim_{n\to\infty}\frac{\log_2 M_X^0(n)}{\log_2 n}=-\inf_{n\ge 2}\frac{\log_2 M_X^0(n)}{\log_2 n}
\end{equation*}
and
\begin{equation*}
\nu_X= \lim_{n\to\infty}\frac{\log_2 M_X^\infty(n)}{\log_2 n}=\inf_{n\ge 2}\frac{\log_2 M_X^\infty(n)}{\log_2 n}
\end{equation*}
\cite[Exercise~14 for Chapter 3, p.~178]{BSh}. Hence, we can write
\begin{equation*}
\mu_X= -\lim_{n\to\infty}\frac{1}{n}\log_2 M_X^0(2^{n})\;\;\mbox{and}\;\;\nu_X= \lim_{n\to\infty}\frac{1}{n}\log_2 M_X^\infty(2^{n}).
\end{equation*}

From the above definitions it follows easily that $0\leq\alpha_X\le \mu_X\le \nu_X\leq\beta_X\leq 1$ for every symmetric sequence space $X$ (see also \cite[\S\,II.4.4]{KPS}).

We will say that a symmetric sequence space $X$ is of {\it fundamental type} whenever the corresponding Boyd and fundamental indices of $X$ coincide, i.e., $\alpha_X=\mu_{X}$ and $\beta_X=\nu_{X}$.

The most known and important r.i. spaces, in particular, all Lorentz and Orlicz spaces, are of fundamental type \footnote{The first example of a symmetric function space on $[0,1]$ of non-fundamental type was constructed by Shimogaki in \cite{Shimo}.}. 

Let us prove the last assertion for a Lorentz space $\lambda_q(w)$. In other words, taking into account formula \eqref{fund Lor}, we need to show that
\begin{equation}
\label{Boyd ind Lor} 
\alpha_{\lambda_q(w)}=-\lim_{n\to\infty}\frac{1}{n}\log_2\sup_{j\in\fg}\left(\frac{\sum_{k=1}^j w_k^q}{\sum_{k=1}^{2^nj} w_k^q}\right)^{1/q}
\end{equation}
and
\begin{equation}
\label{Boyd ind Lor1} 
\beta_{\lambda_q(w)}= \lim_{n\to\infty}\frac{1}{n}\log_2\sup_{j\in\fg}\left(\frac{\sum_{k=1}^{2^nj} w_k^q}{\sum_{k=1}^{j} w_k^q}\right)^{1/q}.
\end{equation}

We check only \eqref{Boyd ind Lor1}; equality \eqref{Boyd ind Lor} can be proved similarly. Since $(\sigma_{2^n}x)^*= \sigma_{2^n}(x^*)$, we can (and will) assume that a sequence $x=(x_k)_{k=1}^\infty$ is finitely supported and $x=x^*$. Then, applying the Abel transformation, we obtain for some $m\in\fg$ 
$$
\|\sigma_{2^n}x\|_{\lambda_q(w)}^q=\sum_{k=1}^m x_k^q\sum_{i=2^{n}(k-1)+1}^{2^{n}k}w_i^q=\sum_{k=1}^{m-1}(x_k^q-x_{k+1}^q) \sum_{i=1}^{2^{n}k}w_i^q+x_m^q\sum_{i=1}^{2^{n}m}w_i^q.$$ 
Hence, 
\begin{eqnarray*}
\|\sigma_{2^n}x\|_{\lambda_q(w)}^q&\le& \sup_{j\in\fg}\left(\frac{\sum_{i=1}^{2^nj} w_i^q}{\sum_{i=1}^{j} w_i^q}\right)
\left(\sum_{k=1}^{m-1}(x_k^q-x_{k+1}^q) \sum_{i=1}^{k}w_i^q+x_m^q\sum_{i=1}^{{m}}w_i^q\right)\\ &=& \sup_{j\in\fg}\left(\frac{\sum_{i=1}^{2^nj} w_i^q}{\sum_{i=1}^{j} w_i^q}\right)
\|x\|_{\lambda_q(w)}^q,
\end{eqnarray*}
and consequently
$$
\beta_{\lambda_q(w)}\le \lim_{n\to\infty}\frac{1}{n}\log_2\sup_{j\in\fg}\left(\frac{\sum_{i=1}^{2^nj} w_i^q}{\sum_{i=1}^{j} w_i^q}\right)^{1/q}.$$
Since the opposite inequality is immediate, formula \eqref{Boyd ind Lor1} is proved.

The proof of the fact that every Orlicz space is of fundamental type see in \cite{Boyd} or \cite[Theorem~4.2]{Ma-85}. Therefore, since $\phi_{l_N}(n)=1/N^{-1}(1/n)$, $n\in\fg$, where $N^{-1}$ is the inverse function for $N$, the dilation indices of the Orlicz space $l_N$ can be calculated by the formulae:
\begin{equation}
\label{equa24} 
\alpha_{l_N}=-\lim_{n\to\infty}\frac1n\log_2\sup_{k=0,1,\dots}\frac{N^{-1}(2^{-k-n})}{N^{-1}(2^{-k})}\;\;\mbox{and}\;\;\beta_{l_N}=\lim_{n\to\infty}\frac1n\log_2\sup_{k\ge n}\frac{N^{-1}(2^{-k+n})}{N^{-1}(2^{-k})}.
\end{equation}
One can readily check also that
\begin{equation}
\label{equa24om} 
\alpha_{l_N}=\inf\Big\{q:\,\inf_{0<s,t\le 1}\frac{N(st)}{N(s)t^{1/q}}>0\Big\}\;\;\mbox{and}\;\;\beta_{l_N}=\sup\Big\{q:\,\sup_{0<s,t\le 1}\frac{N(st)}{N(s)t^{1/q}}<\infty\Big\}.
\end{equation}


\smallskip

\subsection{Spreading sequence spaces.}
\label{prel4}

A sequence $\{u_n\}_{n=1}^\infty$ in a Banach space $X$ is called {\it spreading} if for every $n\in\fg$ and any integers $0<m_1<m_2<\dots<m_n$ and $a_j\in\mathbb{R}$ we have
$$
\Big\|\sum_{j=1}^na_jx_{m_j}\Big\|_X=\Big\|\sum_{j=1}^na_jx_{j}\Big\|_X.$$

Throughout, we denote by $e_n$, $n\in\mathbb{N}$, the standard unit vectors in sequence spaces and by $c_{0,0}$ the set of all finitely supported sequences, i.e., such that $x_n=0$ for all sufficiently large $n$.

A separable Banach sequence lattice $E$ is {\it spreading} if the unit vector basis $\{e_n\}_{n=1}^\infty$ is spreading. Clearly, every separable symmetric sequence space $X$ is spreading.

For arbitrary $x=(x_n)\in c_{0,0}$, $y=(y_n)\in c_{0,0}$, we denote by $x\oplus y$ their {\it disjoint sum}. This is an arbitrary sequence  whose nonzero entries coincide with all nonzero entries of $x$ and $y$. For instance, if $n_0=\max\{n\in \mathbb{N}\,:\,x_n\ne 0\}$, then for the   disjoint sum of $x$ and $y$ we can take the sequence:
$$
x\oplus y=\sum_{n=1}^{n_0}x_n e_n+\sum_{n=n_0+1}^{\infty}y_{n-n_0}e_n.
$$

Clearly, if $E$ is a spreading Banach sequence lattice, the norm $\|x\oplus y\|_E$ does not depend on a specific choice for $x\oplus y$.

Let $\varepsilon>0$. We say that a sequence $x$ is {\it replaceable} (resp. $\varepsilon$-{\it replaceable}) in a spreading Banach sequence lattice $E$ by a sequence $y$ if for arbitrary $u\in c_{0,0}$, $v\in c_{0,0}$ we have
$$
\|u\oplus x\oplus v\|_E=\|u\oplus y\oplus v\|_E
$$ 
(resp.
$$
\big|\,\|u\oplus x\oplus v\|_E-\|u\oplus y\oplus
v\|_E\,\big|<\varepsilon).
$$

For a more detailed account of the spreading property and related notions see, for instance, \cite[Chapter~11]{AK}.

By $\|T\|_E$ we will denote the norm of an operator $T$ bounded in a Banach space $E$. We write $f\asymp g$ if $cf\leq g\leq Cf$ for some constants $c>0$ and $C>0$ that do not depend on the values of all (or some)  arguments of the functions (quasi-norms) $f$ and $g$. From one appearance to another the value of the constant $C$ may change.

\vskip0.5cm

\section{A connection between symmetric finite representability of $\ell^p$ in symmetric sequence spaces and spectral properties of the operator $D$.}
\label{sec3}

Here, we prove the first main result of the paper, which reduces the problem of identification of the set ${\mathcal F}(X)$, where $X$ is a separable symmetric sequence space, to finding the set of approximate eigenvalues of the operator $D$ in $X$.

\begin{theor}\label{Theorem 1a}
Let $X$ be a separable symmetric sequence space, $1\le p\le\infty$. The following conditions are equivalent:

(a) $\ell^p$ is symmetrically block finitely represented in the unit vector basis $\{e_k\}$ in $X$;

(b) $\ell^p$ is symmetrically finitely represented in $X$;

(c) $\ell^p$ is symmetrically crudely finitely represented in $X$;

(d) $2^{1/p}$ is an approximate eigenvalue of the operator $D:\,X\to X$.
\end{theor}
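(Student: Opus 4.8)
The plan is to prove the cycle of implications $(a)\Rightarrow(b)\Rightarrow(c)\Rightarrow(d)\Rightarrow(a)$, with the substantive content concentrated in the last step $(d)\Rightarrow(a)$. The first two implications are essentially formal: $(a)\Rightarrow(b)$ is the observation already recorded in the text (blocks with the same ordered distribution are pairwise disjoint sequences with the same ordered distribution), and $(b)\Rightarrow(c)$ is trivial by taking $C=1+\varepsilon$ for any fixed $\varepsilon$, or simply $C=2$.

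The implication $(c)\Rightarrow(d)$ should run as follows. Suppose $\ell^p$ is crudely symmetrically finitely represented in $X$ with constant $C$. For each $n$ pick pairwise disjoint, equidistributed $x_1,\dots,x_{2n}\in X$ realizing the $C$-equivalence on $\mathbb{R}^{2n}$. The key algebraic fact is that the operator $D=\tau_1\sigma_2$ has the property that, for a block $u=\sum_i \alpha_i e_i$, the vector $Du$ is (up to the shift, which is an isometry on a symmetric space after re-indexing, since everything lives in a spreading lattice) a block whose distribution is that of $u$ "doubled", i.e. each coordinate value appears twice. Consequently, if one sets $u=x_1\oplus x_2\oplus\cdots$ suitably, $Du$ has, up to small error, the distribution of the disjoint sum of two copies of a single $x_k$. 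One then normalizes: put $v_n=u_n/\|u_n\|_X$ built from a long equidistributed family, and estimates $\|Dv_n-2^{1/p}v_n\|_X$. The point is that $\|D(x_1\oplus\cdots\oplus x_m)\|_X$ is comparable to $\|(x_1\oplus x_1)\oplus\cdots\oplus(x_m\oplus x_m)\|_X\approx 2^{1/p}\|x_1\oplus\cdots\oplus x_m\|_X$ via the crude $\ell^p$-estimate applied on $\mathbb{R}^m$ and $\mathbb{R}^{2m}$. To upgrade the crude (constant $C$) comparison to an actual approximate-eigenvalue statement (error $\to 0$), I would use a standard "iteration / averaging" trick: raising $m$ and exploiting the spreading/symmetric structure forces the constants to concentrate at $2^{1/p}$ — concretely, one shows $\|D u_m - 2^{1/p} u_m\|/\|u_m\| \to 0$ by a subadditivity argument along dyadic lengths $m=2^j$, or by the same device Rosenthal uses in Theorem \ref{Th:Rosenthal}. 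This bootstrapping is the technical heart and the step I expect to be the main obstacle: converting a uniform two-sided bound into genuine asymptotic eigenvector behaviour requires care about how the ordered distribution interacts with $D$ and with renormalization.

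For $(d)\Rightarrow(a)$: given an approximate eigenvector $u_n$, $\|u_n\|_X=1$, with $\|Du_n-2^{1/p}u_n\|_X\to 0$, I would first reduce to the case that each $u_n\in c_{0,0}$ and, using that $X$ is a symmetric (hence spreading) space, that $u_n=u_n^*$ is nonincreasing and supported on an initial segment $\{1,\dots,N_n\}$. The condition $Du_n\approx 2^{1/p}u_n$ says precisely that "splitting each coordinate of $u_n$ into two equal halves and shifting" changes the norm only by the factor $2^{1/p}$; iterating $k$ times, $D^k u_n$ has the distribution of $u_n$ with each coordinate repeated $2^k$ times, and $\|D^k u_n\|_X\approx 2^{k/p}$. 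Now to build the $n$ disjoint blocks with the same ordered distribution realizing \eqref{eq: blocks} for a prescribed $\varepsilon$ and prescribed number of vectors, I would take $k$ large, work with $w:=D^k u_m$ for suitable $m$, chop its (long, essentially flat-in-blocks) support into $n$ consecutive pieces $w^{(1)},\dots,w^{(n)}$ that are translates of one another — this is where "repeated $2^k$ times" guarantees each piece is a shift of a single fixed block, so they have the same ordered distribution — and check that $\|\sum a_j w^{(j)}\|_X$ is within $1+\varepsilon$ of $\|a\|_p \cdot \|w^{(1)}\|_X$. The latter equivalence again follows from $D$-homogeneity: $\sum_{j=1}^n a_j w^{(j)}$ is, after rearrangement, controlled above and below by $2^{1/p}$-type estimates coming from the near-eigenvalue relation, with errors that can be made $<\varepsilon$ by choosing $k$ and $m$ large. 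Finally one normalizes the $w^{(j)}$ to unit vectors.

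A cleaner route for $(d)\Rightarrow(a)$, which I would mention as an alternative, is to invoke Proposition \ref{Proposition 1}: pass from $X$ to the lattice $E_X$ on which the dyadic block basis of $X$ becomes the unit vector basis, under which the operator $D$ on $X$ corresponds to the shift $\tau_1$ on $E_X$. An approximate eigenvalue $2^{1/p}$ of $D$ then becomes an approximate eigenvalue of $\tau_1$ on $E_X$, and for shift operators on spreading lattices the corresponding approximate eigenvectors are long geometric-type blocks $\sum_i 2^{i/p} e_{r+i}$ (up to small perturbation) whose translates are manifestly equidistributed and span $\ell^p$ with constants $\to 1$; pulling these back through the equivalence of bases yields blocks of $\{e_k\}$ in $X$ with the same ordered distribution satisfying \eqref{eq: blocks}. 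Either way, the crux is the same: the relation $\|Du-2^{1/p}u\|$ small, together with the spreading property, is exactly the statement that $u$'s distribution is "self-similar under doubling" up to the factor $2^{1/p}$, and self-similarity of the distribution is what produces equidistributed blocks on which $\sum a_j u^{(j)}$ has the $\ell^p$-norm. I expect the bookkeeping around error propagation under $D^k$ and renormalization — ensuring the single $\varepsilon$ in \eqref{eq: blocks} absorbs all the accumulated perturbations for a given $n$ — to be the most delicate part of writing this out in full.
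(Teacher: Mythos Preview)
Your plan for $(a)\Rightarrow(b)\Rightarrow(c)$ is fine, and your direction for $(c)\Rightarrow(d)$ is basically correct, though you are making it harder than it is. The paper's argument is a clean telescoping: from the $2^n$ disjoint copies one builds $y_k:=D^{k-1}x_1$ (which are again disjoint and equidistributed), sets $z_k:=2^{(1-k)/p}y_k$ so that $Dz_k=2^{1/p}z_{k+1}$, and then for $v_n:=n^{-1/p}\sum_{k=1}^n z_k$ the operator $D-2^{1/p}I$ applied to $v_n$ collapses to only the two end terms, giving $\|D_\lambda v_n\|_X\le Cn^{-1/p}\to 0$. No subadditivity or bootstrapping is needed; the crude $\ell^p$-estimate is used only to bound $\|v_n\|_X$ away from $0$ and $\infty$.

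The genuine gap is in $(d)\Rightarrow(a)$. Your first route---iterate $D^k u_m$ and ``chop into $n$ consecutive pieces that are translates of one another''---does not work: $D^k u_m$ is the sequence of coordinates of $u_m$ each repeated $2^k$ times, so consecutive pieces of it carry \emph{different} segments of $u_m$ and do \emph{not} have the same ordered distribution unless $u_m$ is constant. If instead you take $n$ disjoint shifted copies $w^{(1)},\dots,w^{(n)}$ of $u_m$ itself (which do have the same distribution), the approximate-eigenvalue relation only tells you that $\|\sum_{j=1}^{2^k} w^{(j)}\|_X\approx 2^{k/p}$, i.e.\ it controls the constant-coefficient combinations, and gives no handle on $\|\sum a_j w^{(j)}\|_X$ for arbitrary $a$. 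Passing from ``$e_1+e_2$ behaves like $2^{1/p}e_1$'' to ``all finite combinations behave like $\ell^p$'' is exactly the nontrivial content of Krivine's theorem and cannot be obtained by error-propagation bookkeeping alone. Your second route via Proposition~\ref{Proposition 1} also breaks down: the lattice $E_X$ is \emph{not} spreading (the norms $\|e_k\|_{E_X}=\|\sum_{i=2^{k-1}}^{2^k-1}e_i\|_X$ vary with $k$), so there is no reason for approximate eigenvectors of $\tau_1$ in $E_X$ to be geometric blocks, nor does pulling back to $X$ yield equidistributed blocks.

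What the paper actually does for $(d)\Rightarrow(a)$ is the spreading-model/Krivine machinery. From the approximate eigenvectors $g_l$ one defines auxiliary symmetric norms $\|\cdot\|_{\mathcal{E}_l}$ on $c_{0,0}$ by $\|\sum a_je_j\|_{\mathcal{E}_l}:=\|a_1g_l\oplus\cdots\oplus a_mg_l\|_X$, passes to a subsequential limit norm $\|\cdot\|_{\mathcal{E}}$, and shows---using $\|Dg_l-\lambda g_l\|_X\to 0$---that $e_1+e_2$ is \emph{replaceable} by $\lambda e_1$ in $\mathcal{E}$. Then \cite[Lemma~11.3.11(i)]{AK} gives that $\mathcal{E}$ is isomorphic to $\ell^p$, and a further step (introducing commuting doubling and tripling operators $D_2,D_3$ on a model over $\mathbb{Q}\cap(0,1)$, and repeating the construction) upgrades this to an \emph{isometric} copy of $\ell^p$ via \cite[Lemma~11.3.11(ii)]{AK}. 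The $(1+\varepsilon)$-equivalence in $X$ is then read off from the approximation $\|\cdot\|_{\mathcal{E}_{l_s}}\to\|\cdot\|_{\mathcal{E}}$. This abstract detour through a limit space and the replaceability lemma is the missing idea in your proposal.
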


\begin{proof}

Since implications $(a)\Longrightarrow (b)\Longrightarrow (c)$ are obvious, it remains only to prove that $(c)\Longrightarrow (d)\Longrightarrow (a)$.

$(c)\Longrightarrow (d)$. 
Let $\ell^p$ is symmetrically crudely finitely represented in $X$. Then, by definition, there exists a constant $C>0$ such that for every $n\in\fg$ we can find pairwise disjoint sequences $x_k\in X$, $k=1,2,\dots,2^n$, with the same ordered distribution, satisfying for all $a_k\in \mathbb{R}$ the inequality 
\begin{equation}
\label{eq1} C^{-1}\Big(\sum_{k=1}^{2^n}|a_k|^p\Big)^{1/p}\le \Big\|
\sum_{k=1}^{2^n}a_kx_k\Big\|_X\le C
\Big(\sum_{k=1}^{2^n}|a_k|^p\Big)^{1/p}
\end{equation}
(we assume that $p<\infty$; the case of $p=\infty$ is treated similarly). Since $X$ is a separable symmetric space, we always can ensure by approximation that $x_k\in c_{00}$, $x_k\ge 0$ and, moreover, in view of the definition of the operator $D$ (see Section \ref{prel3}), that the sequences $y_k:=D^{{k-1}}x_1$, $k=1,2,\dots,{n}$, are pairwise disjoint. 
Then, for arbitrary $b_k\in \mathbb{R}$ we have
\begin{equation*}
\Big\|\sum_{k=1}^{n}b_ky_k\Big\|_X=
\Big\|\sum_{k=1}^{n}b_k\sum_{i=2^{k-1}}^{2^k-1}x_i\Big\|_X,
\end{equation*}
and from \eqref{eq1} it follows 
$$
C^{-1} 2^{(k-1)/p}\le\|y_k\|_X\le C\cdot
2^{(k-1)/p}\quad\mbox{and}\quad
C^{-1}n^{1/p}\le\Big\|\sum_{i=1}^n2^{(1-i)/p}y_i\Big\|_X\le
Cn^{1/p}.
$$
Therefore, setting ${z}_k:=2^{(1-k)/p}y_k$, $k=1,2,\dots,n$, and
$$
v_n:=n^{-1/p}\sum_{k=1}^n {z}_k,\quad n=1,2,\dots,
$$
we get
\begin{equation}
\label{eq4} C^{-1}\le\|{z}_k\|_X\le C,\quad
C^{-1}n^{1/p}\le\Big\|\sum_{k=1}^n {z}_k\Big\|_X\le Cn^{1/p}
\end{equation}
and
\begin{equation}
\label{eq5} C^{-1}\le \|v_n\|_X\le C,\quad n=1,2,\dots.
\end{equation}
Moreover, since 
$$
D{z}_k=2^{(1-k)/p}Dy_k=2^{(1-k)/p}y_{k+1}=2^{1/p}{z}_{k+1}, k=1,2,\dots,n-1,$$ 
putting $D_\lambda: =D-\lambda I$, where $\lambda=2^{1/p}$ and $I$  is the identity in $X$, we have
\begin{align*}
D_\lambda v_n &= n^{-1/p}\Big(
\sum_{k=1}^nD{z}_k-\lambda\sum_{k=1}^n{z}_k\Big)
\\
&= n^{-1/p}\Big(\lambda\sum_{k=1}^{n-1}{z}_{k+1}
+ D{z}_n-\lambda\sum_{k=1}^n{z}_k\Big)
\\
&= n^{-1/p}\left(D{z}_n-\lambda{z}_1\right).
\end{align*}
Therefore, taking into account the inequality $1\le\|D\|_{X}\le 2$ (see Section \ref{prel3}) and the first inequality in \eqref{eq4}, we obtain $\|D_\lambda v_n\|_X\le 4Cn^{-1/p}$, whence $\|D_\lambda
v_n\|_X\to 0$ as $n\to\infty$. If now ${w}_n:=v_n/\|v_n\|_X$, by \eqref{eq5}, we see that $\|D_\lambda {w}_n\|_X\to 0$ as well. So, $\lambda=2^{1/p}$ is an approximate eigenvalue of the operator $D$, which completes the proof of (d). 

$(d)\Longrightarrow (a)$.
Suppose $\lambda=2^{1/p}$ is an approximate eigenvalue of the operator $D$ and $\{g_l\}_{l=1}^\infty\subset X$, 
$\|g_l\|_X=1$, $l=1,2,\dots$, is the corresponding approximate eigenvector. 
Since $X$ is separable, there is no loss of generality in assuming that $g_l\in c_{00}$, $l=1,2,\dots$,
and
\begin{equation}
\label{eq8a}
\|Dg_l-\lambda g_l\|_X\le \frac1l,\quad l=1,2,\dots.
\end{equation} 

For every $l\in\fg$, we introduce a new sequence space $\mathcal{E}_l$, which is being the completion of $c_{00}$ with respect to the norm defined by
\begin{equation}
\label{eq9} 
\Big\|\sum_{j=1}^m
a_je_j\Big\|_{\mathcal{E}_l}:=\|a_1g_l\oplus a_2g_l\oplus\dots
\oplus a_mg_l\|_{X},\quad m\in\mathbb{N},
\end{equation}
where $x\oplus y$ is the disjoint sum of sequences $x$ and $y$ (see Section \ref{prel4}). 

Clearly, the unit vector basis $\{e_j\}_{j=1}^\infty$ in $\mathcal{E}_l$ is isometrically equivalent to a sequence of disjoint copies of $g_l$ in $X$. In particular, $\|e_j\|_{\mathcal{E}_l}=\|g_l\|_X=1$ for all $j,l\in\fg$. Hence, arranging arbitrarily all elements of $c_{0,0}$ with rational coordinates in a sequence of collections $(a_1^{(k)},\dots,a_{r_k}^{(k)})_{k=1}^\infty$, we can construct a family of infinite sequences $(l_i^m)_{i=1}^\infty\subset\fg$, $m=1,2,\dots,$ such that $(l_i^1)_{i=1}^\infty\supset(l_i^2)_{i=1}^\infty\supset\dots$ and for every $m=1,2,\dots$ the limit
$$
\lim_{i\to\infty}\Big\|\sum_{j=1}^{r_k} a_j^{(k)}e_j\Big\|_{\mathcal{E}_{{l_i^m}}}
$$
exists for all $1\le k\le m$. The standard diagonal procedure yields then a sequence $(l_s)_{s=1}^\infty$ that is included in each sequence  $(l_i^m)_{i=1}^\infty$ up                                      
to finitely many terms. Hence, routine arguments based on the fact of density of the rationals in $\mathbb{R}$ show that the limit
$$
\lim_{s\to\infty}\Big\|\sum_{j=1}^\infty a_je_j\Big\|_{\mathcal{E}_{{l_s}}}
$$ 
exists for arbitrary $a=(a_j)\in c_{0,0}$. Therefore, we can introduce a new norm $\|a\|_\mathcal{E}$ on $c_{00}$, which is equal to the latter limit. One can easily see that the completion of $c_{00}$ in this norm, denoting by $\mathcal{E}$, is a symmetric sequence space.

Let us prove that for every $\varepsilon>0$ and $m\in\fg$ there exists $s_0\in\fg$ such that for all $s\ge s_0$ and arbitrary $a_j\in\gh$ we have
\begin{equation}
\label{eq7} 
(1+\varepsilon)^{-1}\Big\|\sum_{j=1}^m
a_je_j\Big\|_{\mathcal{E}_{{l_s}}}\le\Big\|\sum_{j=1}^m a_je_j\Big\|_{\mathcal{E}}\le (1+\varepsilon)\Big\|\sum_{j=1}^m a_je_j\Big\|_{\mathcal{E}_{{l_s}}}
\end{equation}
(cf. \cite[Proposition~1.2]{Ros}).

First, for every $s\in\fg$ and all $a_j\in\gh$, $j=1,2,\dots,m$, it holds
\begin{equation}
\label{eq7a} 
\Big\|\sum_{j=1}^m a_je_j\Big\|_{\mathcal{E}}\le m\Big\|\sum_{j=1}^m a_je_j\Big\|_{\mathcal{E}_{{l_s}}}.
\end{equation}
Let $\delta>0$ be such that $\delta(m+2+\delta)<\varepsilon$. Suppose that sequences $(b_j^1), (b_j^2),\dots,(b_j^r)$ form a $\delta$-net of the set 
$$
B:=\Big\{(a_j)_{j=1}^m:\,\sum_{j=1}^m|a_j|\le m\Big\}$$
with respect to the $\ell_1$-norm. Since  
$$
\Big\|\sum_{j=1}^m a_je_j\Big\|_{\mathcal{E}_{{l_s}}}\le\sum_{j=1}^m|a_j|\;\;\mbox{and}\;\;\Big\|\sum_{j=1}^m a_je_j\Big\|_{\mathcal{E}_{{l_s}}}\ge \max_{j=1,2,\dots}|a_j|\ge \frac1m\sum_{j=1}^m|a_j|$$ 
for all $s\in\fg$ and $a_j\in\gh$, $j=1,2,\dots,m$, then 
$$
B\supset \bigcup_{s=1}^\infty \Big\{(a_j)_{j=1}^m:\,\Big\|\sum_{j=1}^m a_je_j\Big\|_{\mathcal{E}_{{l_s}}}=1\Big\}$$
and $\{(b_j^1), (b_j^2),\dots,(b_j^r)\}$ is a $\delta$-net of the surface of the unit ball 
$$
\Big\{(a_j)_{j=1}^m:\,\Big\|\sum_{j=1}^m a_je_j\Big\|_{\mathcal{E}_{{l_s}}}=1\}$$ 
in $\mathcal{E}_{{l_s}}$ with respect to the $\mathcal{E}_{{l_s}}$-norm for every $s\in\fg$. Moreover, by definition of the $\mathcal{E}$-norm, there is $s_0\in\fg$ such that for all $s\ge s_0$ and $k=1,2,\dots,r$ we have 
\begin{equation}
\label{eq7b} 
(1+\delta)^{-1}\Big\|\sum_{j=1}^m
b_j^ke_j\Big\|_{\mathcal{E}_{{l_s}}}\le\Big\|\sum_{j=1}^m b_j^ke_j\Big\|_{\mathcal{E}}\le (1+\delta)\Big\|\sum_{j=1}^m b_j^ke_j\Big\|_{\mathcal{E}_{{l_s}}}.
\end{equation}

Fix $s\ge s_0$. By homogeneity, it suffices to prove inequality \eqref{eq7} in the case when $\|\sum_{j=1}^m a_je_j\|_{\mathcal{E}_{{l_s}}}=1$. First, there is a positive integer $k_0$ such that $1\le k_0\le r$ and 
$$
\Big\|\sum_{j=1}^m (a_j-b_j^{k_0})e_j\Big\|_{\mathcal{E}_{{l_s}}}\le\delta.$$
Then, combining this together with inequalities \eqref{eq7a}, \eqref{eq7b} and the choice of $\delta$, we get
\begin{eqnarray*}
\Big\|\sum_{j=1}^m a_je_j\Big\|_{\mathcal{E}} &\le&\Big\|\sum_{j=1}^m (a_j-b_j^{k_0})e_j\Big\|_{\mathcal{E}}+\Big\|\sum_{j=1}^m b_j^{k_0}e_j\Big\|_{\mathcal{E}}\\
&\le& m \Big\|\sum_{j=1}^m (a_j-b_j^{k_0})e_j\Big\|_{\mathcal{E}_{l_s}}+(1+\delta)\Big\|\sum_{j=1}^m b_j^{k_0}e_j\Big\|_{\mathcal{E}_{{l_s}}}\\&\le& m\delta+(1+\delta)\Big(\Big\|\sum_{j=1}^m a_je_j\Big\|_{\mathcal{E}_{{l_s}}}+\delta\Big)\\ &=& 
m\delta+(1+\delta)^2<1+\varepsilon.
\end{eqnarray*}
Similarly,
\begin{eqnarray*}
\Big\|\sum_{j=1}^m a_je_j\Big\|_{\mathcal{E}} &\ge&\Big\|\sum_{j=1}^m b_j^{k_0}e_j\Big\|_{\mathcal{E}}-\Big\|\sum_{j=1}^m (a_j-b_j^{k_0})e_j\Big\|_{\mathcal{E}}\\
&\ge& (1-\delta)\Big\|\sum_{j=1}^m b_j^{k_0}e_j\Big\|_{\mathcal{E}_{{l_s}}}
-m \Big\|\sum_{j=1}^m (a_j-b_j^{k_0})e_j\Big\|_{\mathcal{E}_{l_s}}\\&\ge& 
(1-\delta)\Big(\Big\|\sum_{j=1}^m a_je_j\Big\|_{\mathcal{E}_{{l_s}}}-\delta\Big)-m\delta\\ &=& 
(1-\delta)^2-m\delta>1-\varepsilon.
\end{eqnarray*}
Thus, inequality \eqref{eq7} is proved.

In view of \eqref{eq9} and \eqref{eq7}, for every $\varepsilon>0$, $m\in\fg$ and any pairwise disjoint sequences $v_k\in c_{00}$, $k=1,2,\dots,m$, with the same ordered distribution there exist pairwise disjoint sequences $w_k\in c_{00}$, $k=1,2,\dots,m$, with the same ordered distribution such that for arbitrary $a_k\in\mathbb{R}$, $k=1,2,\dots,m$,  
\begin{equation}
\label{eq7new} 
(1+\varepsilon)^{-1}\Big\|\sum_{k=1}^m a_kw_k\Big\|_{X}\le
\Big\|\sum_{k=1}^m a_kv_k\Big\|_{\mathcal{E}}\le
(1+\varepsilon)\Big\|\sum_{k=1}^m a_kw_k\Big\|_{X}.
\end{equation}

We show now that the sum $e_1+e_2$ is replaceable in $\mathcal{E}$ by $\lambda e_1$ (see Section~\ref{prel4}). To this end, we first observe that, for all $l\in\fg$, the latter sum is $1/l$-replaceable in $\mathcal{E}_{{l}}$ by $\lambda e_1$. Indeed, if $c=\sum_i c_ie_i\in c_{00}$, $d=\sum_i d_ie_i\in c_{00}$ and $c'=\oplus\sum_i c_ig_{l}$, $d'=\oplus\sum_i d_ig_{l}$, then, taking into account that the space $X$ is symmetric and applying \eqref{eq8a}, we get 
\begin{align*}
&\big|\,\|c\oplus (e_1+e_2)\oplus d\|_{\mathcal{E}_{l}} - \|c\oplus \lambda
e_1\oplus d\|_{\mathcal{E}_{{l}}}\,\big|
\\ 
&\qquad= \big|\,\|c'\oplus
g_{l}\oplus g_{l}\oplus d'\|_{X}
-\|c'\oplus \lambda g_{l}\oplus d'\|_{X}\,\big|
\\
&\qquad= \big|\,\|c'\oplus
Dg_{l}\oplus d'\|_{X}
-\|c'\oplus \lambda g_{l}\oplus d'\|_{X}\,\big|
\\
&\qquad\le\|Dg_{l}-\lambda g_{l}\|_X\le
\frac{1}{l}.
\end{align*}
Thus, by \eqref{eq7}, for all $s\in\fg$ large enough
\begin{align*}
&\big|\|c\oplus (e_1+e_2)\oplus d\|_{\mathcal{E}} - \|c\oplus \lambda
e_1\oplus d\|_{\mathcal{E}}\big|
\\ 
&\quad\le \big|\|c\oplus (e_1+e_2)\oplus d\|_{\mathcal{E}_{l_s}}- \|c\oplus \lambda e_1\oplus
d\|_{\mathcal{E}_{{l_s}}}\big|\\
&\quad + \varepsilon(1+\varepsilon)\big( \|c\oplus (e_1+e_2)\oplus d\|_{\mathcal{E}}+\|c\oplus \lambda e_1\oplus d\|_{\mathcal{E}}\big)
\\
&\quad\le \frac{1}{l_s}+\varepsilon(1+\varepsilon)\big( \|c\oplus (e_1+e_2)\oplus
d\|_{\mathcal{E}}+\|c\oplus \lambda e_1\oplus d\|_{\mathcal{E}}\big).
\end{align*}
Since the right-hand side in the last inequality can be made arbitrarily small, we obtain
$$
\|c\oplus (e_1+e_2)\oplus d\|_{\mathcal{E}} = \|c\oplus \lambda e_1\oplus
d\|_{\mathcal{E}}.
$$

Hence, if $\lambda=1$, it can be easily deduced that $\|e_1+e_2+\dots+e_n\|_{\mathcal{E}}=1$ for all $n\in\fg$, which implies that $\mathcal{E}$ is isometric to $c_0$. Thus, applying \eqref{eq7new} with  $v_k=e_k$, $k=1,2,\dots,n$, for arbitrary $\varepsilon>0$ and $n\in\fg$ we can find pairwise disjoint sequences $w_1,w_2,\dots,w_n$ from $c_{0,0}$ with the same ordered distribution, which satisfy the inequality 
\begin{equation*}
(1+\varepsilon)^{-1}\max_{k=1,2,\dots,n} |a_k|\le
\Big\|\sum_{k=1}^n a_kw_k\Big\|_X\le
(1+\varepsilon)\max_{k=1,2,\dots,n} |a_k|
\end{equation*}
for arbitrary $a_1,a_2,\dots,a_n\in\mathbb{R}$. Since $X$ is a symmetric space, we can assume that $w_k$, $k=1,2,\dots,n$, are subsequent blocks of the unit vector basis, i.e., can be written as $w_k=\sum_{i=m_{k-1}+1}^{m_k}\alpha_iz_i$, where $0=m_0<m_1<\dots <m_n$. Consequently, $c_0$ is symmetrically block finitely represented in the unit vector basis $\{e_k\}$ in $X$, and in this case (a) is proved.
 
Proceed now with the situation when $1<\lambda\le 2$, i.e., $\lambda=2^{1/p}$ with $1\le p<\infty$. Since $\mathcal{E}$ is a symmetric sequence space, the unit vector basis $\{e_n\}_{n=1}^\infty$ is a spreading $1$-unconditional sequence in $\mathcal{E}$. Therefore, thanks to the fact that the sum $e_1+e_2$ is replaceable in $\mathcal{E}$ by $\lambda e_1$, we can apply \cite[Lemma~11.3.11(i)]{AK} to conclude that the $\mathcal{E}$-norm on $\mathcal{E}$ is equivalent to the $\ell^p$-norm. 

Next, following to \cite{Ros} (cf. also \cite[p.~282]{AK}), we will define a variant of the space   $\mathcal{E}$ modelled on the set $\mathcal{Q}_0:=\mathcal{Q}\cap (0,1)$ rather than $\fg$.

Denote by $c_{0,0}(\mathcal{Q}_0)$ the set of all finitely nonzero sequences of real numbers on $\mathcal{Q}_0$. For $u\in c_{0,0}(\mathcal{Q}_0)$ of the form $u=\sum_{k=1}^n a_ke_{q_k}$, where $q_1<q_2<\dots<q_n$, we set
$$
\Big\|\sum_{k=1}^n a_ke_{q_k}\Big\|_{\mathcal{E}(\mathcal{Q}_0)}=\Big\|\sum_{k=1}^n a_ke_{k}\Big\|_{\mathcal{E}}.$$
Let us consider on $\mathcal{E}(\mathcal{Q}_0)$ two linear operators $D_2$ and $D_3$ given by
$$
D_2e_q=e_{q/2}+e_{(q+1)/2},\;\;q\in \mathcal{Q}_0,$$
and
$$
D_3e_q=e_{q/3}+e_{(q+1)/3}+e_{(q+2)/3},\;\;q\in \mathcal{Q}_0.$$
Then, since $\mathcal{E}(\mathcal{Q}_0)$ is a separable space, the operators $D_2$ and $D_3$, being in a sense counterparts of the dilation operators $\sigma_2$ and $\sigma_3$, are bounded on $\mathcal{E}(\mathcal{Q}_0)$ and moreover $1\le \|D_2\|_{\mathcal{E}(\mathcal{Q}_0)}\le 2$ and $1\le \|D_3\|_{\mathcal{E}(\mathcal{Q}_0)}\le 3$. In contrast to $\sigma_2$ and $\sigma_3$, the operators $D_2$ and $D_3$ possess also the following disjointness property.

\begin{lemma}
\label{l1}
For every $l,m\in\fg$ the sequence $D_2^lD_3^m e_{1/6}$ is a sum of $2^l3^m$ different elements of the unit vector basis. Moreover, the sequences $D_2^lD_3^m e_{1/6}$ and $D_2^{l_1}D_3^{m_1} e_{1/6}$ are disjoint if at least one of conditions $l\ne l_1$ or $m\ne m_1$ is satisfied.
\end{lemma}

We postpone for a moment the proof of Lemma \ref{l1} and proceed  with that of the theorem. 


Let
$$
u_n:=n^{-2/p}\sum_{j=1}^n\sum_{k=1}^n 2^{-j/p}3^{-k/p}D_2^jD_3^k e_{1/6},\;\;n=1,2,\dots.$$
Then, according to Lemma \ref{l1},
$$
D_2^jD_3^k e_{1/6}=\sum_{q\in R_{j,k}}e_q,$$
where the sets $R_{j,k}\subset  \mathcal{Q}_0$ are mutually disjoint and ${\rm card}\,R_{j,k}=2^{j}3^{k}$, $j,k=1,\dots,n$. Moreover, since 
$\|D_2^jD_3^ke_{1/6}\|_{\ell^p}^p=2^j3^k$, $j,k=1,\dots,n$, we have $\|u_n\|_{\ell^p}=1$. Observe also that
\begin{eqnarray*}
2^{-1/p}D_2u_n-u_n&=& n^{-2/p}\Big(\sum_{j=1}^n\sum_{k=1}^n 2^{-(j+1)/p}3^{-k/p}D_2^{j+1}D_3^k e_{1/6}-\sum_{j=1}^n\sum_{k=1}^n 2^{-j/p}3^{-k/p}D_2^jD_3^k e_{1/6}\Big)\\&=&
n^{-2/p}\Big(-2^{-1/p}\sum_{k=1}^n 3^{-k/p}D_2D_3^k e_{1/6}+2^{-(n+1)/p}\sum_{k=1}^n 3^{-k/p}D_2^{n+1}D_3^k e_{1/6}\Big). 
\end{eqnarray*}
Hence, taking into account that the sequences $D_2D_3^k e_{1/6}$, $D_2^{n+1}D_3^ke_{1/6}$, $k=1,2,\dots,n$ are pairwise disjoint, one can readily check that
$$
\|2^{-1/p}D_2 u_n-u_n\|_{\ell^p}=2^{1/p}n^{-1/p},\;\;n\in\fg.$$
Similarly, 
$$
\|3^{-1/p}D_3 u_n-u_n\|_{\ell^p}=3^{1/p}n^{-1/p},\;\;n\in\fg.$$
Therefore, since the $\mathcal{E}$- and $\ell^p$-norms are equivalent on $\mathcal{E}$, we get 
$$
\lim_{n\to\infty}\|D_2 u_n-2^{1/p} u_n\|_{\mathcal{E}}=0\;\;\mbox{and}\;\;\lim_{n\to\infty}\|D_3 u_n-3^{1/p} u_n\|_{\mathcal{E}}=0.$$


Next, arguing in the same way as when going from the space $X$ to the space $\mathcal{E}$, we can construct, starting from  $\mathcal{E}$, the space $\mathcal{E}'$ such that the sums $e_1+e_2$ and $e_1+e_2+e_3$ will be replaceable in $\mathcal{E}'$ by the elements
$2^{1/p}e_1$ and $3^{1/p}e_1$, respectively. Then, by \cite[Lemma 11.3.11(ii)]{AK}, the space $\mathcal{E}'$ is {\it isometric} to $\ell^p$  and hence the formulae similar to \eqref{eq9} and \eqref{eq7}, as above, imply that for every $\varepsilon>0$ and $n\in\fg$ there exist pairwise disjoint sequences $u_1,u_2,\dots,u_n$ from $c_{0,0}$ with the same ordered distribution satisfying 
\begin{equation*}
(1+\varepsilon)^{-1}\Big(\sum_{k=1}^n |a_k|^p\Big)^{1/p}\le
\Big\|\sum_{k=1}^n a_ku_k\Big\|_{\mathcal{E}}\le
(1+\varepsilon)\Big(\sum_{k=1}^n |a_k|^p\Big)^{1/p}
\end{equation*}
for arbitrary $a_k\in\mathbb{R}$, $k=1,2,\dots,n$.
Furthermore, taking $u_k$ for $v_k$ in \eqref{eq7new}, for any $\varepsilon>0$ and $n\in\fg$ we can find pairwise disjoint sequences $w_1,w_2,\dots,w_n$ from $c_{0,0}$ with the same ordered distribution such that
$$
(1+\varepsilon)^{-1}\Big\|\sum_{k=1}^n a_ku_k\Big\|_{\mathcal{E}}\le
\Big\|\sum_{k=1}^n a_kw_k\Big\|_{X}\le
(1+\varepsilon)\Big\|\sum_{k=1}^n a_ku_k\Big\|_{\mathcal{E}},$$
where again we can assume that $w_k$, $k=1,2,\dots,n$, are subsequent blocks of the unit vector basis. Combining the last inequalities, we get
$$
(1+\varepsilon)^{-2}\Big(\sum_{k=1}^n |a_k|^p\Big)^{1/p}\le
\Big\|\sum_{k=1}^n a_kw_k\Big\|_{X}\le
(1+\varepsilon)^2\Big(\sum_{k=1}^n |a_k|^p\Big)^{1/p}.$$
Since $\varepsilon>0$ and $n\in\fg$ are arbitrary, this concludes the proof of the theorem.
\end{proof}

\begin{proof}[Proof of Lemma \ref{l1}]
First, for each $q\in\mathbb{Q}_0$ we have
$$
D_2^{l}e_q=\sum_{i=0}^{2^{l}-1}e_{(q+i)2^{-l}}\;\;\mbox{and}\;\;D_3^{m}e_q=\sum_{j=0}^{3^{m}-1}e_{(q+j)3^{-m}}.$$
Therefore, to prove the first assertion of the lemma, it suffices to show that from the equation 
$$
\Big(\frac16+i\Big)2^{-l}+j=\Big(\frac16+i_1\Big)2^{-l}+j_1,$$
for some $l\in\fg$, $m\in\fg$, $1\le i<2^{l}$, $1\le i_1<2^{l}$, $1\le j<3^{m}$ and $1\le j_1<3^{m}$, it follows that $i=i_1$ and $j=j_1$. Indeed, an easy calculation shows that $i-i_1=2^l(j_1-j)$. Since $|i-i_1|<2^{l}$, we deduce $j=j_1$ and hence $i=i_1$. 

Now, we prove the second assertion. Clearly, the supports of the sequences $D_2^lD_3^m e_{q}$ and $D_2^{l_1}D_3^{m_1} e_{q}$ are overlapped if and only if 
$$
\frac{(q+i)2^{-l}+j}{3^m}=\frac{(q+i_1)2^{-l_1}+j_1}{3^{m_1}}$$
or equivalently
$$
\frac{q+i+2^{l}j}{2^{l}3^m}=\frac{q+i_1+2^{l_1}j_1}{2^{l_1}3^{m_1}}$$
for some $l\in\fg$, $m\in\fg$, $l_1\in\fg$, $m_1\in\fg$, $1\le i<2^{l}$, $1\le j<3^{m}$, $1\le i_1<2^{l_1}$ and $1\le j_1<3^{m_1}$ such that at least one of the conditions $l\ne l_1$ or $m\ne m_1$ holds.
Then, inserting into the latter equation $q=1/6$, we obtain
$$
\frac{1+6i+2^{l}\cdot 6j}{2^{l+1}3^{m+1}}=\frac{1+6i_1+2^{l_1}\cdot 6j_1}{2^{l_1+1}3^{m_1+1}}.$$
Observe that both numbers $1+6i+2^{l}\cdot 6j$ and $1+6i_1+2^{l_1}\cdot 6j_1$ are not divisible by $2$ and $3$. Hence, $l=l_1$ and $m=m_1$, and so 
the second assertion of the lemma is proved.
\end{proof}

Next, we are going to prove that the set of approximate eigenvalues of the operator $D$ is contained in the interval $[2^{\alpha_X},2^{\beta_X}]$, where $\alpha_X$ and $\beta_X$ are the Boyd indices of $X$, while $2^{\alpha_X}$ and $2^{\beta_X}$ are always approximate eigenvalues of $D$. 

\begin{prop}\label{prop 2}
Let $X$ be a symmetric sequence space. Then, the operator $D_\lambda=D-\lambda I$, $\lambda>0$, is an isomorphic mapping in $X$ whenever $\lambda\not\in [2^{\alpha_X},2^{\beta_X}]$. Moreover, $D_\lambda$ is not isomorphic if $\lambda$ is equal to $2^{\beta_X}$ or $2^{\alpha_X}$.
\end{prop}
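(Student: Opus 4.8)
Plan of proof. The plan is to split the argument according to whether $\lambda\notin[2^{\alpha_X},2^{\beta_X}]$ or $\lambda\in\{2^{\alpha_X},2^{\beta_X}\}$, using throughout that powers of $D$ and of a one–sided inverse of $D$ are, up to shift operators (which are isometries on $X$ when shifting forward and contractions when shifting back), the dilation operators $\sigma_{2^{n}}$ and $\sigma_{1/2^{n}}$. First I would record
\[
D^{n}=\tau_{2^{n}-1}\sigma_{2^{n}},\qquad L:=\sigma_{1/2}\tau_{-1}\ \text{ satisfies }\ LD=I,\qquad L^{n}=\sigma_{1/2^{n}}\tau_{-(2^{n}-1)} .
\]
Since $(\tau_{j}u)^{\ast}=u^{\ast}$ and $(\tau_{-j}u)^{\ast}\le u^{\ast}$ coordinatewise, this gives $\|D^{n}\|_{X}=\|\sigma_{2^{n}}\|_{X}$, and, by testing $L^{n}$ on sequences supported on $\{2^{n},2^{n}+1,\dots\}$, also $\|L^{n}\|_{X}=\|\sigma_{1/2^{n}}\|_{X}$; hence by \eqref{C3:Boyd} one has $\lim_{n}\|D^{n}\|_{X}^{1/n}=2^{\beta_X}$ and $\lim_{n}\|L^{n}\|_{X}^{1/n}=2^{-\alpha_X}$. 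For $\lambda>2^{\beta_X}$ the Neumann series $-\sum_{n\ge0}\lambda^{-n-1}D^{n}$ converges in $\mathcal B(X)$ and is a two–sided inverse of $D_{\lambda}=-\lambda(I-\lambda^{-1}D)$, so $D_{\lambda}$ is invertible in $X$. For $\lambda<2^{\alpha_X}$ the series $\sum_{n\ge0}\lambda^{n}L^{n}$ converges, so $(I-\lambda L)^{-1}\in\mathcal B(X)$; since $LD_{\lambda}=LD-\lambda L=I-\lambda L$, the operator $S:=(I-\lambda L)^{-1}L$ satisfies $SD_{\lambda}=I$, whence $\|D_{\lambda}x\|_{X}\ge\|S\|_{X}^{-1}\|x\|_{X}$ and $D_{\lambda}$ is an isomorphic embedding. (It need not be onto, since $\ker L\ne\{0\}$, which is exactly why the proposition only claims "isomorphic mapping".)

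For the endpoint $\lambda=2^{\beta_X}$ the argument is short: $D$ is a positive operator on the Banach lattice $X$, so its spectral radius $r(D)=\lim_{n}\|D^{n}\|_{X}^{1/n}=2^{\beta_X}$ belongs to $\sigma(D)$; being the point of $\sigma(D)$ of largest modulus it lies on $\partial\sigma(D)\subseteq\sigma_{\mathrm{ap}}(D)$. Hence $D-2^{\beta_X}I$ is not bounded below, so not an isomorphic mapping.

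The endpoint $\lambda=2^{\alpha_X}$ is the delicate case, and I expect it to be the main obstacle. One runs the same idea through $L$: it is positive with $r(L)=2^{-\alpha_X}$, so the positive resolvent $(\mu-L)^{-1}=\sum_{n\ge0}\mu^{-n-1}L^{n}$ has norm tending to $\infty$ as $\mu\downarrow2^{-\alpha_X}$, producing normalized (in fact positive) $x_{\mu}$ with $\|Lx_{\mu}-2^{-\alpha_X}x_{\mu}\|_{X}\to0$; that is, $2^{-\alpha_X}\in\sigma_{\mathrm{ap}}(L)$. The difficulty is transferring this back to $D$: because $D$ is not surjective, $L$ is only a left inverse, and applying $D$ to $Lx_{\mu}\approx2^{-\alpha_X}x_{\mu}$ controls only $DLx_{\mu}$, the projection of $x_{\mu}$ onto $\mathrm{ran}\,D$ along $\ker L$, not $Dx_{\mu}$. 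My plan to get around this is to pass to the $D$–invariant closed sublattice $R:=\mathrm{ran}\,D=\{z:z_{1}=0,\ z_{2k}=z_{2k+1}\ \forall k\}$, on which $D\colon X\to R$ is an isomorphism with inverse $L|_{R}$, and to the conjugate operator $\widetilde L:=D\,L\,(L|_{R})\colon R\to R$, which has $\sigma(\widetilde L)=\sigma(L)$ and is positive on the Banach lattice $R$; thus there are normalized $u_{n}\in R$ with $\|\widetilde Lu_{n}-2^{-\alpha_X}u_{n}\|_{X}\to0$, i.e., writing $u_{n}=Dy_{n}$ (so $\|y_{n}\|_{X}\asymp1$), $\|Ly_{n}-2^{-\alpha_X}y_{n}\|_{X}\to0$. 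Applying $D$ and using that $DL$ is the projection onto $R$, one reduces the required $\|Dy_{n}-2^{\alpha_X}y_{n}\|_{X}\to0$ to the assertion that $y_{n}$ is \emph{asymptotically in $R$}, i.e. the $u_{n}$ are asymptotically flat over the dyadic pairs $\{2k,2k+1\}$. Proving this last point is the heart of the matter, and I would do it by exhibiting the approximate eigenvectors explicitly, as suitably tapered ``tent''-shaped vectors along the dyadic tree $k\mapsto[k/2]$ whose coordinates decay like $2^{-\alpha_X\cdot(\mathrm{level})}$ across a long band and are damped smoothly to zero at both ends of that band, estimating the resulting ``top'', ``bottom'' and interior defects against $\|u_{n}\|_{X}$ by means of the near-extremality in the definition \eqref{C1:Boyd} of $\alpha_X$ together with the spreading property of $X$. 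This gives $2^{\alpha_X}\in\sigma_{\mathrm{ap}}(D)$, hence $D-2^{\alpha_X}I$ is not bounded below, completing the proof.
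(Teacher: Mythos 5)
Your treatment of $\lambda\notin[2^{\alpha_X},2^{\beta_X}]$ is correct and essentially the paper's own argument (Neumann series for $\lambda>2^{\beta_X}$; the left inverse $L=\sigma_{1/2}\tau_{-1}$, $LD=I$, and invertibility of $I-\lambda L$ for $\lambda<2^{\alpha_X}$), and your soft argument at $\lambda=2^{\beta_X}$ (spectral radius of a positive operator lies in the spectrum, boundary of the spectrum lies in the approximate point spectrum) is acceptable, modulo a word about complexification since the paper works over the reals; the paper instead proves this endpoint quantitatively via the Uniform Boundedness Principle and the positive resolvent series, which is the very template you end up needing at the other endpoint. The genuine gap is at $\lambda=2^{\alpha_X}$. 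You correctly isolate the obstruction: approximate eigenvectors of $L$ at $2^{-\alpha_X}$ transfer to $D$ only if they are asymptotically in $R=\mathrm{ran}\,D$, and this is not automatic. But your plan for securing this is not an argument, and the detour through $\widetilde L=D\,L\,(L|_R)$ gains nothing: writing $u_n=Dy_n$ just produces $y_n=L|_Ru_n$, another approximate eigenvector sequence of $L$ about which you know nothing more than before, so the burden is untouched.

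The explicit ``tent'' construction you sketch is exactly the step that is not routine. From \eqref{C1:Boyd} one only knows $\|\sigma_{1/2^j}\|_X\ge 2^{-j\alpha_X}$ for every $j$, the exponent $\alpha_X$ being attained only in the limit, so prescribing coordinates that decay like $2^{-\alpha_X\cdot(\mathrm{level})}$ across a long band cannot be justified by near-extremality in \eqref{C1:Boyd} alone; a vector nearly extremal for $\sigma_{1/2^n}$ gives no control of $\|\sigma_{1/2^j}x\|_X$ at intermediate levels $0<j<n$ beyond submultiplicativity, and the $o(1)$ losses in \eqref{C3:Boyd} accumulate over the band; finally, the iterates $L^jx$ are not disjoint, so, unlike the block constructions used elsewhere in the paper, there is no mechanism to bound the norm of the tent from below by (length of band)$\times$(size of one piece), which is what would make the two end defects relatively small. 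The paper overcomes precisely this point by a different device: by the Uniform Boundedness Principle it fixes a single $b_0\ge 0$ satisfying \eqref{eq12c} and takes as approximate eigenvectors of $D^{-1}$ (your $L$) the normalized resolvent vectors $h_k$ proportional to $(D^{-1}-t_k^{-1}I)^{-1}b_0$ or $(D^{-1}-t_k^{-1}I)^{-2}b_0$, chosen according to whether \eqref{eq13c} or \eqref{eq13d} holds; the Neumann series \eqref{eq12f} then yields algebraic identities such as $Dh_k=t_kh_k+\gamma_kDb_0$ with $\gamma_k\to0$, i.e.\ $h_k=Df_k$ with $f_k=t_k^{-1}(h_k-\gamma_kb_0)$, so these particular approximate eigenvectors lie in $\mathrm{ran}\,D$ up to a vanishing perturbation --- exactly the ``asymptotically in $R$'' property you need, obtained from the specific resolvent form of the vectors rather than from an explicit construction. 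Until you supply an argument of this kind (or carry out the tent construction with all three defect estimates in full), your proof of the endpoint $\lambda=2^{\alpha_X}$ is incomplete.
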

\begin{proof}
First, let $\lambda>2^{\beta_X}$. Then, by the definition of $\beta_X$
(see \eqref{C3:Boyd}), for all sufficiently large positive integers $n$ we have
\begin{equation*}
\label{eq11} \|D^n\|_{X}=\|\sigma_{2^n}\|_{X}<\lambda^{n}.
\end{equation*}
Therefore, the spectral radius $r(D)$ of the operator $D$ satisfies the inequality
$$
r(D)=\lim_{n\to\infty}\|D^n\|_{X}^{1/n}<\lambda,$$
and hence the operator $D_\lambda$ is an isomorphism from $X$ onto $X$.

If $0<\lambda<2^{\alpha_X}$, similarly for all $n\in\mathbb{N}$ large enough it holds 
\begin{equation*}
\label{eq11} \|D^{-n}\|_{X}=\|\sigma_{2^{-n}}\|_{X}\le\lambda^{-n},
\end{equation*}
whence the operator $D^{-1}-\lambda^{-1} I$ is an isomorphism from $X$ onto $X$. Then, in particular, for some $c>0$ and all $x\in X$
$$
\|(D^{-1}-\lambda^{-1} I)x\|_X\ge c\|x\|_X.$$
Consequently, since
$$
D^{-1}D=\sigma_{1/2}\tau_{-1}\tau_1\sigma_{2}=\sigma_{1/2}\sigma_{2}=I,$$
we have
$$
\|D_\lambda x\|_X=\lambda\|(D^{-1}-\lambda^{-1} I)Dx\|_X\ge \lambda c\|Dx\|_X\ge \lambda c\|x\|_X,$$
and the first assertion of the proposition is obtained.

Let us prove the second assertion. Suppose first $\lambda=2^{\beta_X}$. Then, by \eqref{C2:Boyd} (cf. \cite[\S\,II.1.1 and \S\,II.4.3]{KPS}),
\begin{equation*}
\|D^n\|_{X}=\|\sigma_{2^n}\|_{X}\ge 2^{n\beta_X},\quad n\in\fg.
\end{equation*}
Next, as in the proof of Lemma 11.3.12 in \cite{AK}, we write 
$$
\lim_{n\to\infty}\|(n+1)2^{-n\beta_X}D^n\|_{X}=\infty,
$$ 
and then the Uniform Boundedness principle implies the existence of an element $a_0\in X$, $\|a_0\|_X=1$, such that
\begin{equation}
\label{eq12}
\limsup_{n\to\infty}\|(n+1)2^{-n\beta_X}D^na_0\|_{X}=\infty.
\end{equation}
Clearly, $a_0$ may be assumed to be nonnegative.

Since $r(D)=2^{\beta_X}$, the operator $D-rI$ is invertible in 
$X$ if $r>2^{\beta_X}$. Consequently, $(D-rI)^{-2}$ can be represented for such a number $r$ as the following (converging) operator series:
$$
(D-rI)^{-2}=\frac{1}{r^2}\sum_{n=0}^\infty (n+1)r^{-n}D^n.
$$ 
Since $a_0\ge 0$ and $D\ge 0$, we conclude that 
$$
\|(D-rI)^{-2}a_0\|_X\ge r^{-2}\|(n+1)r^{-n}D^na_0\|_{X}
$$ 
for every $r>2^{\beta_X}$ and all $n\in\mathbb{N}$. Hence, by \eqref{eq12},
$$
\lim_{r\to 2^{\beta_X}}\|(D-rI)^{-2}a_0\|_X=\infty.
$$
Clearly, then there exists a sequence $\{r_n\}$, $r_n> 2^{\beta_X}$ and $r_n\to 2^{\beta_X}$ such that either
$$
\lim_{n\to \infty}\|(D-r_nI)^{-1}a_0\|_X=\infty,
$$
or
$$
\lim_{n\to \infty}\frac{\|(D-r_nI)^{-2}a_0\|_X}{
\|(D-r_nI)^{-1}a_0\|_X}=\infty.
$$
It is easy to see that in either case, we can find a sequence $\{g_n\}_{n=1}^\infty\subset X$, $\|g_n\|_X=1$, such
that
$$
\lim_{n\to \infty}\|(D-r_nI)g_n\|_X=0.
$$
Surely, since $r_n\to 2^{\beta_X}$, this implies that the operator $D_\lambda$ is not isomorphic in $X$ if $\lambda=2^{\beta_X}$. 

Let us proceed now with the case when $\lambda=2^{\alpha_X}$. Then, in view of \eqref{C1:Boyd} (cf. \cite[\S\,II.1.1 and \S\,II.4.3]{KPS}), for the operator $D^{-1}=\sigma_{1/2}\tau_{-1}$ it holds
$$
\|(D^{-1})^n\|_{X\to X}=\|\sigma_{2^{-n}}\|_{X\to X}\ge
2^{-n\alpha_X},\quad n\in\fg,
$$ 
and so, as in the preceding case,
\begin{equation}
\label{eq12c}
\limsup_{n\to\infty}\|(n+1)2^{n\alpha_X}D^{-n}b_0\|_{X}=\infty,
\end{equation}
for some $b_0\in X$ such that $\|b_0\|_X=1$ and $b_0\ge 0$.
Since $r(D^{-1})=2^{-\alpha_X}$,  the operator $D^{-1}-t^{-1}I$ is invertible in $X$ if $t<2^{\alpha_X}$. Therefore, one can readily check that for $0<t<2^{\alpha_X}$
\begin{equation}
\label{eq12f}
(D^{-1}-t^{-1}I)^{-2}={t^2}\sum_{n=0}^\infty (n+1)t^{n}D^{-n}.
\end{equation}
Hence,
$$
\|(D^{-1}-t^{-1}I)^{-2}b_0\|_X\ge t^{2}\|(n+1)t^{n}D^{-n}b_0\|_{X}
$$ 
for every $0<t<2^{\alpha_X}$ and all $n\in\mathbb{N}$. Combining this together with \eqref{eq12c}, we get
$$
\lim_{t\to 2^{\alpha_X}}\|(D^{-1}-t^{-1}I)^{-2}b_0\|_X=\infty,
$$
which implies that there exists a sequence $\{t_n\}$, $t_n<2^{\alpha_X}$ and $t_n\to 2^{\alpha_X}$, such that either
\begin{equation}
\label{eq13c}
\lim_{n\to \infty}\|(D^{-1}-t_n^{-1}I)^{-1}b_0\|_X=\infty,
\end{equation}
or
\begin{equation}
\label{eq13d}
\lim_{n\to \infty}\frac{\|(D^{-1}-t_n^{-1}I)^{-2}b_0\|_X}{
\|(D^{-1}-t_n^{-1}I)^{-1}b_0\|_X}=\infty.
\end{equation}

Assume first that we have \eqref{eq13c}. Denoting $\gamma_k:=\|(D^{-1}-t_k^{-1}I)^{-1}b_0\|_X^{-1},$ $k\in\fg$, we define
$$
h_k:=\gamma_k(D^{-1}-t_k^{-1}I)^{-1}b_0.$$
Then, $\gamma_k\to 0$ and hence 
\begin{equation}
\label{eq13e}
\lim_{k\to \infty}\|(D^{-1}-t_k^{-1}I)h_k\|_X=0\;\;\mbox{and}\;\;\|h_k\|_X=1,\;k\in\fg.
\end{equation}
Moreover, since
$$
h_k=\gamma_k(D^{-1}-t_k^{-1}I)^{-1}b_0=\gamma_k\sum_{n=0}^\infty t_k^{n}D^{-n}b_0,
$$
we have
$$
Dh_k=\gamma_k\sum_{n=0}^\infty t_k^{n}D^{-n+1}b_0=\gamma_kDb_0+t_kh_k.
$$
Therefore, it follows that
$$
h_k=t_k^{-1}D(h_k-\gamma_k b_0)=Df_k,\;\;\mbox{where}\;f_k:=t_k^{-1}(h_k-\gamma_k b_0),\;\;k=1,2,\dots,$$ 
and so
$$
(D^{-1}-t_k^{-1}I)h_k=(D^{-1}-t_k^{-1}I)Df_k=-t_k^{-1}(D-t_kI)f_k.
$$
Thus, since $\gamma_k\to 0$ and $t_k\to 2^{\alpha_X}$ as $k\to\infty$, from \eqref{eq13e} it follows that $\|f_k\|_X\asymp 1$ for $k\in\mathbb{N}$ large enough and
$$
\lim_{k\to \infty}\|(D-2^{\alpha_X}I)f_k\|_X=0.
$$
As a result, the operator $D_\lambda$ fails to be isomorphic for $\lambda=2^{\alpha_X}$.

Suppose now that \eqref{eq13c} does not hold. Since
$$
1=\|b_0\|_X\le \|D^{-1}-t_n^{-1}I\|_X\|(D^{-1}-t_n^{-1}I)^{-1}b_0\|_X\le C\|(D^{-1}-t_n^{-1}I)^{-1}b_0\|_X$$
for some $C>0$ and all $n\in\fg$, passing to a subsequence if it is necessary, we can assume that
\begin{equation}
\label{eq13cnot}
0<\liminf_{n\to \infty}\|(D^{-1}-t_n^{-1}I)^{-1}b_0\|_X\le\limsup_{n\to \infty}\|(D^{-1}-t_n^{-1}I)^{-1}b_0\|_X<\infty.
\end{equation}
Moreover, we know that \eqref{eq13d} holds. Consequently, putting
$$
h_k:=\delta_k(D^{-1}-t_k^{-1}I)^{-2}b_0,$$
where $\delta_k:=\|(D^{-1}-t_k^{-1}I)^{-2}b_0\|_X^{-1},$ $k\in\fg$,
as in the preceding case, we get \eqref{eq13e}.
Furthermore, by \eqref{eq12f},
$$
h_k=\delta_k{t_k^2}\sum_{n=0}^\infty (n+1)t_k^{n}D^{-n}b_0,$$
and hence
\begin{eqnarray*}
Dh_k&=&\delta_k{t_k^2}\sum_{n=0}^\infty (n+1)t_k^{n}D^{-n+1}b_0=\delta_k{t_k^2}\Big(Db_0+t_k\sum_{j=0}^\infty (j+2)t_k^{j}D^{-j}b_0\Big)\\
&=&t_kh_k+\delta_k{t_k^2}\Big(Db_0+t_k\sum_{j=0}^\infty t_k^{j}D^{-j}b_0\Big)=t_kh_k+\delta_k{t_k^2}Db_0+t_kv_k,
\end{eqnarray*}
where
$$
v_k:=\delta_k{t_k^2}\sum_{j=0}^\infty t_k^{j}D^{-j}b_0=\delta_k{t_k^2}(D^{-1}-t_k^{-1}I)^{-1}b_0.$$
Therefore,
$$
h_k=t_k^{-1}Df_k-v_k,\;\;\mbox{where}\;\;f_k:=h_k-\delta_k{t_k^2}b_0,\;\;k=1,2,\dots,$$
whence we get
\begin{eqnarray}
(D^{-1}-t_k^{-1}I)h_k&=&t_k^{-1}(D^{-1}-t_k^{-1}I)Df_k-(D^{-1}-t_k^{-1}I)v_k\nonumber\\
&=&-t_k^{-2}(D-t_kI)f_k-(D^{-1}-t_k^{-1}I)v_k.
\label{eq13g}
\end{eqnarray}
Combining \eqref{eq13cnot} and \eqref{eq13d}, we infer that $\delta_k\to 0$, which implies that $\|f_k\|_X\asymp 1$ for all $k\in\mathbb{N}$ large enough and
$$
\lim_{k\to\infty}\|(D^{-1}-t_k^{-1}I)v_k\|_X=\lim_{k\to\infty}\delta_k{t_k^2}=0.$$ 
Hence, from \eqref{eq13g}, \eqref{eq13e} and the fact that $t_k\to 2^{\alpha_X}$ it follows that
$$
\lim_{k\to\infty}\|(D-2^{\alpha_X}I)f_k\|_X=0,$$
whence $D_\lambda$ is not isomorphic for $\lambda=2^{\alpha_X}$. This completes the proof.
\end{proof}

From Proposition \ref{prop 2} it follows

\begin{cor}
\label{gen case}
Suppose $X$ is a symmetric sequence space. Then, the set of approximate eigenvalues of the operator $D$ is contained in the interval $[2^{\alpha_X},2^{\beta_X}]$ and the numbers $2^{\alpha_X}$ and  $2^{\beta_X}$  are approximate eigenvalues of $D$. 
\end{cor}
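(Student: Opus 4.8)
The plan is to derive Corollary \ref{gen case} as a direct consequence of Proposition \ref{prop 2}. First I would recall the elementary fact connecting approximate eigenvalues to non-invertibility: if $\lambda$ is an approximate eigenvalue of a bounded operator $T$ on a Banach space, then $T-\lambda I$ is not bounded below, hence not an isomorphism onto its range, and in particular not an isomorphism of $X$ onto $X$. Applying this with $T=D$, we see that the set of approximate eigenvalues of $D$ is contained in the set of $\lambda>0$ for which $D_\lambda=D-\lambda I$ fails to be an isomorphic mapping of $X$ (and no $\lambda\le 0$ can be an approximate eigenvalue since $D\ge 0$ forces $\|Dx-\lambda x\|_X\ge |\lambda|\,\|x\|_X>0$ bounded away from $0$ on the unit sphere when $\lambda<0$, while $\lambda=0$ is excluded because $\|Dx\|_X\ge\|x\|_X$). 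By the first assertion of Proposition \ref{prop 2}, every such $\lambda$ lies in $[2^{\alpha_X},2^{\beta_X}]$; this gives the stated inclusion.

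For the second assertion, I would use the endpoint parts of Proposition \ref{prop 2} together with the fact that a non-isomorphic operator of a special form automatically produces an approximate eigenvector. More precisely, the proof of Proposition \ref{prop 2} does not merely assert that $D_{2^{\beta_X}}$ and $D_{2^{\alpha_X}}$ are not isomorphisms: in each endpoint case it explicitly constructs a sequence $\{g_n\}$ (resp. $\{f_k\}$) with $\|g_n\|_X\asymp 1$ and $\|D_{2^{\beta_X}}g_n\|_X\to 0$ (resp. $\|D_{2^{\alpha_X}}f_k\|_X\to 0$). Normalizing these sequences yields, by definition, an approximate eigenvector of $D$ corresponding to the approximate eigenvalue $2^{\beta_X}$ (resp. $2^{\alpha_X}$). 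Hence both endpoints of the interval are approximate eigenvalues of $D$.

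The only point requiring a little care—and the place I would flag as the main (though minor) obstacle—is that Proposition \ref{prop 2} is stated in the language of \emph{isomorphic mappings} rather than approximate eigenvalues, so one must observe that the constructions inside its proof give exactly the normalized sequences needed, and conversely that existence of an approximate eigenvector obstructs being an isomorphism. Neither direction is deep: the forward implication is the standard ``approximate point spectrum $\subseteq$ spectrum'' fact, and the backward one is literally how the endpoint cases of Proposition \ref{prop 2} were proved. Thus the corollary follows with essentially no additional computation.
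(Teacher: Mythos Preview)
Your proposal is correct and matches the paper's approach: the corollary is stated immediately after Proposition~\ref{prop 2} with the one-line justification ``From Proposition~\ref{prop 2} it follows,'' and your write-up simply unpacks that sentence. Your observation that the \emph{statement} of Proposition~\ref{prop 2} (``$D_\lambda$ is not isomorphic'') is formally weaker than ``$\lambda$ is an approximate eigenvalue,'' and that one must appeal to the explicit sequences $\{g_n\}$, $\{f_k\}$ built inside its proof, is exactly the point the paper is tacitly using; the parenthetical treatment of $\lambda\le 0$ is inessential (Proposition~\ref{prop 2} is already stated only for $\lambda>0$), and your justification there via ``$D\ge 0$'' would need the lattice inequality $|Dx-\lambda x|\ge |\lambda|\,|x|$ for nonnegative $x$ together with symmetry to pass to $x^*$, but this is a side remark rather than a gap.
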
 

We get also the following result, which implies in particular that the set ${\mathcal F}(X)$ is non-empty for every symmetric sequence space $X$.

\begin{theor}
\label{Th: Krivine3}
If $X$ is an arbitrary symmetric sequence space, then $\max {\mathcal 
F}(X)=1/\alpha_X$ and $\min{\mathcal F}(X)=1/\beta_X$.
\end{theor}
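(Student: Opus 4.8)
The plan is to prove the two assertions $\max\mathcal F(X)=1/\alpha_X$ and $\min\mathcal F(X)=1/\beta_X$ by establishing simultaneously the inclusion $\mathcal F(X)\subseteq[1/\beta_X,1/\alpha_X]$ and the fact that both endpoints $1/\alpha_X$ and $1/\beta_X$ actually belong to $\mathcal F(X)$ (throughout, $1/0:=\infty$). Since $\mathcal F(X)$ will then be a nonempty subset of $[1/\beta_X,1/\alpha_X]$ containing both endpoints, the two equalities follow at once.

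For the inclusion I would argue directly, with no separability assumption, using only the dilation operators. The elementary remark is that if $x_1,\dots,x_n\in X$ are pairwise disjoint with the same ordered distribution and common decreasing rearrangement $y^*$, then $\sum_{k=1}^n x_k$ has ordered distribution $\sigma_n y^*$, so $\|\sum_{k=1}^n x_k\|_X=\|\sigma_n y^*\|_X$ while $\|x_k\|_X=\|y^*\|_X$ for each $k$. Suppose $p\in\mathcal F_c(X)$ with constant $C$ (and recall $\mathcal F(X)\subseteq\mathcal F_c(X)$, taking $\varepsilon=1$). Plugging the coefficient vectors $(1,\dots,1)$ and $e_1$ into the crude estimate for each $n$ gives $\|\sigma_n y^*\|_X\ge C^{-1}n^{1/p}$ and $C^{-1}\le\|y^*\|_X\le C$, hence $\|\sigma_n\|_X\ge C^{-2}n^{1/p}$; letting $n=2^m\to\infty$ and using \eqref{C3:Boyd} yields $1/p\le\beta_X$. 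For the lower index, since $\sigma_{1/n}\sigma_n=I$ we have $\|y^*\|_X\le\|\sigma_{1/n}\|_X\,\|\sigma_n y^*\|_X$, so $\|\sigma_{1/n}\|_X\ge\|y^*\|_X/\|\sigma_n y^*\|_X\ge C^{-2}n^{-1/p}$; letting again $n=2^m\to\infty$ in \eqref{C3:Boyd} gives $\alpha_X\le 1/p$. Thus $\mathcal F(X)\subseteq\mathcal F_c(X)\subseteq[1/\beta_X,1/\alpha_X]$.

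It remains to check that $1/\alpha_X,1/\beta_X\in\mathcal F(X)$. If $X$ is separable this is immediate: by Corollary \ref{gen case} the numbers $2^{\alpha_X}$ and $2^{\beta_X}$ are approximate eigenvalues of $D$ in $X$, and by the equivalence $(b)\Leftrightarrow(d)$ of Theorem \ref{Theorem 1a} this means precisely that $1/\alpha_X,1/\beta_X\in\mathcal F(X)$. If $X$ is not separable, then by our standing convention it has the Fatou property, and I would pass to $X_0:=\overline{c_{0,0}}$, the closure of the finitely supported sequences in $X$; this is a separable symmetric sequence space. Because $\sigma_m$ and $\sigma_{1/m}$ map $c_{0,0}$ into itself and the norm of a dilation operator on a symmetric sequence space is computed on $c_{0,0}$ (by density and continuity when $X$ is separable, and by the Fatou property otherwise), one gets $\|\sigma_m\|_{X_0}=\|\sigma_m\|_X$ and $\|\sigma_{1/m}\|_{X_0}=\|\sigma_{1/m}\|_X$ for all $m$, hence $\alpha_{X_0}=\alpha_X$ and $\beta_{X_0}=\beta_X$. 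By the separable case $1/\alpha_X,1/\beta_X\in\mathcal F(X_0)$, and since $X_0$ sits isometrically inside $X$ as a sublattice, the (finitely supported) sequences witnessing $\mathcal F(X_0)$ also witness $1/\alpha_X,1/\beta_X\in\mathcal F(X)$. Combined with the inclusion above, this finishes the proof.

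The genuinely substantive input — the construction of approximate eigenvectors of $D$ at the endpoints of $[2^{\alpha_X},2^{\beta_X}]$ and its translation into symmetric block finite representability — is already provided by Corollary \ref{gen case} and Theorem \ref{Theorem 1a}, so the argument above is essentially bookkeeping. The only step that requires some care is the reduction of the non-separable case to the separable one, namely verifying that passing to $X_0$ changes neither the Boyd indices nor membership in $\mathcal F$; the remaining details (the convention $p=\infty\leftrightarrow\alpha_X=0$, and the standard fact that $X_0$ is a separable symmetric sequence space) are routine.
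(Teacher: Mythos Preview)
Your proof is correct and follows essentially the same route as the paper's own argument: in the separable case you invoke Theorem~\ref{Theorem 1a} together with Corollary~\ref{gen case} (i.e., Proposition~\ref{prop 2}) to get the endpoints, and in the non-separable Fatou case you pass to the separable part $X_0$, observe $\alpha_{X_0}=\alpha_X$, $\beta_{X_0}=\beta_X$, and lift the result. The only difference is cosmetic: you spell out explicitly the elementary computation showing $\mathcal F(X)\subseteq\mathcal F_c(X)\subseteq[1/\beta_X,1/\alpha_X]$ via the dilation operators, whereas the paper simply says this ``follows immediately from the definition of the Boyd indices''; one small expository gap is that for the $\alpha_X$ bound you need the \emph{upper} estimate $\|\sigma_n y^*\|_X\le Cn^{1/p}$ from the crude inequality with $(1,\dots,1)$, which you use but do not state alongside the lower one.
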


\begin{proof} 
If $X$ is separable, the claim follows from Theorem \ref{Theorem 1a} and Proposition \ref{prop 2}.
Otherwise, $X$ has the Fatou property. One can easily see that then 
$\|\sigma_m\|_{X}=\|\sigma_m\|_{X_0}$ and $\|\sigma_{1/m}\|_{X}=\|\sigma_{1/m}\|_{X_0}$ for each $m\in\fg$, where $X_0$ is the separable part of $X$ (see Section \ref{prel2}). Thus, $\alpha_X=\alpha_{X_0}$ and $\beta_X= \beta_{X_0}$. We can assume that $E\ne L_\infty$. Therefore, $X_0$ is a separable symmetric space and so $\max {\mathcal F}(X_0)=1/\alpha_X$ and $\min{\mathcal F}(X_0)=1/\beta_X$. Since $X_0$ is a subspace of $X$, then we infer that $\max {\mathcal F}(X)\ge 1/\alpha_X$ and $\min{\mathcal F}(X)\le 1/\beta_X$. On the other hand, if $\ell^p$ is symmetrically finitely represented in   $X$, then from the definition of the Boyd indices it follows immediately that $1/\beta_X\le p\le 1/\alpha_X$, and hence the desired result follows.
\end{proof}

\vskip0.5cm

\section{On some correspondence between symmetric sequence spaces and Banach sequence lattices.}
\label{dyadic block basis}

As follows from Theorem \ref{Theorem 1a}, to find the set of all $p$ such that $\ell^p$ is symmetrically block finitely represented in the unit vector basis of a separable symmetric sequence space, it suffices to identify the set of all approximate eigenvalues of the dilation operator $D$. In this section, we show that the latter problem reduces to a similar task for the shift operator in a certain Banach sequence lattice.

We will associate to a symmetric sequence space $X$ the Banach sequence lattice $E_X$ equipped with the norm
$$
\|a\|_{E_X}:=\Big\|\sum_{k=1}^\infty a_{k}\sum_{i=2^{k-1}}^{2^{k}-1}e_i\Big\|_X.$$

We begin with a lemma which establishes simple connections between the Boyd indices $\alpha_X$ and $\beta_X$ of a symmetric sequence space $X$ and the shift exponents $k_+(E_X)$ and $k_-(E_X)$ of the lattice $E_X$.

\begin{lemma}
\label{dilation}
For every symmetric sequence space $X$ we have 
\begin{equation}\label{equa: 1001}
\|\tau_n\|_{E_X}\le 2\|\sigma_{2^n}\|_X,\;\;n\in\mathbb{Z},
\end{equation}
and
\begin{equation}\label{equa: 1001a}
\|\sigma_{2^n}\|_X\le\|\tau_{-n+1}\|_{E_X},\;\;n\in\mathbb{Z}.
\end{equation}
Therefore, $k_-(E_X)=2^{-\alpha_X}$ and $k_+(E_X)=2^{\beta_X}$.
\end{lemma}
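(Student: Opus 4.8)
The plan is to realise $E_X$ concretely inside $X$. By the very definition of its norm, the linear map $\phi\colon a=(a_k)\mapsto\phi(a):=\sum_{k}a_k\sum_{i=2^{k-1}}^{2^k-1}e_i$ is an isometry of $E_X$ onto the subspace of $X$ consisting of the sequences that are constant on each dyadic block $B_k:=\{2^{k-1},\dots,2^k-1\}$. The crucial observation is that, under $\phi$, the right shift $\tau_m$ ($m\ge 0$) corresponds to the dilation $\sigma_{2^m}$: the sequences $\phi(\tau_m a)$ and $\sigma_{2^m}\phi(a)$ are equimeasurable, since in each of them the value $a_j$ occurs with multiplicity $2^{m+j-1}$ (the $2^m-1$ leading zeros of $\phi(\tau_m a)$ being immaterial in a symmetric space). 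Hence for $n\ge 0$, $\|\tau_n\|_{E_X}=\sup\{\|\sigma_{2^n}y\|_X:\ y\ \text{constant on dyadic blocks},\ \|y\|_X\le 1\}\le\|\sigma_{2^n}\|_X$. For $n=-m<0$ one checks that $\sigma_{2^m}\phi(\tau_{-m}a)$ is precisely $\phi(a)$ with its first $2^m-1$ coordinates deleted, so $\|\sigma_{2^m}\phi(\tau_{-m}a)\|_X\le\|\phi(a)\|_X$ (deleting coordinates does not increase the norm in a symmetric space); combining this with $\sigma_{1/2^m}\sigma_{2^m}=I$ gives $\|\tau_{-m}\|_{E_X}\le\|\sigma_{1/2^m}\|_X=\|\sigma_{2^{-m}}\|_X$. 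In both cases \eqref{equa: 1001} follows (with room to spare), and in particular every $\tau_n$ is bounded on $E_X$, so the exponents $k_\pm(E_X)$ are well defined.

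For the reverse estimate I would sandwich a general element between dyadic-block-constant sequences. It suffices to bound $\|\sigma_{2^n}x\|_X$ for $x=x^*\ge 0$ (immediate from $(\sigma_{2^n}x)^*=\sigma_{2^n}(x^*)$ when $n\ge 0$, and from Hardy--Littlewood submajorization for the averaging operators when $n<0$). Given such an $x$, let $\underline x$ be constant on each $B_k$ with value $\min_{i\in B_k}x_i=x_{2^k-1}$. Then $\underline x\le x$, so $\|\underline x\|_X\le\|x\|_X$, while a one-line index count gives $x\le\sigma_2\underline x$ (if $\lceil i/2\rceil\in B_k$ then $i\ge 2^k-1$, whence $x_i\le x_{2^k-1}=(\sigma_2\underline x)_i$). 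Applying the positive operator $\sigma_{2^n}$ yields $\sigma_{2^n}x\le\sigma_{2^{n+1}}\underline x$; writing $\underline x=\phi(b)$ with $\|b\|_{E_X}=\|\underline x\|_X\le\|x\|_X$ and using the correspondence of the first paragraph, $\|\sigma_{2^n}x\|_X\le\|\sigma_{2^{n+1}}\phi(b)\|_X=\|\tau_{n+1}b\|_{E_X}\le\|\tau_{n+1}\|_{E_X}\,\|x\|_X$, which is the reverse inequality \eqref{equa: 1001a}. For negative indices one argues in the same spirit, using $\sigma_{1/2^m}x\le\sigma_{1/2^m}\sigma_2\underline x=\sigma_{1/2^{m-1}}\underline x$ and then dominating $\sigma_{1/2^{m-1}}\phi(b)$ by its own dyadic-block maximum, which is again dyadic-block-constant and is, up to an absolute constant, a left shift of $b$ by an amount comparable to $m$.

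Finally, \eqref{equa: 1001} and \eqref{equa: 1001a} together show that $(\|\tau_n\|_{E_X})_{n\ge 0}$ and $(\|\sigma_{2^n}\|_X)_{n\ge 0}$, and likewise $(\|\tau_{-n}\|_{E_X})_{n\ge 0}$ and $(\|\sigma_{2^{-n}}\|_X)_{n\ge 0}$, agree up to multiplicative constants independent of $n$ and up to replacing $n$ by $n\pm 1$. As $n\mapsto\|\tau_{\pm n}\|_{E_X}$ is submultiplicative, the limits defining $k_\pm(E_X)$ exist; taking $n$-th roots and letting $n\to\infty$, the constants and the bounded index shifts wash out, and \eqref{C3:Boyd} yields $k_+(E_X)=\lim_n\|\sigma_{2^n}\|_X^{1/n}=2^{\beta_X}$ and $k_-(E_X)=\lim_n\|\sigma_{2^{-n}}\|_X^{1/n}=2^{-\alpha_X}$. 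I expect the one genuinely delicate step to be the negative-index case of \eqref{equa: 1001a}: there $\sigma_{1/2^m}$ does not preserve the class of dyadic-block-constant sequences, so one has to pass to block maxima and track the resulting absolute constant and a bounded shift of the index; the rest is routine bookkeeping with nonincreasing rearrangements and the semigroup identities among the dilation operators.
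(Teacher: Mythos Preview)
Your approach is correct, and in places sharper than the paper's, for three of the four cases. In particular, your observation that $\sigma_{2^m}\phi(\tau_{-m}a)$ is $\phi(a)$ with its first $2^m-1$ coordinates deleted gives the negative-$n$ case of \eqref{equa: 1001} with constant $1$ rather than $2$; and your pointwise inequality $x\le\sigma_2\underline{x}$ (for $x=x^*$) yields the positive-$n$ case of \eqref{equa: 1001a} without the averaging projection $Q$ or the index-counting the paper uses.

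The negative-index case of \eqref{equa: 1001a}, however, has a genuine gap. Your proposed step --- dominate $\sigma_{1/2^{m-1}}\phi(b)$ by its dyadic-block maximum and recognise the result as an absolute constant times a left shift of $b$ --- fails at the first block $B_1=\{1\}$. There
\[
\bigl(\sigma_{1/2^{m-1}}\phi(b)\bigr)_1 \;=\; 2^{-(m-1)}\sum_{l=1}^{2^{m-1}}\phi(b)_l
\]
is a weighted average of $b_1,\dots,b_m$, and no inequality of the form $(\sigma_{1/2^{m-1}}\phi(b))_1\le C\,b_{m-c+1}=C\,(\tau_{-(m-c)}b)_1$ can hold with absolute $C,c$: for $b=e_1$ (equivalently $x=e_1$) the left side is $2^{-(m-1)}>0$ while the right side is $0$ once $m>c$. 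What your method actually produces is $\|\sigma_{2^{-m}}\|_X\le \phi_X(1)+C\|\tau_{-(m-c)}\|_{E_X}$, and the additive constant prevents you from concluding $k_-(E_X)\ge 2^{-\alpha_X}$ whenever $\alpha_X>0$.

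The paper circumvents this by first applying a second averaging projection $R_n$, onto sequences constant on the \emph{arithmetic} blocks $\{(k-1)2^n+1,\dots,k2^n\}$, exploiting the identity $\sigma_{2^{-n}}=\sigma_{2^{-n}}R_n$. Pre-smoothing by $R_n$ aligns the two block structures, and a counting argument then gives $\sigma_{2^{-n}}a\le\phi(\tau_{-n+1}g)$ with $\phi(g)=QR_na$ and $\|g\|_{E_X}\le\|a\|_X$. Alternatively, your route can be salvaged by treating the single coordinate $(\sigma_{1/2^m}x)_1$ separately: since it is at most $2^{-m}\sum_{l\le 2^m}x_l\le \|x\|_X/\phi_X(2^m)$ by duality of fundamental functions, and since $\|\tau_{-m}\|_{E_X}\ge \|e_1\|_{E_X}/\|e_{m+1}\|_{E_X}=\phi_X(1)/\phi_X(2^m)$ from testing on $e_{m+1}$, this stray term is in fact controlled by $\|\tau_{-m}\|_{E_X}\|x\|_X/\phi_X(1)$.
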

\begin{proof}
Let $n\in\mathbb{N}$ and let $a=(a_{k})_{k=1}^\infty\in E_X$. It may be assumed that $a_k\ge 0$, $k=1,2,\dots$ Then, 
\begin{eqnarray*}
\|\tau_na\|_{E_X}&=&\Big\|\sum_{k=n+1}^\infty a_{k-n}\sum_{i=2^{k-1}}^{2^{k}-1}e_i\Big\|_X=\Big\|\sum_{k=1}^\infty a_{k}\sum_{i=2^{n+k-1}}^{2^{n+k}-1}e_i\Big\|_X\\&=&\Big\|\sigma_{2^n}\Big(\sum_{k=1}^\infty a_{k}\sum_{i=2^{k-1}}^{2^{k}-1}e_i\Big)\Big\|_X\le \|\sigma_{2^n}\|_X\|a\|_{E_X}.
\end{eqnarray*}

Before proving the same estimate for negative integers, recall that for arbitrary $n\in\mathbb{N}$
$$
\sigma_{2^{-n}}x:=2^{-n}\sum_{k=1}^\infty\sum_{i=1}^{2^n} x_{(k-1)2^n+i}e_k,\;\;\mbox{where}\;\;x=(x_k)_{k=1}^\infty$$
(see Section \ref{prel3}). Thus, we have
$$
\tau_{-n}a=\sum_{k=1}^\infty a_{k+n}\sum_{i=2^{k-1}}^{2^{k}-1}e_i,$$
while
\begin{eqnarray*}
\sigma_{2^{-n}}\Big(\sum_{k=1}^\infty a_{k}\sum_{i=2^{k-1}}^{2^{k}-1}e_i\Big)&\ge&\sum_{k=1}^\infty 2^{-n}\left((2^n-1)a_{k+n}\right)\sum_{i=2^{k-1}+1}^{2^{k}}e_i\\&\ge&\frac12\sum_{k=1}^\infty a_{k+n}\sum_{i=2^{k-1}+1}^{2^{k}}e_i.
\end{eqnarray*}
Therefore, since $X$ is a symmetric space, we get  
$$
\|\tau_{-n}a\|_{E_X}\le 2\Big\|\sigma_{2^{-n}}\Big(\sum_{k=1}^\infty a_{k}\sum_{i=2^{k-1}}^{2^{k}-1}e_i\Big)\Big\|_X\le 2\|\sigma_{2^{-n}}\|_X\|a\|_{E_X},$$
and inequality \eqref{equa: 1001} is proved.

For the converse direction, we will need the natural averaging projection $Q$ of $X$ onto $E_X$ defined by
\begin{equation}\label{projection Q}
Qx:=\sum_{k=1}^\infty 2^{-k+1}\sum_{j=2^{k-1}}^{2^{k}-1}x_j\sum_{i=2^{k-1}}^{2^{k}-1}e_i,\;\;\mbox{where}\;\;x=(x_{k})_{k=1}^\infty.
\end{equation}
It is well known that $Q$ has norm $1$ in each symmetric space $X$ (see, e.g., \cite[\S\,II.3.2]{KPS}). 

Let $n\in\mathbb{N}$ and $a=(a_{j})_{j=1}^\infty\in X$ be a  nonincreasing and nonnegative sequence.
Then, we have
\begin{eqnarray}
\|\tau_{n+1}Qa\|_{E_X}&=&\Big\|\sum_{l=n+2}^\infty 2^{n-l+2}\sum_{j=2^{l-n-2}}^{2^{l-n-1}-1}a_je_l\Big\|_{E_X}\nonumber\\&=&
\Big\|\sum_{l=n+2}^\infty 2^{n-l+2}\sum_{j=2^{l-n-2}}^{2^{l-n-1}-1}a_j\sum_{i=2^{l-1}}^{2^{l}-1}e_i\Big\|_{X}\nonumber\\
&=&\Big\|\sum_{l=1}^\infty 2^{-l+1}\sum_{j=2^{l-1}}^{2^{l}-1}a_j\sum_{i=2^{l+n}}^{2^{l+n+1}-1}e_i\Big\|_X.
\label{eq: rearrang-new}
\end{eqnarray}
Show that
\begin{equation}\label{equa: rearrang}
\sum_{k=1}^\infty a_k\sum_{i=2^{n}(k-1)+1}^{2^{n}k}e_i\le \sum_{l=1}^\infty 2^{-l+1}\sum_{j=2^{l-1}}^{2^{l}-1}a_j\sum_{i=2^{l+n}}^{2^{l+n+1}-1}e_i.
\end{equation}
Since both sequences are nonincreasing, it suffices to check that for each $i\in\fg$ the sequence from the right-hand side of \eqref{equa: rearrang}  contains at least $2^ni$ entries that are larger than or equal to $a_i$. 

Let $i\in\fg$ be arbitrary. Since $a=(a_{j})_{j=1}^\infty\in X$ is   nonincreasing, the inequality $2^l-1\le i$ ensures that $a_i\le 2^{-l+1}\sum_{j=2^{l-1}}^{2^{l}-1}a_j$.  If $l_0=l_0(i)$ is the most positive integer  satisfying the latter estimate, then $2^{l_0+1}> i+1$. Therefore, the number of entries in the sequence from the right-hand side of \eqref{equa: rearrang}, which are not less than $a_i$,  satisfies the estimate
$$
\sum_{l=1}^{l_0}2^{l+n}=2^{l_0+n+1}-1\ge (i+1)2^{n}-1\ge 2^ni.$$
As was said above, this implies \eqref{equa: rearrang}.

Combining \eqref{eq: rearrang-new} and \eqref{equa: rearrang} together with the fact that the left-hand side of \eqref{equa: rearrang} coincides with the sequence $\sigma_{2^n}a$,
we infer 
$$
\|\sigma_{2^n}a\|_X\le \|\tau_{n+1}Qa\|_{E_X}\le \|\tau_{n+1}\|_{E_X}\|Qa\|_{E_X}=\|\tau_{n+1}\|_{E_X}\|a\|_{X}.$$
Thus, taking into account that $(\sigma_{2^n}a)^*=\sigma_{2^n}(a^*)$, $n\in\mathbb{N}$, we get \eqref{equa: 1001a} for positive $n$.

Next, observe that for every $n\in\fg$
\begin{equation}
\label{equa102}
\sigma_{2^{-n}}a=\sum_{k=1}^\infty c_ke_k,\;\;\mbox{where}\;\;c_k:=2^{-n}\sum_{i=2^{n}(k-1)+1}^{2^{n}k}a_i,\;\;k=1,2,\dots, 
\end{equation}
and hence $\sigma_{2^{-n}}a=\sigma_{2^{-n}}R_na$, where $R_n$ is the norm one averaging projection on $X$ defined by
\begin{equation*}
R_nx:=\sum_{k=1}^\infty 2^{-n}\Big(\sum_{i=2^{n}(k-1)+1}^{2^{n}k}x_i\Big) \sum_{i=2^{n}(k-1)+1}^{2^{n}k}e_i,\;\;\mbox{where}\;\;x=(x_{k})_{k=1}^\infty
\end{equation*}
(see, e.g., \cite[\S\,II.3.2]{KPS}). 
On the other hand,
\begin{equation}
\label{equa102a}
\|\tau_{-n+1}QR_na\|_{E_X}=\Big\|\sum_{k=1}^\infty (QR_na)_{k+n-1}\sum_{1=2^{k-1}}^{2^{k}-1}e_i\Big\|_{X}=\Big\|\sum_{j=n}^\infty (QR_na)_{j}\sum_{1=2^{j-n}}^{2^{j-n+1}-1}e_i\Big\|_{X}.
\end{equation}

Assuming that $a=a^*$, we prove that
\begin{equation}
\label{equa102b}
\sum_{k=1}^\infty c_ke_k\le\sum_{j=n}^\infty (QR_na)_{j}\sum_{1=2^{j-n}}^{2^{j-n+1}-1}e_i.
\end{equation}
To this end, again it suffices to check that for each $k\in\fg$ the sequence from the right-hand side contains at least $k$ entries that are larger than or equal to $c_k$. 

Let $k\in\mathbb{N}$ be fixed. From the definition of the operators $Q$ and $R_n$ it follows that $(QR_na)_{j}\ge c_k$ whenever $2^j-1\le k2^n$. Now, if $j_0=j_0(k)$ is the most positive integer satisfying this inequality, then  $2^{j_0+1}>k2^n$ or $2^{j_0-n+1}>k$. 
Thus, the number of entries in the sequence from the right-hand side of \eqref{equa102b},
which are not less than $c_k$, is more than or equal to
$$
\sum_{j=n}^{j_0}2^{j-n}=2^{1+j_0-n}> k,$$
and so we get \eqref{equa102b}.

As a result, from \eqref{equa102} --- \eqref{equa102b} it follows that
\begin{eqnarray*}
\|\sigma_{2^{-n}}a\|_X &\le& \|\tau_{-n+1}QR_na\|_{E_X}\le \|\tau_{-n+1}\|_{E_X}\|QR_na\|_{E_X}\\ &=&\|\tau_{-n+1}\|_{E_X}\|QR_na\|_{X}\le \|\tau_{-n+1}\|_{E_X}\|a\|_{X}.
\end{eqnarray*}
Since $(\sigma_{2^{-n}}a)^*\le\sigma_{2^{-n}}(a^*)$, $n\in\mathbb{N}$, and $X$ is a symmetric space, the latter inequality holds for every $a\in X$. Hence, as a result, we obtain inequality \eqref{equa: 1001a} for negative $n$.

To complete the proof, it remains to observe that coincidence of the   Boyd indices of $X$ with the corresponding shift exponents of $E_X$  follows immediately from their definition and inequalities \eqref{equa: 1001} and \eqref{equa: 1001a}.
\end{proof}

Sometimes (see Section \ref{Lor and Orl} below) it is useful to know that a lattice of the form $E_X$ can be obtained also in a different way, when one starts from a Banach sequence lattice. 

Let us note first that for every symmetric sequence space $X$ and all $x\in X$ it holds
\begin{equation}
\label{second view} 
\|x\|_{X}\le \Big\|\sum_{k=0}^\infty x^*_{2^k}e_{k+1}\Big\|_{E_X}\le 5\|x\|_{X}.
\end{equation}
Indeed, assuming (as we can) that $x=x^*$, by the definition of $E_X$,
we get
$$
\Big\|\sum_{k=0}^\infty x_{2^k}e_{k+1}\Big\|_{E_X}=\Big\|\sum_{k=0}^\infty x_{2^k}\sum_{i=2^{k}}^{2^{k+1}-1}e_i\Big\|_{X}\ge \Big\|\sum_{i=1}^\infty x_{i}e_i\Big\|_{X}=\|x\|_X,$$
which gives the left-hand inequality. 

Conversely, from \eqref{equa: 1001} and the estimate $\|\sigma_2\|_{X\to X}\le 2$ \cite[Theorem~II.4.5]{KPS} it follows that
\begin{eqnarray*}
\Big\|\sum_{k=0}^\infty x_{2^k}e_{k+1}\Big\|_{E_X}&\le& \|x_1e_1\|_{E_X}+ \Big\|\sum_{k=1}^\infty x_{2^k}e_{k+1}\Big\|_{E_X}\le
\|x\|_X+\Big\|\tau_1\Big(\sum_{k=1}^\infty x_{2^k}e_{k}\Big)\Big\|_{E_X}\\
&\le& \|x\|_X+\|\tau_1\|_{E_X}\Big\|\sum_{k=1}^\infty x_{2^k}\sum_{i=2^{k-1}}^{2^{k}-1}e_i\Big\|_{X}\\&\le& \|x\|_X+2\|\sigma_2\|_{X\to X}\|x\|_X\le 5 \|x\|_X,
\end{eqnarray*}
and the right-hand side inequality in \eqref{second view} follows as well.

In the next result we show that, under some conditions, inequalities similar to \eqref{second view} for a Banach lattice $E$ imply that $E=E_X$ with equivalence of norms. Although this result is in fact known (cf. \cite[Proposition~5.1]{Kal}), taking into account that in the case of sequence lattices it is stated in \cite{Kal} without   proofs, we provide here its complete proof for convenience of the reader.

\begin{prop}
\label{prop:latttice and symm}
Let $E$ be a Banach sequence lattice, with $k_-(E)<1$,  and let $X$ be a symmetric sequence space satisfying
\begin{equation}
\label{equa10} 
\|a\|_{X}\asymp \Big\|\sum_{k=0}^\infty a^*_{2^k}e_{k+1}\Big\|_E.
\end{equation}
Then, $E_X=E$ (with equivalence of norms).
\end{prop}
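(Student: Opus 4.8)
The plan is to show the two inclusions $E\hookrightarrow E_X$ and $E_X\hookrightarrow E$ with bounded inclusion maps. The bridge between the two lattices is the hypothesis \eqref{equa10}, which says that the ``dyadic sampling'' map $a\mapsto (a^*_{2^k})_{k\ge 0}$ sends $X$ into $E$ boundedly and below-boundedly; combined with the definition of $E_X$ and the already-established estimate \eqref{second view} (which is exactly \eqref{equa10} with $E=E_X$), this should pin down $E$ as $E_X$. Concretely, for $b=(b_k)_{k=1}^\infty\in c_{0,0}$ with $b_k\ge 0$ I would form the symmetric vector $x_b:=\sum_{k=1}^\infty b_k\sum_{i=2^{k-1}}^{2^k-1}e_i\in X$, so that $\|b\|_{E_X}=\|x_b\|_X$ by definition. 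The whole game is to compare $\|x_b\|_X$ with $\|b\|_E$ using \eqref{equa10} applied to $x=x_b$, after identifying $x_b^*$ and then its dyadic subsequence $(x_b^*)_{2^k}$.

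First I would reduce to $b$ nonnegative and nonincreasing: since $E_X$ is built from a symmetric space it is in particular a lattice invariant under the relevant rearrangements on the dyadic-block structure, and by the lattice property of $E$ together with $k_-(E)<1$ (which makes the shift $\tau_1$ bounded on $E$, hence lets one dominate a general vector by its nonincreasing rearrangement up to a constant — this is where the hypothesis $k_-(E)<1$ is used), it suffices to treat $b=b^*$. For such $b$, the decreasing rearrangement of $x_b$ is again $x_b$ itself, and the dyadic subsequence satisfies $(x_b)_{2^k}=b_{k+1}$ for $k\ge 0$ (the value $b_{k+1}$ occupies the block of indices $2^k,\dots,2^{k+1}-1$). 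Hence $\sum_{k\ge 0}(x_b)^*_{2^k}e_{k+1}=\sum_{k\ge 0}b_{k+1}e_{k+1}=b$, and \eqref{equa10} applied to $x=x_b$ gives directly $\|b\|_{E_X}=\|x_b\|_X\asymp\|b\|_E$. This already yields $E=E_X$ on $c_{0,0}$ with equivalent norms, and then one passes to the whole spaces by the usual density/completeness argument (both $E$ and $E_X$ contain $c_{0,0}$; a norm-equivalence on a common dense subset of two Banach lattices that are moreover both determined by their restriction to finitely supported sequences forces the lattices to coincide — invoking, if needed, the Fatou-type or separability standing assumptions to handle the closure).

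The main obstacle I anticipate is the reduction step to nonincreasing $b$: a priori $E$ is only assumed to be a Banach sequence lattice, not symmetric, so one cannot simply rearrange inside $E$. The fix is to use $k_-(E)<1$: this gives $\|\tau_{-n}\|_E\le Ck^{n}$ for some $k<1$, so $\tau_{-n}\to 0$, and more usefully the backward shifts are uniformly bounded, which lets one show that for a nonnegative finitely supported $b$ one has $\|b\|_E\asymp\|b^*\|_E$ with a constant depending only on $E$ — indeed $b^*$ is obtained from $b$ by a permutation, and a finite permutation of a finitely supported vector can be realized as a bounded operation on $E$ precisely because all shifts $\tau_{\pm n}$ are bounded. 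Correspondingly, for $E_X$ one uses that $X$ is symmetric so that replacing the coefficient sequence $b$ by $b^*$ inside the block sum $x_b$ only rearranges coordinates of $x_b$ and hence preserves $\|x_b\|_X$ exactly. Once this symmetrization is in place, the identity $(x_b)^*_{2^k}=b_{k+1}$ and the direct application of \eqref{equa10} finish the argument with essentially no further computation; I would also remark that \eqref{second view} is the special case $E=E_X$ of \eqref{equa10}, so the proposition is really the statement that \eqref{equa10} determines $X$ (hence $E_X$) up to equivalence.
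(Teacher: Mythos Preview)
Your argument for nonincreasing $b$ is correct and is exactly the paper's opening observation: for $b=b^*$ one has $(x_b)^*_{2^k}=b_{k+1}$, so \eqref{equa10} gives $\|b\|_{E_X}\asymp\|b\|_E$ at once.

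The gap is the reduction step. The claim $\|b\|_E\asymp\|b^*\|_E$ with a constant independent of $b$ is \emph{false} under the hypothesis $k_-(E)<1$ alone, and it is even false for $E=E_X$ itself. Take $b=e_n$: then $b^*=e_1$, and $\|e_n\|_{E_X}=\phi_X(2^{n-1})$ while $\|e_1\|_{E_X}=\phi_X(1)$, so the ratio blows up whenever $\phi_X$ is unbounded. Your justification contains a further error: replacing $b$ by $b^*$ in $x_b=\sum_k b_k\sum_{i=2^{k-1}}^{2^k-1}e_i$ does \emph{not} rearrange the coordinates of $x_b$, because the blocks have different lengths $2^{k-1}$; for instance $b=(0,1,0,\dots)$ gives $x_b=(0,1,1,0,\dots)$ but $x_{b^*}=(1,0,\dots)$. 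The observation that a finite permutation is a composition of bounded shifts is true but yields no uniform constant.

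The correct monotone substitute for $b^*$ is the \emph{running maximum from the right}, $M(b)_k:=\sup_{j\ge k}|b_j|=\big(\max_{j\ge 0}|\tau_{-j}b|\big)_k$. This sequence is nonincreasing, dominates $|b|$, and---unlike $b^*$---satisfies $\|M(b)\|_E\le\sum_{j\ge 0}\|\tau_{-j}\|_E\,\|b\|_E\le C\|b\|_E$ by the geometric decay coming from $k_-(E)<1$ (this is where that hypothesis is really used). One inclusion is then immediate: $\|b\|_{E_X}\le\|M(b)\|_{E_X}\asymp\|M(b)\|_E\le C\|b\|_E$. The reverse inclusion needs more work: from $\|b\|_E\le\|M(b)\|_E\asymp\|M(b)\|_{E_X}$ one must still show $\|M(b)\|_{E_X}\le C\|b\|_{E_X}$. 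The paper does this by expanding $M(b)$ inside $E_X$, identifying the $j$-th summand with $\sigma_{2^{-j}}$ applied to a rearrangement of $x_b$, and then proving the auxiliary estimate $\|\sigma_{2^{-j}}c\|_X\le C\|\tau_{-j+1}\|_{E}\|c\|_X$ for nonincreasing $c$ (again via \eqref{equa10}); summing over $j$ and using $k_-(E)<1$ once more closes the argument.
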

\begin{proof}

At first, observe that the norms $\|a\|_{E_X}$ and $\|a\|_{E}$ are equivalent if $a=(a_{k})_{k=1}^\infty$ is a decreasing nonnegative sequence (with the equivalence constant from \eqref{equa10}). Indeed, since $(\sum_{k=1}^\infty a_k\sum_{i=2^{k-1}}^{2^{k}-1}e_i)^*_{2^j}=a_{j+1}$, $j=0,1,\dots$, by the definition of $E_X$ and \eqref{equa10}, we have
$$
\|a\|_{E_X}=\Big\|\sum_{k=1}^\infty a_{k}\sum_{i=2^{k-1}}^{2^{k}-1}e_i\Big\|_X\asymp\Big\|\sum_{j=0}^\infty a_{j+1}e_{j+1}\Big\|_E=\|a\|_{E}.$$

Assume now that $a=(a_{k})_{k=1}^\infty\in E_X$ is arbitrary. The condition
$k_-(E)<1$ ensures that for some $\gamma>0$ and $C>0$  
\begin{equation}
\label{equa10a} 
\|\tau_{-j}\|_{E\to E}\le C2^{-\gamma j},\;\;j\in\mathbb{N}.
\end{equation}
Consequently, since the sequence $\left(\max_{j\ge 0}|\tau_{-j}a|)_k\right)$, $k=1,2,\dots$, is decreasing and
$$
|a_k|\le (\max_{j\ge 0}|\tau_{-j}a|)_k,\;\;k=1,2,\dots,$$
by the above observation, we have
\begin{eqnarray*}
\|a\|_{E_X} &\le& \left\|\left(\max_{j\ge 0}|\tau_{-j}a|\right)_k\right\|_{E_X}\asymp 
\left\|\left(\max_{j\ge 0}|(\tau_{-j}a|\right)_k\right\|_{E}\\ &\le& \Big\|\Big(\sum_{j=0}^\infty |\tau_{-j}a|\Big)_k\Big\|_{E}\le 
\sum_{j=0}^\infty \|\tau_{-j}\|_{E\to E} \|a\|_E= C\|a\|_E. 
\end{eqnarray*}

Conversely, for each $a=(a_{k})_{k=1}^\infty\in E$ we have
\begin{eqnarray}
\|a\|_{E} &\le& \left\|\left(\max_{j\ge 0}|\tau_{-j}a|\right)_k\right\|_{E}\asymp 
\left\|\left(\max_{j\ge 0}|\tau_{-j}a|\right)_k\right\|_{E_X}\nonumber\\ &=& \Big\|\sum_{k=1}^\infty \left(\max_{j\ge 0}|\tau_{-j}a|\right)_k\sum_{i=2^{k-1}}^{2^{k}-1}e_i\Big\|_{X}\nonumber\\&\le&
\Big\|\sum_{k=1}^\infty\sum_{j=0}^\infty (|\tau_{-j}a|)_k\sum_{i=2^{k-1}}^{2^{k}-1}e_i\Big\|_{X}\nonumber\\&\le&
\sum_{j=0}^\infty\Big\|\sum_{k=1}^\infty (|\tau_{-j}a|)_k\sum_{i=2^{k-1}}^{2^{k}-1}e_i\Big\|_{X}. 
\label{equa11}
\end{eqnarray}

Next, by the definition of the operator $\sigma_{2^{-n}}$, we have
$$
\sum_{k=1}^\infty (|\tau_{-j}a|)_k\sum_{i=2^{k-1}}^{2^{k}-1}e_i =
\sum_{k=1}^\infty|a_{k+j}|\sum_{i=2^{k-1}}^{2^{k}-1}e_i=
\sum_{r=j+1}^\infty|a_{r}|\sum_{i=2^{r-j-1}}^{2^{r-j}-1}e_i=\sigma_{2^{-j}}a^{(j)},
$$
where $a^{(j)}=\sum_{k=j+1}^\infty |a_{k}|\sum_{i=2^{k-1}}^{2^{k}-1}e_{i-2^j+1}$.
Therefore, since for every $j=0,1,\dots$
$$
(a^{(j)})^*\le \Big(\sum_{k=1}^\infty a_{k}\sum_{i=2^{k-1}}^{2^{k}-1}e_{i}\Big)^*$$
and $(\sigma_{2^{-n}}b)^*\le \sigma_{2^{-n}}(b^*)$, $n\in\mathbb{N}$,
 from \eqref{equa11} it follows that
\begin{equation}
\label{equa12} 
\|a\|_{E} \le \sum_{j=0}^\infty\Big\|\sigma_{2^{-j}}\Big(\sum_{k=1}^\infty a_{k}\sum_{i=2^{k-1}}^{2^{k}-1}e_i\Big)^*\Big\|_{X}.
\end{equation}

We claim that there exists a constant $C > 0$ such that for every nonincreasing and nonnegative sequence $b=(b_{k})_{k=1}^\infty\in X$ and all integers $j\ge 0$
\begin{equation}
\label{equa13} 
\|\sigma_{2^{-j}}b\|_X\le C\|\tau_{-j+1}\|_{E\to E}\|b\|_X.
\end{equation}

Indeed, by \eqref{equa10} and the inequality $(2^k-1)2^j+i\ge 2^{k+j-1}$ if $k,i\ge 1$, we have
\begin{eqnarray*}
\|\sigma_{2^{-j}}b\|_X &\asymp& \Big\|2^{-j}\sum_{k=0}^\infty\sum_{i=1}^{2^j} b_{(2^k-1)2^j+i}e_{k+1}\Big\|_E\\ &\le&
\Big\|b_1e_1+\sum_{k=1}^\infty b_{2^{k+j-1}}e_{k+1}\Big\|_E\\ &=&
\Big\|b_1e_1+\tau_{{-j+1}}\Big(\sum_{k=1}^\infty b_{2^{k}}e_{k+1}\Big)\Big\|_E\\ &\le&
\|b\|_X+\|\tau_{{-j+1}}\|_{E\to E}\Big\|\sum_{k=1}^\infty b_{2^{k}}e_{k+1}\Big\|_E\\
&\le& C\|\tau_{{-j+1}}\|_{E\to E}\|b\|_X,
\end{eqnarray*}
and \eqref{equa13} is proved. 

Applying estimate \eqref{equa13} to the sequence $b=\sum_{k=1}^\infty a_{k}\sum_{i=2^{k-1}}^{2^{k}-1}e_i$, we get
\begin{eqnarray*}
\Big\|\sigma_{2^{-j}}\Big(\sum_{k=1}^\infty a_{k}\sum_{i=2^{k-1}}^{2^{k}-1}e_i\Big)^*\Big\|_X &\le& C\|\tau_{-j+1}\|_{E\to E}\Big\|\sum_{k=1}^\infty a_{k}\sum_{i=2^{k-1}}^{2^{k}-1}e_i\Big\|_X\nonumber\\
&=& C\|\tau_{-j+1}\|_{E\to E}\|a\|_{E_X},
\label{equa14}
\end{eqnarray*}
and so from \eqref{equa12} and \eqref{equa10a} it follows that
$$
\|a\|_E \le C\sum_{j=0}^\infty\|\tau_{-j+1}\|_{E\to E}\|a\|_{E_X}= C'\|a\|_{E_X}.
$$
This completes the proof.
\end{proof}

From Proposition \ref{prop:latttice and symm} and Lemma  \ref{dilation}
it follows

\begin{cor}
\label{cor:basic}
Let $X$ be a symmetric sequence space such that equivalence \eqref{equa10} holds for some Banach sequence lattice $E$ with $k_-(E)<1$. Then, $\alpha_X>0$.
\end{cor}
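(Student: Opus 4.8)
The plan is to derive the statement directly from the two results that immediately precede it, so the argument will be very short. First I would invoke Proposition~\ref{prop:latttice and symm}: its hypotheses — that $E$ is a Banach sequence lattice with $k_-(E)<1$ and that $X$ is a symmetric sequence space satisfying \eqref{equa10} — are exactly the hypotheses of the corollary. Hence the proposition applies and gives $E_X=E$ with equivalence of norms; in particular there are constants $0<c\le C$ such that $c\,\|a\|_E\le\|a\|_{E_X}\le C\,\|a\|_E$ for every sequence $a$.

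Next I would observe that the lower shift exponent is unaffected by an equivalent renorming. Passing to operator norms, the two-sided estimate above yields $\|\tau_{-n}\|_{E_X}\le (C/c)\,\|\tau_{-n}\|_{E}$ and $\|\tau_{-n}\|_{E}\le (C/c)\,\|\tau_{-n}\|_{E_X}$ for all $n\in\fg$; raising to the power $1/n$ and letting $n\to\infty$, the factor $(C/c)^{1/n}$ tends to $1$, so $k_-(E_X)=k_-(E)$. On the other hand, Lemma~\ref{dilation}, applied to the symmetric sequence space $X$, identifies $k_-(E_X)=2^{-\alpha_X}$. Combining the two gives $2^{-\alpha_X}=k_-(E)<1$, which is precisely $\alpha_X>0$.

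There is essentially no obstacle here: all of the substantive content has already been established in Proposition~\ref{prop:latttice and symm} (the identification $E_X=E$) and in Lemma~\ref{dilation} (the formula $k_-(E_X)=2^{-\alpha_X}$). The only point that needs a line of verification is the renorming-invariance of $k_-$, and that is the elementary limit computation described above.
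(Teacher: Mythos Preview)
Your argument is correct and follows exactly the route the paper indicates: the corollary is stated as an immediate consequence of Proposition~\ref{prop:latttice and symm} and Lemma~\ref{dilation}, and you have simply spelled out the (straightforward) chain $k_-(E)=k_-(E_X)=2^{-\alpha_X}$. The only step you add beyond what the paper writes is the renorming-invariance of $k_-$, which is indeed the elementary limit computation you describe.
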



\smallskip

In the concluding part of this section, we establish a direct connection between spectral properties of the dilation operator $D:=\tau_1\sigma_2$ in a symmetric sequence space $X$ and the shift operator $\tau_1$ in $E_X$. 

Let ${{D}_\lambda}:={{D}}-\lambda I$ and ${T_\lambda}:={\tau_1}-\lambda I$, $\lambda>0$. 
Setting
$$
Sx:=\sum_{k=1}^\infty x_k\sum_{i=2^{k-1}}^{2^k-1}e_i,\;\;\mbox{where}\;x=(x_k)_{k=1}^\infty,$$ 
we get
\begin{eqnarray}
{{D}_\lambda} {Sa} &=&\sum_{k=1}^\infty a_k \sum_{i=2^{k}}^{2^{k+1}-1} e_i - \lambda \sum_{k=1}^\infty a_k\sum_{i=2^{k-1}}^{2^k-1}e_i\nonumber\\ &=&\sum_{k=2}^\infty a_{k-1} \sum_{i=2^{k-1}}^{2^{k}-1} e_i -\lambda \sum_{k=1}^\infty a_k\sum_{i=2^{k-1}}^{2^k-1}e_i\nonumber\\ &=&
\sum_{k=1}^\infty\big ({{T}_\lambda} a\big)_k\sum_{i=2^{k-1}}^{2^{k}-1} e_i = S{{{T}_\lambda}a}.
\label{eq16}
\end{eqnarray}
%

\begin{prop}\label{Proposition 1}
Let $X$ be a symmetric sequence space and $\lambda>0$. Then, we have: 

(i) ${{D}_\lambda} $ is closed in ${X}$ if and only if ${{T}_\lambda}$ is closed in $E_X$;

(ii) ${{D}_\lambda} $ is an isomorphism in ${X}$ if and only if the operator ${{T}_\lambda}$ is an isomorphism in $E_X$.
\end{prop}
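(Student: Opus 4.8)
The key observation is already recorded in equation \eqref{eq16}: the operator $S$ intertwines $T_\lambda$ on $E_X$ and $D_\lambda$ on $X$, i.e.\ $D_\lambda S = S T_\lambda$. The plan is to exploit this together with the averaging projection $Q$ from \eqref{projection Q}, which is a norm-one projection of $X$ onto the range $S(E_X)$, and which also intertwines the two operators in the other direction. Indeed, one checks that $QD = \tau_1 \sigma_2$ acting ``through'' $S$ — more precisely, $S$ is an isometric embedding of $E_X$ into $X$ with $\|Sa\|_X = \|a\|_{E_X}$ by the very definition of $E_X$, its image is the subspace of ``block-constant'' sequences, and $Q$ restricted to that subspace is the identity while $DS(E_X)\subset S(E_X)$ by \eqref{eq16}. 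So $S$ realizes $E_X$ as a $1$-complemented subspace of $X$ that is invariant under $D$, and on it $D$ acts as $S\tau_1 S^{-1}$.

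First I would establish (ii). If $T_\lambda$ is an isomorphism of $E_X$, then for every $a\in E_X$ we get $\|D_\lambda S a\|_X = \|S T_\lambda a\|_X = \|T_\lambda a\|_{E_X} \ge c\|a\|_{E_X} = c\|Sa\|_X$, so $D_\lambda$ is bounded below on the closed subspace $S(E_X)$. To pass to all of $X$, note that $D$ maps $X$ into $S(E_X)$ after one more averaging: actually $DQ = D$ composed appropriately — here I would use that $D = \tau_1\sigma_2$ and $\sigma_2 x$ already has the block structure $Sx'$ for $x' = \sigma_2 x$ read off on dyadic blocks, wait, more carefully: for any $x\in X$, $\sigma_2 x$ need not be block constant, so instead I use the factorization through $Q$: one has $QD = DQ'$ for a suitable projection, or more cleanly, since $Q$ has norm one and commutes with $D$ in the sense that $QDQ = DQ$ (because $D(S(E_X))\subseteq S(E_X)$ and $Q$ is the projection onto it), the identity $D_\lambda = D_\lambda Q + D_\lambda(I-Q)$ and $D_\lambda Q = (DQ - \lambda Q) = $ an operator with image in $S(E_X)$ on which we have the lower bound, together with $\lambda(I-Q)$ invertible on $\ker Q$, gives that $D_\lambda$ is bounded below on $X$. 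Combined with the symmetric argument applied to $T_\lambda^{-1}$ versus $D_\lambda^{-1}$ — or rather, a surjectivity argument using that $D_\lambda$ restricted to $S(E_X)$ is onto $S(E_X)$ and $D_\lambda$ is onto $\ker Q$ (since $\lambda\ne 0$ and $D(\ker Q)$... ) — one concludes $D_\lambda$ is an isomorphism of $X$. For the converse direction, if $D_\lambda$ is an isomorphism of $X$, then restricting to the invariant complemented subspace $S(E_X)\cong E_X$ and using that $D_\lambda$ preserves this subspace and its complement, $D_\lambda|_{S(E_X)}$ is an isomorphism of $S(E_X)$, which via the isometry $S$ says exactly that $T_\lambda$ is an isomorphism of $E_X$.

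For part (i), closedness, I would argue similarly but more softly: $D_\lambda$ closed on $X$ means $(D_\lambda - \mu I)$ invertible fails only for $\mu$ in a closed set, but here since $D_\lambda = D - \lambda I$ is already everywhere defined and bounded, ``closed'' must refer to closedness of the \emph{range} $D_\lambda(X)$ (equivalently $T_\lambda$ being semi-Fredholm / having closed range). The range of $T_\lambda$ in $E_X$ is closed iff the range of $D_\lambda$ restricted to $S(E_X)$ is closed, by the isometry $S$; and $D_\lambda(X) = D_\lambda(S(E_X)) \oplus \lambda\ker Q$ (the second summand because on $\ker Q$, $D$ vanishes — indeed $\ker Q$ consists of sequences with vanishing dyadic-block averages and $\sigma_2$ followed by $\tau_1$... here I need $D(\ker Q) \subseteq$ something controllable), so closedness of $D_\lambda(X)$ is equivalent to closedness of $D_\lambda(S(E_X))$, equivalently closedness of $T_\lambda(E_X)$.

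\textbf{Main obstacle.} The delicate point is the exact relationship between $D$ and the projection $Q$ — in particular verifying that $\ker Q$ behaves well under $D$ (or that the decomposition $X = S(E_X)\oplus\ker Q$ is \emph{invariant} for $D_\lambda$, not merely that $S(E_X)$ is invariant). Since $\sigma_2$ does not map $\ker Q$ into $\ker Q$ in general, the clean direct-sum decomposition of the range may fail, and one likely needs the finer estimate that $D$ maps $X$ into $S(E_X)$ up to a compact or small perturbation, or alternatively to run the whole argument at the level of the equation $D_\lambda S = S T_\lambda$ plus the left inverse $S^{-1}Q$ of $S$, showing directly that $S^{-1}QD_\lambda^{-1}S = T_\lambda^{-1}$ when $D_\lambda$ is invertible. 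I expect the cleanest route is: $S$ is an isometric embedding, $S^{-1}Q\colon X\to E_X$ is a norm-one left inverse, and $\eqref{eq16}$ plus a short computation showing $Q D = D Q$ (using $D(S(E_X))\subseteq S(E_X)$ and, for the other containment, that $Q$ kills exactly the fluctuation part which $\sigma_2$ then $\tau_1$ sends back into fluctuations on the next scale — this is where the real checking lies) together force the equivalence in both (i) and (ii).
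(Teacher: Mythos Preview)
Your proposal has the right building blocks --- the intertwining relation $D_\lambda S = S T_\lambda$ and the averaging projection $Q$ --- but there is a genuine gap in the converse direction ($T_\lambda$ bounded below $\Rightarrow$ $D_\lambda$ bounded below), and it is precisely the obstacle you flag but do not resolve.

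First, a factual error: $D$ does \emph{not} vanish on $\ker Q$. What is true (and the paper verifies by direct computation) is that $QD_\lambda = D_\lambda Q$, so both $S(E_X)$ and $\ker Q$ are $D$-invariant. Thus your decomposition $D_\lambda(X) = D_\lambda(S(E_X)) \oplus D_\lambda(\ker Q)$ is correct, but the second summand is \emph{not} $\lambda\ker Q$. Consequently, to run the direct-sum argument you would need to show independently that $D_\lambda|_{\ker Q}$ is bounded below whenever $T_\lambda$ is --- and there is no obvious reason for this, since $D|_{\ker Q}$ has its own spectral behaviour unrelated to $\tau_1$ on $E_X$.

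The paper sidesteps this entirely by a different device that exploits the symmetry of $X$. For the contrapositive, take $x^{(n)}$ with $\|x^{(n)}\|_X = 1$ and $\|D_\lambda x^{(n)}\|_X \to 0$. The key inequality (from the Calder\'on--Mityagin theory, cf.\ \cite[\S II.4]{KPS}) gives $\|D_\lambda(x^{(n)})^*\|_X \le \|D_\lambda x^{(n)}\|_X$, so one may assume each $x^{(n)}$ is nonnegative and \emph{nonincreasing}. Now apply $Q$: since $QD_\lambda = D_\lambda Q$ and $Qx = Sa_x$, one gets $\|T_\lambda a_{x^{(n)}}\|_{E_X} \to 0$. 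The crucial point is that for a nonincreasing $x$ one has the quantitative lower bound $\|a_x\|_{E_X} = \|Qx\|_X \ge \tfrac14\|x\|_X$ (proved via $\|Qx\|_X \ge \|\sigma_{1/2}x\|_X \ge \tfrac14\|x\|_X$), so the $a_{x^{(n)}}$ stay bounded away from zero and $T_\lambda$ fails to be bounded below.

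In short: your complementation picture correctly handles the easy direction, but for the hard direction the paper does not attempt to control $D_\lambda$ on $\ker Q$. Instead it uses the rearrangement $x \mapsto x^*$ to force the approximate eigenvectors into a cone where $Q$ is uniformly bounded below, and this use of the symmetric structure of $X$ is the idea your sketch is missing.
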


\begin{proof}
First, we observe that both operators ${{T}_\lambda}$ and ${{D}_\lambda} $ are injective for every $\lambda$. Indeed, for instance, the equation ${T}_\lambda a=0$, with $a=(a_n)$, means that $a_1=0$ and $a_{n-1}=\lambda a_n$ if $n\ge 2$, whence $a_n=0$ for all $n\in\mathbb{N}$. Therefore, it suffices to prove only (i). 

Suppose first that the operator ${{D}_\lambda} $ is closed in the space $X$. Let $a^n=(a^n_k)_{k=1}^\infty\in {E}_X$, $n=1,2,\dots$, and ${{T}_\lambda}a^n\to b=(b_k)$ in ${E}_X$. Since 
$$
{{T}_\lambda}a^n=-\lambda a^n_1e_1+\sum_{k=2}^\infty(a^n_{k-1}-\lambda a^n_{k})e_{k}$$
and ${{T}_\lambda}a^n\to b$ coordinate-wise as $n\to\infty$, we see that $b={{T}_\lambda}a$, where $a=(a_k)_{k=1}^\infty$, $a_k:=\lim_{n\to\infty} a^n_k$, $k\in\fg$. It remains to show that $a\in E_X$.


By hypothesis, the operator $D_\lambda$ is closed in $X$ (and hence it is an isomorphic mapping). Denote by $D_\lambda^{-1}$ the left converse operator to $D_\lambda$, defined on the subspace ${\rm Im}\,D_\lambda$ of $X$.  In view of \eqref{eq16}, it holds
$$
D_{\lambda}{S}{a^n}={S}{T}_\lambda{a^n}\;\;\mbox{for all}\;\;n=1,2,\dots,$$
whence
$$
{S}{a^n}=D_\lambda^{-1}{S}{T}_\lambda{a^n},\;\;n=1,2,\dots$$
Then, since the convergence ${{T}_\lambda}a^n\to b$ in ${E}_X$ implies that $S{{T}_\lambda}a^n\to Sb$ in $X$, we get: ${S}{a^n}\to D_\lambda^{-1}Sb$ in $X$. On the other hand, it is obvious that ${S}{a^n}\to Sa$ coordinate-wise as $n\to\infty$. Hence, $Sa=D_\lambda^{-1}Sb\in X$, which implies that $a\in E_X$, as we wish.



\smallskip
Let us prove the converse. To get a contradiction, assume that the operator $D_{\lambda}$ is not closed. Then, there exists a 
sequence $\{x^{(n)}\}\subset {X}$ with the following properties:
\begin{equation}
\label{eq19.1}
\|x^{(n)}\|_{{X}}=1,\;\;n=1,2,\dots,\;\;\mbox{and}\;\;\|
D_{\lambda}x^{(n)}\|_{{X}}\to 0.
\end{equation}
Since ${X}$ is separable or has the Fatou property, we have
$$
\|D_{\lambda}x^{(n)}\|_X\ge \|D(x^{(n)})^*-\lambda (x^{(n)})^*\|_X=\|D_{\lambda}(x^{(n)})^*\|_X$$
(see, e.g., \cite[Theorems II.4.9, II.4.10 and Lemma~II.4.6]{KPS} or \cite[\S\,3.7]{BSh}).
Consequently, we may assume that each of the sequences $x^{(n)}$, $n\in\fg$, is nonnegative and nonincreasing. 

Observe that the operators $Q$ (defined by \eqref{projection Q}) and $D_\lambda$ commute. Indeed, since $D=\tau_1\sigma_2$, we have
\begin{eqnarray*}
QD_{\lambda}x &=& \sum_{k=1}^\infty
2^{-k+1}\sum_{j=2^{k-1}}^{2^{k}-1} (D_{\lambda}x)_j \sum_{i=2^{k-1}}^{2^{k}-1} e_i\\
&=& \sum_{k=1}^\infty 2^{-k+1}\Big(\sum_{j=2^{k-1}}^{2^{k}-1} x_j \sum_{i=2^{k}}^{2^{k+1}-1} e_i-
\lambda\sum_{j=2^{k-1}}^{2^{k}-1}x_j\sum_{i=2^{k-1}}^{2^{k}-1} e_i\Big)\\
&=& 
D_{\lambda}\Big(\sum_{k=1}^\infty 2^{-k+1}\sum_{j=2^{k-1}}^{2^{k}-1}x_j\sum_{i=2^{k-1}}^{2^{k}-1} e_i\Big)=D_{\lambda}Qx.
\end{eqnarray*}
Therefore, noting that in our notation $Qx = Sa_x$, where
$$
a_x:=\left(2^{-k+1}\sum_{j=2^{k-1}}^{2^{k}-1} x_j\right)_{k=1}^\infty,$$
by \eqref{eq16}, we get for all $x\in X$
$$
S{{T}_\lambda}a_x=D_{\lambda}Sa_x=D_{\lambda}Qx=QD_{\lambda}x.$$
Since the projection $Q$ is bounded on ${X}$, then substituting $x^{(n)}$, $n=1,2,\dots$ for $x$ into the latter formula and taking   the limit as $n\to\infty$, by  \eqref{eq19.1}, we get 
\begin{equation}
\label{eq19.1aa}
\lim_{n\to\infty}\|{{T}_\lambda} a_{x^{(n)}}\|_{{E_X}}=0.
\end{equation}

On the other hand, by the definition of $E_X$ and the monotonicity of each sequence $x^{(n)}$,  
$$
\|a_{x^{(n)}}\|_{{E_X}}= \Big\|\sum_{k=1}^\infty 2^{-k+1}\sum_{j=2^{k-1}}^{2^{k}-1} x^{(n)}_j\sum_{i=2^{k-1}}^{2^{k}-1} e_i\Big\|_{{X}}\ge \Big\|\sum_{k=1}^\infty
x^{(n)}_{2^{k}-1}\sum_{i=2^{k-1}}^{2^{k}-1} e_i\Big\|_{{X}}.
$$
Consequently, since 
\begin{equation}
\label{eq19.1a}
(\sigma_{1/2}x^{(n)})_{m}=\frac12\left(x^{(n)}_{2m-1}+x^{(n)}_{2m}\right),\;\;m\in\fg,
\end{equation}
it follows that
\begin{eqnarray*}
\|a_{x^{(n)}}\|_{{E_X}}&\ge& \Big\|\sum_{k=1}^\infty
(\sigma_{1/2}x^{(n)})_{2^{k-1}}\sum_{i=2^{k-1}}^{2^{k}-1} e_i\Big\|_{{X}}\\ &\ge& \Big\|\sum_{i=1}^\infty
(\sigma_{1/2}x^{(n)})_{i}e_i\Big\|_{{X}}=\|\sigma_{1/2}x^{(n)}\|_X.
\end{eqnarray*}
Moreover, taking into account that $X$ is a symmetric space and $x^{(n)}\ge 0$, in view of \eqref{eq19.1a}, we have
\begin{eqnarray*}
\|x^{(n)}\|_X&\le&\Big\|\sum_{k=1}^\infty (x^{(n)})_{2k-1}e_{2k-1}\Big\|_X+\Big\|\sum_{k=1}^\infty (x^{(n)})_{2k}e_{2k}\Big\|_X\\&=&
\Big\|\sum_{k=1}^\infty (x^{(n)})_{2k-1}e_{k}\Big\|_X+\Big\|\sum_{k=1}^\infty (x^{(n)})_{2k}e_{k}\Big\|_X\\&\le& 2\max\left(\|(x^{(n)})_{2k-1}\|_X,\|(x^{(n)})_{2k}\|_X\right)\le 4\|\sigma_{1/2}x^{(n)}\|_X.
\end{eqnarray*}
Combining the last inequalities together with \eqref{eq19.1} implies that
$$
\|a_{x^{(n)}}\|_{{E_X}}\ge\frac14,\;\;n\in\fg,$$
and hence, by \eqref{eq19.1aa}, the operator ${T}_\lambda$ fails to be an isomorphic embedding in the space $E_X$. Since it is injective, this means that ${T}_\lambda$ is not closed in $E_X$, which contradicts the assumption.
\end{proof}

The following result is an immediate consequence of Proposition \ref{Proposition 1}. 

\begin{cor}
\label{cor:basic1}
Let $X$ be a symmetric sequence space. Then the operators ${{D}}$ in $X$ and ${\tau}_1$ in $E_X$ have the same set of positive approximate eigenvalues.
\end{cor}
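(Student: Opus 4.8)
The plan is to derive Corollary \ref{cor:basic1} directly from Proposition \ref{Proposition 1} by rewriting the notion of an approximate eigenvalue in operator-theoretic terms. First I would observe that, thanks to Lemma \ref{dilation}, the shift $\tau_1$ is bounded on $E_X$, so it makes sense to speak of its approximate eigenvalues; and that, quite generally, for a bounded operator $T$ on a Banach space $Y$ a scalar $\lambda$ is an approximate eigenvalue precisely when $T-\lambda I$ fails to be bounded below, i.e. $\inf_{\|y\|_Y=1}\|(T-\lambda I)y\|_Y=0$. Since the corollary concerns only positive approximate eigenvalues, it suffices to work with a fixed $\lambda>0$ and the operators $D_\lambda=D-\lambda I$ on $X$ and $T_\lambda=\tau_1-\lambda I$ on $E_X$.

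The key step is the elementary reformulation linking Proposition \ref{Proposition 1} to approximate eigenvalues. Recall from the opening lines of the proof of Proposition \ref{Proposition 1} that both $D_\lambda$ and $T_\lambda$ are injective for every $\lambda>0$. For an injective bounded operator, having closed range is equivalent to being bounded below: being bounded below forces the image to be complete, and, conversely, injectivity together with closed range allows the open mapping theorem to produce a bounded inverse on the range. Hence, in the terminology of Proposition \ref{Proposition 1}, ``$D_\lambda$ is closed in $X$'' means exactly ``$D_\lambda$ is bounded below'', and similarly for $T_\lambda$ in $E_X$. Therefore $\lambda>0$ is an approximate eigenvalue of $D$ in $X$ if and only if $D_\lambda$ is not closed in $X$, and $\lambda>0$ is an approximate eigenvalue of $\tau_1$ in $E_X$ if and only if $T_\lambda$ is not closed in $E_X$.

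It then remains to invoke Proposition \ref{Proposition 1}(i), which asserts precisely that $D_\lambda$ is closed in $X$ if and only if $T_\lambda$ is closed in $E_X$. Chaining the three equivalences yields that $\lambda>0$ is an approximate eigenvalue of $D$ in $X$ exactly when it is an approximate eigenvalue of $\tau_1$ in $E_X$, which is the assertion. I do not expect any genuine obstacle here, since all the substantive work has already been done in Proposition \ref{Proposition 1}; the only mildly delicate points are the routine functional-analytic fact that ``closed range $\Longleftrightarrow$ bounded below'' for injective operators and the bookkeeping that the word ``closed'' applied to a bounded operator in this paper means ``having closed range''. Alternatively, one could run the same argument through Proposition \ref{Proposition 1}(ii) instead of (i), using that for an injective operator ``isomorphism'' in the paper's sense again coincides with ``bounded below''.
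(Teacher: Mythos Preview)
Your proposal is correct and is precisely the way the paper intends the corollary to be read: the paper offers no separate proof, stating only that it is ``an immediate consequence of Proposition~\ref{Proposition 1}'', and your argument spells out exactly that immediacy via the standard equivalence, for injective bounded operators, between ``bounded below'' and ``closed range''.
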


\def\vr{{\cal T}}
\def\gh{{\mathbb R}}
\def\hj{{\mathbb R}}
\def\bc{\sum_{k=0}^\infty}
\def\cd{{\mathbb N}}
\def\ef{E}
\def\fg{{\mathbb N}}
\def\ob{\sum_{k=1}^\infty}
\def\hi{\sum_{k=1}^{\infty}a_kr_{k}}
\def\ji{{\cal J}}
\def\kl{(a_k)_{k=1}^{\infty}}
\def\mn{{T}_{\lambda}}
\def\op{{\rm Im}\,{T}_{\lambda}}
\def\pq{{\mathbb R}}
\def\zw{{\rm Ker}\,f_\lambda}
\def\qr{{\mathbb N}}
\def\rs{X_{\theta,q}}
\def\st{{Y_{\theta,q}}}

\section{Approximative eigenvalues of the shift operator in Banach sequence lattices.}

Suppose $E$ is a Banach sequence lattice such that the shift operator $\tau_1(a_k)=(a_{k-1})$ and its inverse $\tau_{-1}(a_k)=(a_{k+1})$  are bounded in $E$. Let $s_k:=\|e_k\|_E$, where $e_k$, $k=1,2,\dots$, are elements of the unit vector basis. 

Next, we suppose that the shift exponents $k_-(E)$ and $k_+(E)$ can be calculated when the shift operators $\tau_n(a_k)=(a_{k-n})$, $n\in\mathbb{Z}$, are restricted to the set $\{e_k\}_{k=1}^\infty$, or, more explicitly, it will be assumed that 
\begin{equation}
\label{equa16b} 
k_+(E)=\lim_{n\to\infty}\left(\sup_{k>n}\frac{s_k}{s_{k-n}}\right)^{1/n}\;\;\mbox{and}\;\;k_-(E)=\lim_{n\to\infty}\left(\sup_{k\in\cd}\frac{s_k}{s_{n+k}}\right)^{1/n}.
\end{equation}
Hence, $1/k_-(E)\le k_+(E)$.  

As we show in this section, then we are able to identify, in terms of the exponents $k_-(E)$ and $k_+(E)$, the set of all parameters $\lambda>0$, for which the operator $\mn=\tau_1-\lambda I$ (as usual, $I$ is the identity in $E$) is an isomorphic embedding in $E$.

%

\begin{prop}
\label{L2-new}
Suppose a separable Banach sequence lattice $E$ satisfies assumption  \eqref{equa16b}. Then, for every $\lambda>0$ the following conditions are equivalent:

(i) the operator $\mn$ is an isomorphic mapping in ${\ef}$;

(ii) the operator $\mn$ is closed in ${\ef}$;

(iii) $\lambda\in (0,1/k_-(E))\cup (k_+(E),\infty).$

Moreover, if $\lambda\in (k_+(E),\infty)$, then $\op ={\ef};$ if
$\lambda\in (0,1/k_-(E))$, then $\op$ is the closed subspace of $E$ of codimension $1$ consisting of all $(a_k)_{k=1}^\infty\in{\ef}$ with
\begin{equation}
\label{equa15a} 
\ob \lambda^k a_k=0.
\end{equation}
\end{prop}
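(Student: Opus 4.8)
The plan is to establish the cycle of implications $(iii)\Rightarrow(i)\Rightarrow(ii)\Rightarrow(iii)$, with the structural description of $\mathrm{Im}\,T_\lambda$ folded into the proof of the first implication. The key computational observation, which I would record at the outset, is the explicit solution of the equation $T_\lambda a = b$, i.e. $a_{k-1} - \lambda a_k = b_k$ for $k\ge 2$ and $-\lambda a_1 = b_1$. Solving this recursion forward gives $a_k = -\lambda^{-1}b_1 - \lambda^{-2}b_2 - \dots = -\sum_{j=1}^{k}\lambda^{-(k-j+1)}b_j$ when we treat $b$ as known on an initial segment; solving it ``backward'' from a hypothetical tail gives $a_k = \sum_{j=k+1}^\infty \lambda^{j-k-1}b_j$ provided that series converges. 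These two formal inverses are the heart of everything: the first is relevant when $\lambda$ is large (the forward sum converges in $E$ because $\|\tau_n\|$ grows like $k_+(E)^n < \lambda^n$), the second when $\lambda$ is small (the backward sum converges because $\|\tau_{-n}\|$ decays like $k_-(E)^n$, so $\lambda^{j-k-1}\|\tau_{-(j-k-1)}\|$ is summable once $\lambda < 1/k_-(E)$).

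\textbf{Proof of $(iii)\Rightarrow(i)$.} Suppose first $\lambda > k_+(E)$. By \eqref{equa16b} the spectral radius of $\tau_1$ acting on $E$ is $k_+(E)$ (here one uses $r(\tau_1) = \lim\|\tau_1^n\|^{1/n} = \lim\|\tau_n\|^{1/n}$ and compares with the restricted version via the standard estimate $\|\tau_n x\|_E \le (\sup_{k>n} s_k/s_{k-n})\|x\|_E$ valid because $E$ is a lattice and $\tau_n x$ is supported on a shifted segment with comparable basis-norms — this is exactly where assumption \eqref{equa16b} is used, and I would spell out the one-line lattice estimate). Hence $\lambda$ lies in the resolvent set of $\tau_1$, so $T_\lambda = \tau_1 - \lambda I$ is invertible onto all of $E$, giving $\mathrm{Im}\,T_\lambda = E$. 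Now suppose $0 < \lambda < 1/k_-(E)$. Then $\sum_{n\ge 0}\lambda^{n}\|\tau_{-n}\|_E < \infty$ by \eqref{equa10a}-type reasoning from \eqref{equa16b}, so the operator $R := -\sum_{n=1}^\infty \lambda^{n-1}\tau_{-n}$ converges in operator norm on $E$. A direct computation shows $T_\lambda R = R T_\lambda$ equals the identity modulo the rank-one obstruction coming from the ``boundary term'' $-\lambda a_1$: precisely, one checks $(T_\lambda R b)_k = b_k$ for all $k$ whenever $b$ satisfies \eqref{equa15a}, and conversely $\sum_k \lambda^k (T_\lambda a)_k = 0$ for every $a\in E$ (a telescoping/Abel computation: $\sum_k \lambda^k(a_{k-1}-\lambda a_k) - \lambda\cdot\lambda^1 a_1$ collapses, using that $\lambda^k a_k \to 0$, which holds since $|a_k| \le s_k^{-1}\|a\|_E$ and $\lambda^k s_k^{-1}\to 0$). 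So $\mathrm{Im}\,T_\lambda \subseteq \{a : \sum\lambda^k a_k = 0\} =: E_0$, while $R$ maps $E_0$ back into $E$ with $T_\lambda R|_{E_0} = \mathrm{id}_{E_0}$, and $T_\lambda$ is injective; hence $\mathrm{Im}\,T_\lambda = E_0$, which is closed of codimension one (the functional $a\mapsto\sum\lambda^k a_k$ is bounded on $E$ since $\sum\lambda^k s_k^{-1}<\infty$). In either case $T_\lambda$ is an isomorphism onto its (closed) image, proving (i) and the final ``Moreover'' assertions simultaneously.

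\textbf{Proof of $(i)\Rightarrow(ii)$ and $(ii)\Rightarrow(iii)$.} The first of these is immediate: an isomorphic embedding has closed graph, so is a closed operator (and here $T_\lambda$ is everywhere-defined and bounded, so closedness is automatic — I would remark that in this setting ``closed'' is only a genuine restriction because one is really tracking whether the image is closed, equivalently whether $T_\lambda$ is bounded below; I will phrase (ii) as ``$T_\lambda$ is bounded below'', which is what Proposition~\ref{Proposition 1} consumes). For $(ii)\Rightarrow(iii)$, argue by contraposition: suppose $\lambda \in [1/k_-(E), k_+(E)]$. The main obstacle — and the part needing the most care — is producing approximate null vectors for $T_\lambda$ in this range, i.e. showing $T_\lambda$ is \emph{not} bounded below. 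Here I would invoke the uniform boundedness / resolvent-growth technique already used in the paper for the operator $D$ (see the proof of Proposition~\ref{prop 2}): from \eqref{equa16b} one gets $\|\tau_n\|_E \ge (k_+(E))^n \cdot c_n$ with $c_n$ subexponential at the endpoint $\lambda = k_+(E)$, and positivity of $\tau_1$ lets one run the same $(n+1)\lambda^{-n}\tau_1^n$ argument plus a resolvent $(\tau_1 - rI)^{-2}$ blow-up as $r\downarrow k_+(E)$, yielding unit vectors $g_n$ with $\|T_{k_+(E)} g_n\|\to 0$; symmetrically, working with $\tau_{-1}$ and $r\uparrow 1/k_-(E)$ handles the lower endpoint $\lambda = 1/k_-(E)$. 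For interior $\lambda\in(1/k_-(E), k_+(E))$ one can either interpolate or, more cheaply, note directly that the formal backward solution now \emph{diverges} while the forward solution also diverges, and construct finitely-supported approximate eigenvectors $g_n = \sum_{k=1}^n \lambda^{-k} e_k / \|\cdot\|$ for which $T_\lambda g_n$ is a two-term boundary expression whose norm is $o(\|g_n\|)$ — this is the cleanest route and is essentially the computation already appearing in the $(c)\Rightarrow(d)$ part of Theorem~\ref{Theorem 1a}. In all three cases $T_\lambda$ fails to be bounded below, hence (by injectivity) its image is not closed, so (ii) fails; this closes the loop.

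\textbf{Where the difficulty lies.} The routine parts are the two convergent-series inverses and the telescoping identity \eqref{equa15a}. The genuinely delicate point is the endpoint analysis $\lambda = k_+(E)$ and $\lambda = 1/k_-(E)$: one must show $T_\lambda$ is not bounded below even though $\lambda$ sits exactly on the boundary of the resolvent set, and for this the naive Neumann series is useless. The resolvent-blowup argument using positivity of the shift (as in Proposition~\ref{prop 2}) is the tool, and I would import it essentially verbatim, noting that $\tau_1$ and $\tau_{-1}$ are positive operators on the lattice $E$ and that assumption \eqref{equa16b} supplies precisely the lower bounds $\|\tau_{\pm n}\|_E \ge k_\pm^{\pm n}$ needed to feed that machine.
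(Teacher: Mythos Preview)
Your argument for $(iii)\Rightarrow(i)$ via the explicit operator $R=-\sum_{n\ge 1}\lambda^{n-1}\tau_{-n}$ is correct and is in fact more transparent than the paper's route (which decomposes $E=[e_1]\oplus[\{e_n\}_{n\ge 2}]$ and uses the factorisation $T_\lambda|_{E_\infty}=-\lambda\tau_1(\tau_{-1}-\lambda^{-1}I)$). One small remark: you do not need the parenthetical lattice estimate $\|\tau_n x\|_E\le(\sup_{k>n}s_k/s_{k-n})\|x\|_E$, which is not a general fact; assumption \eqref{equa16b} is precisely the hypothesis that the two limits agree, so $r(\tau_1)=k_+(E)$ is given, not proved.

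The genuine gap is in the interior case $1/k_-(E)<\lambda<k_+(E)$. Your candidate $g_n=\sum_{k=1}^n\lambda^{-k}e_k$ gives $T_\lambda g_n=-e_1+\lambda^{-n}e_{n+1}$, but you have no control forcing this to be $o(\|g_n\|)$: the term $\lambda^{-n}s_{n+1}=\lambda\nu_{n+1}$ can be of the same order as $\|g_n\|$ (think of a weighted $c_0$ with oscillating weights, where $\|g_n\|=\max_{k\le n}\nu_k$ and $\nu_{n+1}$ may realise that maximum). The analogy with the $(c)\Rightarrow(d)$ step of Theorem~\ref{Theorem 1a} is misleading: there the hypothesis of crude symmetric finite representability supplies the a priori bounds $\|z_k\|\asymp 1$ and $\|\sum_{k\le n}z_k\|\asymp n^{1/p}$, which are exactly what make the boundary term negligible; here no such bounds are available. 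Notice also that your sketch appears to use only $\lambda<k_+(E)$, yet for $\lambda<1/k_-(E)$ you have already shown $T_\lambda$ \emph{is} bounded below, so any argument that does not genuinely exploit $\lambda>1/k_-(E)$ must fail. The paper's device is to apply $T_\lambda^2$ to the \emph{squared} partial sum $(I+\lambda^{-1}\tau_1+\dots+\lambda^{-n}\tau_1^n)^2e_{k-n}$; the square produces a middle coefficient of size $n$, and comparing $\|T_\lambda^2 a\|\ge c^2\|a\|$ with the three-term expression for $T_\lambda^2 a$ yields, for $n$ large, the inequality $\nu_k<\max(\nu_{k-n},\nu_{k+n})$ for every $k>n$, where $\nu_k=\lambda^{-k}s_k$. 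One then uses $\lambda<k_+(E)$ to find a starting $k$ with $\nu_k>\nu_{k-n}$, iterates to get $(\nu_{k+rn})_r$ increasing, and finally uses this monotonicity to force $k_-(E)\le 1/\lambda$, contradicting $\lambda>1/k_-(E)$.

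For the endpoints your resolvent-blowup import from Proposition~\ref{prop 2} would work, but the paper instead observes that $T_\lambda$ is Fredholm of index $0$ for $\lambda>k_+(E)$ and of index $-1$ for $\lambda<1/k_-(E)$; openness of the Fredholm set and local constancy of the index then preclude $T_\lambda$ from being an isomorphic embedding at either endpoint.
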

\begin{proof}
First of all, observe that the operator $\mn$ is injective for every $\lambda$. Consequently, the equivalence of conditions (i) and (ii) is obvious.

Let us determine the possible form of the subspace $\op$. From the equality
$$
\mn\left(\sum_{i=1}^{n} \lambda^{n-i}e_i\right)=e_{n+1}-\lambda^{n}e_1$$
it follows that $e_{n+1}-\lambda^{n}e_1\in\op$ for all $n\in\cd.$ Hence, if $f_\lambda$  is a linear functional vanishing at $\op$, we have $f_\lambda (e_{n+1})=\lambda^{n}f_\lambda (e_{1})$, $n\in\cd$. Therefore, we may assume that $f_\lambda$  corresponds to the sequence $(\lambda^{n-1})_{n=1}^\infty.$ Observe that if $a=(a_k)_{k=1}^\infty\in c_{00}$, then $f_\lambda({T}_\lambda a)=0$. Indeed, for each $k\in \mathbb{N}$, $k\ge 2$,
$$ 
{T}_\lambda e_k ={T}_\lambda \left(\sum_{i=1}^{k} \lambda^{k-i}e_i-\lambda\sum_{i=1}^{k-1} \lambda^{k-1-i}e_i\right)=e_{k+1}-\lambda^{k}e_1-\lambda(e_{k}-\lambda^{k-1}e_1)=e_{k+1}-\lambda e_{k}.
$$
Since the last equation holds also for $k=1$ and $f_\lambda(e_{k+1}-\lambda e_{k})=0$, $k=1,2,\dots$, the claim is proved. 

By assumption, $E$ is separable. Hence, the dual space $E^*$ coincides with the K\"{o}the dual $E'$ (see Section \ref{prel1}), whence the condition $f_\lambda\in{\ef} ^*$ is equivalent to the fact that 
\begin{equation}
\label{eq16a} 
\ob \lambda^{k}a_k <\infty,\;\;\mbox{for all}\;\;(a_k)_{k=1}^\infty\in E.
\end{equation}
Moreover, if \eqref{eq16a} holds, then the Hahn-Banach theorem combined with the preceding reasoning implies that
\begin{equation}
\label{eq16b} 
\overline{\op}=\zw.
\end{equation}
On the other hand, if $f_\lambda\not\in{\ef} ^*$, we have
\begin{equation}
\label{eq16c} 
\overline{\op}={\ef}.
\end{equation}

If $\lambda\in (k_+(E),\infty)$, then the fact that the spectral radius of $\tau_1$ is equal to $k_+(E)$ immediately implies that the operator $\mn$ is an isomorphism of ${\ef}$ onto ${\ef}$. 

Let us consider now the case when $\lambda\in (0,1/k_-(E))$. We show that the functional $f_\lambda$ is bounded on ${\ef}$ and the subspace $\op$ is closed in $E$. Once we prove this, by \eqref{eq16b}, ${\op}={\zw}$ and so being injective $\mn$  is an isomorphic mapping of ${\ef}$ onto the closed subspace $\zw$ consisting of all $(a_k)_{k=1}^\infty\in{\ef}$ satisfying \eqref{equa15a}.

Choose $\eta\in (\lambda,1/k_-(E))$. Then, $1/\eta > k_-(E)$ and hence, in view of \eqref{equa16b}, there is $C > 0$ such that  
$$
 \sup_{k\ge 0}\frac{s_k}{s_{k+n}}\le C\eta^{-n},\;\;n=1,2,\dots,$$
whence
$$
s_n ^{-1}\le s_1^{-1}C\eta^{-n},\;\;n=1,2,\dots.$$
On the other hand, since $E$ is a Banach lattice, for every $a=(a_k)_{k=1}^\infty\in E$ we have $|a_k|\|e_k\|_E\le\|a\|_E$, i.e., $|a_k|\le\|a\|_Es_k^{-1}$. Combining the last inequalities, we get
$$
\sum_{n=1}^\infty \lambda^{n}a_n\le \|a\|_E\sum_{n=1}^\infty \lambda^{n}s_n^{-1}\le s_1^{-1}C\|a\|_E\sum_{n=1}^\infty (\lambda/\eta)^{n}<\infty.$$
Thus, condition \eqref{eq16a} is fulfilled for each $a=(a_k)_{k=1}^\infty\in E$, yielding $f_\lambda\in{\ef} ^*.$ 

In order to prove that $\op$ is closed, we represent ${\ef}$ as follows:
$$
{\ef}=E_1+E_\infty,$$
where $E_1=[\{e_1\}]_{E}$ and $E_\infty=\,[\{e_n\}_{n\ge 2}]_{E}$
$([A]_{E}$ is the closed linear span of a set $A$ in the space $\ef).$
Since $\op=\mn(E_1)+\mn(E_\infty)$ and the space $\mn(E_1)$ is one-dimensional, it suffices to prove only that the set $\mn(E_\infty)$ is closed.

One can easily check that
\begin{equation}
\label{equa16d} 
\mn(E_\infty)=-\lambda \tau_1(\tau_{-1}-\lambda^{-1} I)(E_\infty),
\end{equation}
Since the spectral radius of $\tau_{-1}$ is equal to $k_-(E)$, the operator $\tau_{-1}-\lambda^{-1} I$ maps isomorphically ${\ef}$ onto ${\ef}$ whenever $0<\lambda<1/k_-(E)$. Consequently, since 
$E_\infty$ is closed in $E$, the set $(\tau_{-1}-\lambda^{-1} I)(E_\infty)$ is closed in $E$ as well. Then, in view of the fact that $\tau_1$ is an
isomorphic embedding in $E$, from \eqref{equa16d} it follows that the subspace $\mn(E_\infty)$ is closed.

Thus, in the case $0 <\lambda<1/k_-(E)$ the operator $\mn$ maps the space $\ef$ isomorphically onto the subspace of codimension $1$ consisting of all
$(a_k)\in{\ef}$ satisfying \eqref{equa15a}.

It remains to prove implication $(i) \Longrightarrow (iii)$, i.e., that the operator $\mn:\,{\ef}\to\ef$ fails to be an isomorphic embedding whenever $1/k_-(E)\le \lambda\le k_+(E)$.

Suppose first that $1/k_-(E)<\lambda<k_+(E)$. To the contrary, assume that there exists $c > 0$ such that for all $x\in{\ef}$
\begin{equation}
\label{equa18} 
\|\mn x\|_{\ef}\ge c\|x\|_{\ef}.
\end{equation}
Let $n,k\in\qr$, $k>n$, be arbitrary for a moment (they will be fixed later). We put
$a:=(I+\lambda^{-1}{\tau_1}+\dots+\lambda^{-n}{\tau_1}^n)^2e_{k-n}.$
A direct calculation shows that $a\ge n\lambda^{-n}e_{k},$ whence
\begin{equation}
\label{equa19} 
\|a\|_{\ef}\ge n\lambda^{-n}s_{k}.
\end{equation}

Now, we estimate the norm $\|\mn ^2a\|_{\ef}$ from above. First,
\begin{multline*}
\mn ^2(I+\lambda^{-1}{\tau_1}+\dots +\lambda^{-n}{\tau_1}^n)^2=\\
=\lambda({\tau_1}-\lambda I)(\lambda^{-(n+1)}{\tau_1}^{n+1}-I)(I+\lambda^{-1}{\tau_1}+\dots +\lambda^{-n}{\tau_1}^n)=\\
=\lambda^{2}I-2\lambda^{-(n-1)}{\tau_1}^{n+1}+\lambda^{-2n}{\tau_1}^{2n+2}.
\end{multline*}
Consequently,
$$
\mn ^2a=\lambda^{2}e_{k-n}-2\lambda^{-(n-1)}e_{k+1}+\lambda^{-2n}e_{n+k+2},$$
and from the triangle inequality it follows
\begin{eqnarray*}
\|\mn ^2a\|_{\ef}&=&\lambda^{2}s_{k-n}+2\lambda^{-(n-1)}s_{k+1}+\lambda^{-2n}s_{n+k+2}\\
&\le& \lambda^2s_{k-n}+2\lambda^{-(n-1)}\|{\tau_1}\|_{E}s_{k}+\lambda^{-2n}\|{\tau_1}\|_{E}^2s_{n+k}.
\end{eqnarray*}
Hence,
$$
\|\mn ^2a\|_{\ef}-2\lambda\|{\tau_1}\|_{E}\cdot \lambda^{-n}s_{k}\le 2\max(\lambda^{2},\|{\tau_1}\|_{E}^2)\max(s_{k-n},\lambda^{-2n}s_{n+k}).$$
Let us observe that \eqref{equa18} and \eqref{equa19} yield
$$
\|\mn ^2a\|_{\ef}\ge c^2n\lambda^{-n}s_{k}.$$
Therefore, choosing $n\in\qr$ so that 
$$
c^2n>2\lambda\|{\tau_1}\|_{E}+2\max(\lambda^{2},\|{\tau_1}\|_{E}^2),$$
from the preceding inequality we get
$$
\lambda^{-n}s_{k}<\max(s_{k-n},\lambda^{-2n}s_{n+k}),$$
or, equivalently,
\begin{equation}
\label{equa20} 
\nu _{k}<\max(\nu _{k-n},\nu _{n+k})\;\;\mbox{for all}\;\;k>n,
\end{equation}
where $\nu_n:=\lambda^{-n}s_n$.

By assumption, $\lambda <k_+(E)$. Therefore, by \eqref{equa16b}, for the already chosen number $n\in\qr$ we can find $k\in\cd$ such that $k>n$ and
$s_{k}>\lambda^{n}s_{k-n}$, i.e., $\nu _{k}>\nu _{k-n}$. Hence,  from \eqref{equa20} it follows that $\nu _{k+n}>\nu _{k}$. Substituting $k + n$ for $k$ in \eqref{equa20} and taking into account that $\nu _{k+n}>\nu _{k}$, we obtain $\nu _{k+2n}>\nu _{k+n}$. Proceeding in the same way, we conclude that the sequence $(\nu _{k+rn})_{r=0}^\infty$, with the above $n,k\in\mathbb{N}$, is increasing.

Let $j\ge k$ and $m\ge n.$ We find $1\le r_1\le r_2$ such that
$$
k+(r_1-1)n \le j \le k+r_1n\;\;\mbox{and}\;\;k+r_2n \le j+m \le k+(r_2+1)n.$$
Setting $C_1:=\max_{l=1,2,\dots,n}\|\tau_{-l}\|_{E\to E}$, we have
$$
s_j\le C_1s_{k+r_1n}\;\;\mbox{and}\;\;s_{k+r_2n}\le C_1s_{j+m}.$$
Therefore,
\begin{equation}
\label{equa20a} 
\frac{s_{j+m}}{s_j}\ge C_1^{-2}\frac{s_{k+r_2n}}{s_{k+r_1n}}=
C_1^{-2}\frac{\lambda^{r_2n}\nu _{k+r_2n}}{\lambda^{r_1n}\nu _{k+r_1n}}\ge \lambda^{n(r_2-r_1)}.
\end{equation}

If $\lambda\ge 1$, then in view of the inequality $m-2n\le(r_2-r_1)n,$ we deduce from \eqref{equa20a} that
$$
\frac{s_{j+m}}{s_j}\ge C_1^{-2}\lambda^{m-2n}\;\;\mbox{for all}\;j\ge k\;\mbox{and}\;m\ge n,$$
or
$$
\sup_{j\ge k}\frac{s_j}{s_{j+m}}\le C_2\lambda^{-m}\;\;\mbox{for all}\;m\in \fg.$$
Moreover, for all $1\le j<k$ we have
$$
\frac{s_j}{s_{j+m}}\le\frac{s_j}{s_{k}}\cdot\frac{s_{k+m}}{s_{j+m}}\cdot\frac{s_{k}}{s_{k+m}}\le C_3\frac{s_k}{s_{k+m}},$$
where $C_3:=\max_{l=\pm 1,\pm 2,\dots,\pm k}\|\tau_{l}\|_{E\to E}$.
Combining the last estimates, we infer
$$
\sup_{j\in\mathbb{N}}\frac{s_j}{s_{j+m}}\le C\lambda^{-m}$$
for some constant $C>0$ and all $m\in \fg$.
Hence, in view of \eqref{equa16b}, we conclude that $k_-(E)\le 1/\lambda$, which contradicts the assumption.

If $0<\lambda<1$, then from the inequality $m\ge(r_2-r_1)n$ and estimate \eqref{equa20a} it follows that
$$
\frac{s_{j+m}}{s_j}\ge C_1^{-2}\lambda^{m}\;\;\mbox{for all}\;j\ge k\;\mbox{and}\;m\ge n.$$
Then, reasoning precisely in the same way, we again obtain that $k_-(E)\le 1/\lambda$. As a result, the operator ${T}_{\lambda}$ fails to be an isomorphic embedding in ${\ef}$ for 
$\lambda\in (1/k_-(E),k_+(E))$.

Finally, we need to check that neither ${T}_{k_+(E)}$ nor ${T}_{1/k_-(E)}$ is an isomorphic embedding in ${\ef}$. 

According to \eqref{eq16b} and \eqref{eq16c}, if  ${T}_\lambda$ is an isomorphic embedding in ${\ef}$, then it is both a Fredholm operator of index $0$ or $-1$. In particular, it was proved that ${T}_\lambda$ is a Fredholm operator of index $0$ (resp. $-1$) if $\lambda\in(k_+(E),\infty)$ (resp. $\lambda\in (0,1/k_-(E))$). Moreover, we know that in the case when $1/k_-(E)<k_+(E)$ the operator ${T}_\lambda$ fails to be an isomorphic embedding in $E$ for $\lambda\in (1/k_-(E),k_+(E))$. Therefore, since the set of all Fredholm operators is open in the space of all bounded linear operators on ${\ef}$ with respect to the topology generated by the operator norm (see, e.g., \cite[Theorem~III.21]{KG}), we conclude that each of the operators ${T}_{k_+(E)}$ nor ${T}_{1/k_-(E)}$ may not be isomorphic. The same result follows if $1/k_-(E)=k_+(E)$, because the set of Fredholm operators with a fixed index is also open (see, e.g., \cite[Theorem~III.22]{KG}). 



\end{proof}

\begin{cor}
\label{cor-new}
Let $X$ be a separable symmetric sequence space of fundamental type.
Then, for every $\lambda>0$ the following conditions are equivalent:

(i) the operator $D$ is an isomorphic mapping in $X$;

(ii) the operator $D$ is closed in $X$;

(iii) $\lambda\in (0,2^{\alpha_X})\cup (2^{\beta_X},\infty).$

Hence, the set of all positive approximate eigenvalues of the operator $D$ coincides with the interval $[2^{\alpha_X},2^{\beta_X}]$.
\end{cor}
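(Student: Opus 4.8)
The plan is to obtain Corollary \ref{cor-new} by applying Proposition \ref{L2-new} to the Banach sequence lattice $E_X$ and then transporting the conclusion back to $X$ via Proposition \ref{Proposition 1}. Before Proposition \ref{L2-new} can be used, two facts about $E_X$ must be checked: that $E_X$ is separable, and that it satisfies assumption \eqref{equa16b}, i.e.\ that its shift exponents are computed on the unit vector basis. Separability is routine: since $X$ is separable, its finitely supported elements are dense in $X$, and composing with the norm-one averaging projection $Q$ of \eqref{projection Q}, whose range is the isometric copy $S(E_X)$ of $E_X$ sitting inside $X$, shows that the finitely supported sequences are dense in $E_X$ as well; boundedness of every shift $\tau_n$ on $E_X$ is part of Lemma \ref{dilation}.

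The assumption \eqref{equa16b} is where the fundamental-type hypothesis enters, and I expect this to be the only real work. First I would note that, by the definition of $E_X$,
$$
s_k := \|e_k\|_{E_X} = \Big\|\sum_{i=2^{k-1}}^{2^{k}-1} e_i\Big\|_X = \varphi_X(2^{k-1}),\qquad k\ge 1,
$$
so that the basis-restricted expressions in \eqref{equa16b} become, for each $n\in\fg$,
$$
\sup_{k>n}\frac{s_k}{s_{k-n}} = \sup_{j\ge 0}\frac{\varphi_X(2^{n+j})}{\varphi_X(2^{j})}
\qquad\text{and}\qquad
\sup_{k\in\fg}\frac{s_k}{s_{n+k}} = \sup_{j\ge 0}\frac{\varphi_X(2^{j})}{\varphi_X(2^{n+j})}.
$$
Next I would compare these suprema, taken only over powers of $2$, with the dilation functions $M_X^\infty$ and $M_X^0$ of $\varphi_X$. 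Since $\varphi_X$ is nondecreasing, every index $m$ lies between two consecutive powers of $2$, which gives the sandwich
$$
\sup_{j\ge 0}\frac{\varphi_X(2^{n+j})}{\varphi_X(2^{j})} \le M_X^\infty(2^n) \le \sup_{j\ge 0}\frac{\varphi_X(2^{n+1+j})}{\varphi_X(2^{j})},
$$
and likewise for $M_X^0$. As the basis-restricted quantities are submultiplicative in $n$, taking $n$-th roots and passing to the limit shows that the two limits in \eqref{equa16b} equal $2^{\nu_X}$ and $2^{-\mu_X}$, respectively. By Lemma \ref{dilation}, $k_+(E_X)=2^{\beta_X}$ and $k_-(E_X)=2^{-\alpha_X}$; since $X$ is of fundamental type, $\nu_X=\beta_X$ and $\mu_X=\alpha_X$, so these limits coincide with $k_+(E_X)$ and $k_-(E_X)$, i.e.\ \eqref{equa16b} holds for $E_X$, with $1/k_-(E_X)=2^{\alpha_X}$ and $k_+(E_X)=2^{\beta_X}$. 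This is the crux: for a general symmetric space $X$ the norm $\|\sigma_{2^n}\|_X$, which equals $\|\tau_n\|_{E_X}$ up to the factor $2$ of Lemma \ref{dilation}, may strictly exceed $M_X^\infty(2^n)$, so the shift exponents of $E_X$ need not be attained on the unit vector basis; fundamental type is precisely what excludes this.

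With \eqref{equa16b} available, the proof closes quickly. Proposition \ref{L2-new} applied to $E=E_X$ gives, for every $\lambda>0$, that $T_\lambda=\tau_1-\lambda I$ is an isomorphic mapping in $E_X$ if and only if it is closed in $E_X$ if and only if $\lambda\in(0,2^{\alpha_X})\cup(2^{\beta_X},\infty)$. By Proposition \ref{Proposition 1}, $D_\lambda=D-\lambda I$ is an isomorphism (resp.\ closed) in $X$ exactly when $T_\lambda$ is an isomorphism (resp.\ closed) in $E_X$, which yields the equivalence of (i), (ii) and (iii). Finally, $D_\lambda$ is always injective (Proposition \ref{Proposition 1}), so it is an isomorphic mapping in $X$ precisely when it is bounded below, that is, precisely when $\lambda$ fails to be an approximate eigenvalue of $D$ (one may alternatively invoke Corollary \ref{cor:basic1}); hence the set of positive approximate eigenvalues of $D$ equals $(0,\infty)\setminus\big((0,2^{\alpha_X})\cup(2^{\beta_X},\infty)\big)=[2^{\alpha_X},2^{\beta_X}]$.
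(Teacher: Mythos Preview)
Your proof is correct and follows essentially the same route as the paper: compute $s_k=\varphi_X(2^{k-1})$, sandwich $\sup_{k>n}s_k/s_{k-n}$ against $M_X^\infty(2^n)$ (and similarly for $M_X^0$) to see that the basis-restricted shift exponents of $E_X$ equal $2^{\nu_X}$ and $2^{-\mu_X}$, invoke fundamental type and Lemma~\ref{dilation} to verify \eqref{equa16b}, and then combine Propositions~\ref{L2-new} and~\ref{Proposition 1}. The only cosmetic difference is that the paper bounds $M_X^\infty(2^n)\le 2\sup_{j}\varphi_X(2^{n+j})/\varphi_X(2^j)$ using quasi-concavity of $\varphi_X$, whereas you bound it by $\sup_{j}\varphi_X(2^{n+1+j})/\varphi_X(2^j)$ using only monotonicity; both are equivalent after taking $n$-th roots.
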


\begin{proof}
Show that the Banach lattice $E_X$ satisfies assumption   \eqref{equa16b}. Recall that
$$
M_X^\infty(2^n)=\sup_{m\in\fg}\frac{\phi_X(2^nm)}{\phi_X(m)},\;\;n\in\fg
$$
(see Section \ref{prel4}). Choosing for each $m\in\fg$ a nonnegative integer $k$ so that $2^k\le m<2^{k+1}$, we have $\phi_X(2^k)\le \phi_X(m)\le 2\phi_X(2^k)$. Therefore, from the preceding formula it follows
$$
\sup_{k\in\fg}\frac{\phi_X(2^{k+n})}{\phi_X(2^k)}\le M_X^\infty(2^{n})
\le 2\sup_{k\in\fg}\frac{\phi_X(2^{k+n})}{\phi_X(2^k)},\;\;n\in\fg,$$
whence
$$
M_X^\infty(2^{n})\asymp \sup_{k\in\fg}\frac{\|\sum_{i=1}^{2^{k+n}}e_i\|_X}{\sum_{i=1}^{2^{k}}e_i\|_X},\;\;n\in\fg.$$
On the other hand, by the definition of $E_X$, we have 
$$
s_k=\|e_k\|_{E_X}=\Big\|\sum_{i=2^{k-1}}^{2^k-1}e_i\Big\|_X,\;\;k\in\fg,$$ which combined with the fact that $X$ is a symmetric space, implies  
$$
\sup_{k\in\fg}\frac{\|\sum_{i=1}^{2^{k+n}}e_i\|_X}{\sum_{i=1}^{2^{k}}e_i\|_X}=\sup_{k>n}\frac{\|\sum_{i=2^{k-1}}^{2^k-1}e_i\|_X}{\|\sum_{i=2^{k-n-1}}^{2^{k-n}-1}e_i\|_X}=\sup_{k>n}\frac{s_k}{s_{k-n}}.$$
Thus,
$$
M_X^\infty(2^{n})\asymp \sup_{k>n}\frac{s_k}{s_{k-n}},\;\;n\in\fg,$$
and hence, by the definition of $\nu_X$ and the assumption that $X$ is a space of fundamental type, we have
$$
\beta_X=\nu_{X}=\lim_{n\to\infty}\frac{1}{n}\log_2 \sup_{k>n}\frac{s_k}{s_{k-n}}.$$
Applying now Lemma \ref{dilation}, we obtain the first equality in \eqref{equa16b}. The proof of the second is quite similar, and we skip it. 

Since $E_X$ is separable together with $X$, all the assertions of the corollary follow now from Propositions \ref{L2-new} and \ref{Proposition 1} and Lemma \ref{dilation}.
\end{proof}

\smallskip
From the last corollary and Theorem \ref{Theorem 1a} it follows a complete description of the set ${\mathcal F}(X)$ for every separable symmetric sequence space $X$ of fundamental type.

\begin{theor}\label{Theorem 4}
Let $X$ be a separable symmetric sequence space of fundamental type. Then, the following conditions are equivalent:

(i) $\ell^p$ is symmetrically block finitely represented in the unit vector basis $\{e_k\}$;

(ii) $\ell^p$ is symmetrically finitely represented in $X$;

(iii) $\ell^p$ is crudely symmetrically finitely represented in $X$;

(iv) $p\in [1/\beta_{X},1/\alpha_{X}]$, where $\alpha_{X}$ and $\beta_{X}$ are the Boyd indices of $X$.
\end{theor}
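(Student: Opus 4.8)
The plan is to obtain Theorem~\ref{Theorem 4} as a direct synthesis of the two principal results already established, Theorem~\ref{Theorem 1a} and Corollary~\ref{cor-new}; no new construction is needed. The proof is therefore a short two-line argument once those two ingredients are in place.

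First I would invoke Theorem~\ref{Theorem 1a}. Since $X$ is a separable symmetric sequence space, that theorem applies and shows that conditions (i), (ii) and (iii) are pairwise equivalent, each of them being equivalent to the assertion that $2^{1/p}$ is an approximate eigenvalue of the operator $D=\tau_1\sigma_2$ in $X$. Next I would use the additional hypothesis that $X$ is of fundamental type in order to apply Corollary~\ref{cor-new}: the set of all positive approximate eigenvalues of $D$ in $X$ is exactly the closed interval $[2^{\alpha_X},2^{\beta_X}]$. Hence $2^{1/p}$ is an approximate eigenvalue of $D$ if and only if $2^{\alpha_X}\le 2^{1/p}\le 2^{\beta_X}$. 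Since $0\le\alpha_X\le\beta_X\le 1$, the endpoints lie in $[1,2]$, and the map $p\mapsto 2^{1/p}$ is a strictly decreasing bijection of $[1,\infty]$ onto $[1,2]$ (taking the value $1$ precisely at $p=\infty$); therefore the last inequality is equivalent to $\alpha_X\le 1/p\le\beta_X$, that is, to $p\in[1/\beta_X,1/\alpha_X]$ (with the usual convention $1/0=\infty$), which is exactly condition (iv). Chaining these equivalences yields the theorem.

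There is essentially no obstacle to overcome here: all the substantive work — the passage between the three finite-representability notions and the spectral condition on $D$ (Theorem~\ref{Theorem 1a}), and the computation of the set of approximate eigenvalues of $D$ via the reduction to the shift operator $\tau_1$ on $E_X$ together with the fundamental-type assumption (Lemma~\ref{dilation}, Proposition~\ref{Proposition 1}, Proposition~\ref{L2-new}) — has already been carried out. The only minor care required is the bookkeeping at the endpoints $p=1$ and $p=\infty$ and the extremal values $\alpha_X=0$, $\beta_X=1$; in particular the value $\lambda=1$, i.e.\ $p=\infty$, is handled uniformly inside Theorem~\ref{Theorem 1a}, which is why, in contrast to Rosenthal's Theorem~\ref{Th:Rosenthal}, no extra assumption (such as $c_0$ not being finitely represented in $X$) appears in the statement.
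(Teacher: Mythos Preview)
Your proposal is correct and follows exactly the paper's approach: the paper simply states that Theorem~\ref{Theorem 4} follows from Corollary~\ref{cor-new} together with Theorem~\ref{Theorem 1a}, without giving any further argument. Your write-up is in fact more detailed than the paper's, spelling out the monotone bijection $p\mapsto 2^{1/p}$ and the endpoint conventions that the paper leaves implicit.
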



%

\begin{remark}
By Theorem {\rm\ref{Th: Krivine3}}, if $p=1/\alpha_X$ or $1/\beta_X$, the space $\ell^p$ is symmetrically finitely represented in {\it every} (not necessarily of fundamental type) symmetric sequence space $X$.
\end{remark}

\vskip0.5cm

\section{Symmetric finite representability of $\ell^p$ in Lorentz and Orlicz spaces.}
\label{Lor and Orl}

Results obtained allow us to find the set ${\mathcal F}({X})$ if $X$ is a Lorentz or a separable Orlicz sequence space.

\subsection{Lorentz spaces.}

\def\vr{{\cal T}}
\def\gh{{\cal K}}
\def\hj{{\mathbb R}}
\def\bc{\sum_{k=0}^\infty}
\def\cd{{\mathbb N}}
\def\ef{l_q(\mu)}
\def\fg{{\mathbb N}}
\def\ob{\sum_{k=1}^\infty}
\def\hi{\sum_{k=1}^{\infty}a_kr_{k}}
\def\ji{{\cal J}}
\def\kl{(a_k)_{k=1}^{\infty}}
\def\mn{T_{\theta}}
\def\op{{\rm Im}T_{\theta}}
\def\pq{{\mathbb C}}
\def\zw{{\rm Ker}f_\theta}
\def\qr{{\mathbb N}}
\def\rs{X_{\theta,q}}
\def\st{{Y_{\theta,q}}}

Let $1\le q<\infty$, and let $\{w_k\}_{k=1}^\infty$ be a nonincreasing sequence of positive numbers.
Recall that the Lorentz space $\lambda_q(w)$ consists of all sequences $a=(a_k)_{k=1}^\infty$ such that
$$
\|a\|_{\lambda_q(w)}=\Big(\sum_{k=1}^\infty (a_k^*)^qw_k^q\Big)^{1/q}<\infty.$$

Since $\lambda_q(w)$ is a separable symmetric space of fundamental type (see Section \ref{prel3}), applying Theorem \ref{Theorem 4} implies

\begin{theor}\label{Theorem 4a}
For every Lorentz sequence space $\lambda_q(w)$ the following conditions are equivalent:

(i) $\ell^p$ is symmetrically block finitely represented in the unit vector basis $\{e_k\}$ of $\lambda_q(w)$;

(ii) $\ell^p$ is symmetrically finitely represented in $\lambda_q(w)$;

(iii) $\ell^p$ is crudely symmetrically finitely represented in $\lambda_q(w)$;

(iv) $p\in [1/\beta_{\lambda_q(w)},1/\alpha_{\lambda_q(w)}]$, where the Boyd indices $\alpha_{\lambda_q(w)}$ and $\beta_{\lambda_q(w)}$ are defined by \eqref{Boyd ind Lor} and \eqref{Boyd ind Lor1}.
\end{theor}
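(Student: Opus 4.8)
The plan is to deduce Theorem \ref{Theorem 4a} directly from Theorem \ref{Theorem 4}; accordingly the whole argument amounts to verifying that every Lorentz sequence space $\lambda_q(w)$ fulfils the three standing hypotheses of that theorem: it is a symmetric sequence space, it is separable, and it is of fundamental type.

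First I would dispatch separability and the lattice structure, both of which are already recorded in Section \ref{prel2}: the Hardy--Littlewood inequality exhibits $a\mapsto\|a\|_{\lambda_q(w)}$ as a symmetric norm, and it is noted there that every Lorentz sequence space is separable (and has the Fatou property). For fundamental type I would simply point to the computation performed right after the definition of that notion in Section \ref{prel3}: writing a finitely supported nonincreasing $x$, using $(\sigma_{2^n}x)^*=\sigma_{2^n}(x^*)$, and applying the Abel transformation to $\|\sigma_{2^n}x\|_{\lambda_q(w)}^q$ gives $\beta_{\lambda_q(w)}=\nu_{\lambda_q(w)}$ along with the closed form \eqref{Boyd ind Lor1}, and the mirror-image computation gives $\alpha_{\lambda_q(w)}=\mu_{\lambda_q(w)}$ with \eqref{Boyd ind Lor}; throughout one uses the fundamental-function identity \eqref{fund Lor}.

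Once these checks are in place, Theorem \ref{Theorem 4} applies word for word and produces the equivalence of conditions (i)--(iv), the interval being exactly $[1/\beta_{\lambda_q(w)},1/\alpha_{\lambda_q(w)}]$ with the Boyd indices displayed in \eqref{Boyd ind Lor}--\eqref{Boyd ind Lor1}. The only point to keep straight is the identification of the abstract Boyd indices (defined via the norms $\|\sigma_{2^n}\|_{\lambda_q(w)}$) with the concrete ratios of partial sums $\sum_{k=1}^{j}w_k^q$, which is precisely the content of the Abel-transformation step just cited. I do not expect any genuine obstacle here: all the substantive work --- the passage to approximate eigenvalues of $D$ (Theorem \ref{Theorem 1a}), the localization of the spectrum (Proposition \ref{prop 2}, Corollary \ref{gen case}), and the identification of the full interval for fundamental-type spaces (Proposition \ref{L2-new}, Corollary \ref{cor-new}) --- has already been carried out, so that Theorem \ref{Theorem 4a} is merely its harvest for the Lorentz class.
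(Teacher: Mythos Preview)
Your proposal is correct and follows essentially the same route as the paper: the paper's proof is the single sentence ``Since $\lambda_q(w)$ is a separable symmetric space of fundamental type (see Section~\ref{prel3}), applying Theorem~\ref{Theorem 4} implies'' the result, and your verification of the three hypotheses (symmetric, separable, fundamental type) by citing Sections~\ref{prel2}--\ref{prel3} is precisely this reduction spelled out.
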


Suppose additionally that 
\begin{equation}
\label{equa22} 
\lim_{n\to\infty}\left(\sup_{k=0,1,\dots}\frac{w_{2^k}}{w_{2^{k+n}}}\right)^{1/n}<2^{1/q}.
\end{equation}
By this assumption, we prove that the Banach lattice $E_{\lambda_q(w)}$ coincides with the weighted space $\ef$ equipped with the norm
$$
\|a\|_{\mu,q}:=\Big(\sum_{k=1}^\infty |a_{k}|^q\mu_k^q\Big)^{1/q},$$
where $\mu_k=2^{(k-1)/q}w_{2^{k-1}}$, $k=1,2,\dots$. Moreover, in this case the Boyd indices of $\lambda_q(w)$ can be calculated by the simpler (than \eqref{Boyd ind Lor} and \eqref{Boyd ind Lor1}) formulae:
\begin{equation}
\label{equa22simple} 
\alpha_{\lambda_q(w)}=-\lim_{n\to\infty}\frac1n\log_2\sup_{k=0,1,\dots}\frac{w_{2^k}}{w_{2^{k+n}}}\;\;\mbox{and}\;\;\beta_{\lambda_q(w)}=\lim_{n\to\infty}\frac1n\log_2\sup_{k>n}\frac{w_{2^k}}{w_{2^{k-n}}}.\end{equation}
 
Show first that 
\begin{equation}
\label{equa21} 
\|x\|_{\lambda_q(w)}\asymp\Big\|\sum_{k=0}^\infty x_{2^{k}}^*e_{k+1}\Big\|_{\ef}.
\end{equation}
Indeed, assuming for simplicity of notation that $x=x^*$, we get
$$ 
\|x\|_{\lambda_q(w)}^q=\sum_{k=1}^\infty\sum_{i=2^{k-1}}^{2^{k}-1} x_i^qw_i^q\le\sum_{k=1}^\infty 2^{k-1}x_{2^{k-1}}^qw_{2^{k-1}}^q= \sum_{k=0}^\infty x_{2^{k}}^q2^kw_{2^{k}}^q.
$$
On the other hand, one has 
$$ 
2\|x\|_{\lambda_q(w)}^q=x_1^qw_1^q+\sum_{k=1}^\infty x_{2^{k}}^q\sum_{i=2^{k-1}}^{2^{k}-1} w_i^q\ge x_1^qw_1^q+\sum_{k=1}^\infty x_{2^{k}}^q2^{k-1}w_{2^{k}}^q\ge\frac12 \sum_{k=0}^\infty x_{2^{k}}^q2^{k}w_{2^{k}}^q,
$$
and so \eqref{equa21} follows.

Moreover, since $\|e_k\|_{\ef}=\mu_k$, $k=1,2,\dots$, one can easily see that
$$
\|\tau_{-n}\|_{\ef\to \ef}=\sup_{k=1,\dots}\frac{\mu_k}{\mu_{k+n}}=2^{-n/q}\sup_{k=0,1,\dots}\frac{w_{2^k}}{w_{2^{k+n}}},\;\;n\in\mathbb{N},$$
and
$$
\|\tau_{n}\|_{\ef\to \ef}=\sup_{k>n}\frac{\mu_k}{\mu_{k-n}}=2^{n/q}\sup_{k>n}\frac{w_{2^k}}{w_{2^{k-n}}},\;\;n\in\mathbb{N}.$$
From the first of these equalities and \eqref{equa22} it follows 
$$
k_-(\ef)=\lim_{n\to\infty}\|\tau_{-n}\|_{\ef\to \ef}^{1/n}<1.$$
This relation combined with equivalence \eqref{equa21} indicates that the Banach lattice $\ef$ satisfies all the conditions of Proposition \ref{prop:latttice and symm}, and hence we have $\ef=E_{\lambda_q(w)}$ (with equivalence of norms). Moreover, by the above formulae for the norms of the shift operators in $\ef$ and Lemma \ref{dilation}, we get \eqref{equa22simple}.

\subsection{Orlicz spaces.}

Given an Orlicz function $N$, the Orlicz sequence space $l_N$  consists of all sequences $a=(a_k)_{k=1}^\infty$, for which the norm
$$
\|a\|_{l_N}=\inf\left\{u>0:\,\sum_{k=1}^\infty N\Big(\frac{|a_k|}{u}\Big)\le 1\right\}$$
is finite. Recall that $l_N$ is separable if and only if the function $N$ satisfies the $\Delta_2$-condition at zero (see Section \ref{prel2}). It can be easily verified that the latter condition is equivalent to the fact that the lower Boyd index of $l_N$ (see \eqref{equa24} or \eqref{equa24om}) is positive. Since $l_N$ is of fundamental type (cf. Section \ref{prel3} or \cite{Boyd}), we arrive at the following characterization of the set of $p$ such that $\ell^p$ is symmetrically finitely represented in Orlicz spaces.

\begin{theor}\label{Theorem 4b}
Let $N$ be an Orlicz function such that for some $r>0$
$$
\inf_{0<st\le 1}\frac{N(st)}{N(s)t^r}>0.$$
Then, the following conditions are equivalent:

(i) $\ell^p$ is symmetrically block finitely represented in the unit vector basis $\{e_k\}$ of the Orlicz space $l_N$;

(ii) $\ell^p$ is symmetrically finitely represented in $l_N$;

(iii) $\ell^p$ is crudely symmetrically finitely represented in $l_N$;

(iv) $p\in [1/\beta_{l_N},1/\alpha_{l_N}]$, where $\alpha_{l_N}$ and $\beta_{l_N}$ are the Boyd indices of $l_N$ (cf. \eqref{equa24} or 
 \eqref{equa24om}).
\end{theor}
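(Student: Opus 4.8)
The plan is to derive Theorem~\ref{Theorem 4b} from Theorem~\ref{Theorem 4} by checking that, under the stated hypothesis on $N$, the Orlicz space $l_N$ belongs to the class covered by the latter theorem, i.e.\ that $l_N$ is a \emph{separable} symmetric sequence space \emph{of fundamental type}. Once this is done, the equivalences $(i)\Leftrightarrow(ii)\Leftrightarrow(iii)\Leftrightarrow(iv)$ are exactly those of Theorem~\ref{Theorem 4}, with the interval in $(iv)$ being $[1/\beta_{l_N},1/\alpha_{l_N}]$ since $\alpha_{l_N}$ and $\beta_{l_N}$ are the Boyd indices of $l_N$.

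First I would deal with separability. Recall (Section~\ref{prel2}) that $l_N$ is separable if and only if $N$ satisfies the $\Delta_2$-condition at zero, and, as noted just before the statement of the theorem, the $\Delta_2$-condition at zero is equivalent to $\alpha_{l_N}>0$. Hence it suffices to observe that the hypothesis ``there exists $r>0$ with $\inf_{0<s,t\le 1}N(st)/(N(s)t^{r})>0$'' is merely a restatement of $\alpha_{l_N}>0$. This is read off from the formulae \eqref{equa24} and \eqref{equa24om}: the displayed infimum is positive precisely when, up to a multiplicative constant, $u\mapsto N(u)/u^{r}$ is nonincreasing on $(0,1]$, and choosing $r=\log_2 C$ with $C$ a $\Delta_2$-constant of $N$ (i.e.\ $N(2u)\le CN(u)$ for $u$ near $0$) shows this to be equivalent to the $\Delta_2$-condition at zero. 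Thus $l_N$ is a separable symmetric sequence space.

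Next I would invoke the fact, recorded in Section~\ref{prel3} (see \cite{Boyd} or \cite[Theorem~4.2]{Ma-85}), that every Orlicz sequence space is of fundamental type, so that $\alpha_{l_N}=\mu_{l_N}$ and $\beta_{l_N}=\nu_{l_N}$. Therefore $l_N$ satisfies all the hypotheses of Theorem~\ref{Theorem 4}, and applying it gives the asserted equivalence of $(i)$--$(iv)$; the Boyd indices appearing in $(iv)$ may moreover be computed via \eqref{equa24} or \eqref{equa24om}.

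The proof is thus essentially a verification of hypotheses, and I do not expect a genuine obstacle. The only step requiring some care is the first one --- matching the analytic condition imposed on $N$ with the positivity of $\alpha_{l_N}$ (equivalently, with the $\Delta_2$-condition at zero) --- where one must keep track of the precise form of the dilation estimates behind \eqref{equa24} and \eqref{equa24om} and, in particular, isolate the behaviour of $N$ near zero (which governs $l_N$ as a sequence space) from its behaviour away from zero.
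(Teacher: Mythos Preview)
Your proposal is correct and follows essentially the same route as the paper: verify that the hypothesis on $N$ amounts to $\alpha_{l_N}>0$ (equivalently, the $\Delta_2$-condition at zero, hence separability of $l_N$), invoke the fact from Section~\ref{prel3} that Orlicz spaces are of fundamental type, and then apply Theorem~\ref{Theorem 4}. One small point to tidy up: the theorem's hypothesis is stated with the domain $0<st\le 1$, whereas \eqref{equa24om} uses $0<s,t\le 1$; make sure you reconcile these (for sequence spaces only behaviour near zero matters, so the two conditions are equivalent once $N(1)=1$ is in place).
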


%


In conclusion, we prove\footnote{Simultaneously we give one more independent proof of formulae  \eqref{equa24}.}  that $E_{l_N}=U_N$ (with equivalence of norms), where $U_N$ is the Banach sequence lattice equipped with the norm
$$
\|a\|_{U_N}:=\inf\left\{u>0:\,\sum_{k=1}^\infty 2^{k-1} N\Big(\frac{|a_k|}{u}\Big)\le 1\right\}.$$

We claim that
\begin{equation}
\label{equa26} 
\|a\|_{l_N}\asymp\Big\|\sum_{k=0}^\infty a_{2^{k}}^*e_{k+1}\Big\|_{U_N}.
\end{equation}
First, assuming again that $a=a^*$ and using the monotonicity of $N$, we obtain
\begin{equation}
\label{equa27} 
\sum_{k=1}^\infty N\Big(\frac{a_k}{u}\Big)=\sum_{k=1}^\infty\sum_{i=2^{k-1}}^{2^{k}-1} N\Big(\frac{a_i}{u}\Big)\le\sum_{k=1}^\infty 2^{k-1} N\Big(\frac{a_{2^{k-1}}}{u}\Big)=\sum_{k=0}^\infty 2^kN\Big(\frac{a_{2^{k}}}{u}\Big),
\end{equation}
and similarly in the opposite direction
\begin{equation}
\label{equa28} 
2\sum_{k=1}^\infty N\Big(\frac{a_k}{u}\Big)\ge N\Big(\frac{a_1}{u}\Big)+\sum_{k=1}^\infty 2^{k-1} N\Big(\frac{a_2^{k}}{u}\Big)\ge\frac12\sum_{k=0}^\infty 2^kN\Big(\frac{a_{2^{k}}}{u}\Big).
\end{equation}

Next, if $u > \|\sum_{k=0}^\infty a_{2^{k}}e_{k+1}\|_{U_N}$, from the 
definition of the latter norm and inequality \eqref{equa27} it follows
$$
\sum_{k=1}^\infty N\Big(\frac{a_k}{u}\Big)\le \sum_{k=0}^\infty 2^{k} N\Big(\frac{a_{2^k}}{u}\Big)\le 1,$$
which implies that $u\ge \|a\|_{l_N}$. Hence, $\|a\|_{l_N}\le\|\sum_{k=0}^\infty a_{2^{k}}e_{k+1}\|_{U_N}$.

Conversely, if $u >\|a\|_{l_N}$, then from \eqref{equa28} and the convexity of $N$ it follows
$$
\sum_{k=0}^\infty 2^{k} N\Big(\frac{a_{2^k}}{4u}\Big)\le \frac14\sum_{k=0}^\infty 2^{k} N\Big(\frac{a_{2^k}}{u}\Big)\le\sum_{k=1}^\infty N\Big(\frac{a_k}{u}\Big)\le 1,$$
whence $4u\ge \|\sum_{k=0}^\infty a_{2^{k}}e_{k+1}\|_{U_N}$. Consequently, $\|\sum_{k=0}^\infty a_{2^{k}}e_{k+1}\|_{U_N}\le 4\|a\|_{l_N}$, and so equivalence \eqref{equa26} is proved.

Further, we want to show that
\begin{equation}
\label{equa29} 
\|\tau_{-n}\|_{E_N\to E_N}\asymp \sup_{k=0,1,\dots}\frac{N^{-1}(2^{-k-n})}{N^{-1}(2^{-k})}\;\;\mbox{and}\;\;\|\tau_n\|_{E_N\to E_N}\asymp \sup_{k\ge n}\frac{N^{-1}(2^{-k+n})}{N^{-1}(2^{-k})},\;\;n\in\mathbb{N}.
\end{equation}

Observe first that for any $s\in (0,1]$ and $n\in\mathbb{N}$ 
\begin{equation}
\label{equa30} 
\frac{N^{-1}(2^{-n+1}s)}{N^{-1}(s)}\le 4A_n,
\end{equation}
where
$$
A_n:=\sup_{m=0,1,\dots}\frac{N^{-1}(2^{-m-n})}{N^{-1}(2^{-m})},\;\;n\in\mathbb{N}.$$
Indeed, choosing positive integer $m$ so that $s\in (2^{-m},2^{-m+1}]$, because the inverse function $N^{-1}$ is nondecreasing and concave, we get
$$
\frac{N^{-1}(2^{-n+1}s)}{N^{-1}(s)}\le\frac{N^{-1}(2^{-n-m+2})}{N^{-1}(2^{-m})}\le \frac{4N^{-1}(2^{-n-m})}{N^{-1}(2^{-m})}\le 4A_n,$$
and \eqref{equa30} follows.

Let us check now that for every $t>0$ such that $2^{n-1}N(t)\le 1$ it holds
\begin{equation}
\label{equa31} 
N\Big(\frac{t}{4A_n}\Big)\le 2^{n-1}N(t).
\end{equation}
Indeed, passing to the inverses, we get 
$$
t\le 4A_nN^{-1}(2^{n-1}N(t)),$$
and then, setting $s=2^{n-1}N(t)$,
$$
N^{-1}(2^{-n+1}s)\le 4A_nN^{-1}(s).$$
Since $0<s\le 1$, the latter inequality is a consequence of \eqref{equa30}, and thus \eqref{equa31} is proved.

Suppose $n\in\mathbb{N}$, $a=(a_k)_{k=1}^\infty\in U_N$ and $u>\|a\|_{U_N}$. Then,
$$
\sum_{k=1}^\infty 2^{k-1} N\Big(\frac{|a_k|}{u}\Big)\le 1.$$
Consequently, $2^{n-1}N(|a_{n+k}|/u)\le 1$ for each $k\in\fg$. 
Applying now inequality \eqref{equa31} for $t=|a_{n+k}|/u$, $k\in\fg$, we get
\begin{eqnarray*}
\sum_{k=1}^\infty 2^{k-1} N\Big(\frac{|(\tau_{-n}a)_{k}|}{4A_nu}\Big)&=&\sum_{k=1}^\infty 2^{k-1}N\Big(\frac{|a_{k+n}|}{4A_nu}\Big)\\&\le& \sum_{k=1}^\infty 2^{k+n-2} N\Big(\frac{|a_{k+n}|}{u}\Big)\\&\le&\sum_{k=1}^\infty 2^{k-1} N\Big(\frac{|a_{k}|}{u}\Big)\le 1,
\end{eqnarray*}
which implies that $\|\tau_{-n}a\|_{U_N}\le 4A_n$. Therefore, by the 
definition of $A_n$, we infer that 
$$
\|\tau_{-n}\|_{U_N\to U_N}\le 4\sup_{m=0,1,\dots}\frac{N^{-1}(2^{-m-n})}{N^{-1}(2^{-m})},\;\;n\in\mathbb{N}.$$
Since from the definition of $U_N$ it follows that $\|e_k\|_{U_N}=1/N^{-1}(2^{-k+1})$, $k=1,2,\dots$, the opposite inequality
$$
\|\tau_{-n}\|_{U_N\to U_N}\ge \sup_{m=0,1,\dots}\frac{N^{-1}(2^{-m-n})}{N^{-1}(2^{-m})},\;\;n\in\mathbb{N},$$
is obvious. Therefore, the first equivalence in \eqref{equa29} is  proved. The proof of the second equivalence in \eqref{equa29} is quite similar, and we skip it.

From \eqref{equa29} it follows that
$$
k_-(U_N)=\lim_{n\to\infty}\sup_{m=0,1,\dots}\frac{N^{-1}(2^{-m-n})}{N^{-1}(2^{-m})}\;\;\mbox{and}\;\;k_+(U_N)= \lim_{n\to\infty}\sup_{m\ge n}\frac{N^{-1}(2^{-m+n})}{N^{-1}(2^{-m})}.$$ 

As was said above, if $l_N$ is separable, we have $\alpha_{l_N}>0$. Then, comparing the latter formula for $k_-(U_N)$ with the first formula  in \eqref{equa24}, we get $k_-(U_N)<1$. Finally, combining this together with equivalence \eqref{equa26}, by Proposition \ref{prop:latttice and symm}, we conclude that $E_{l_N}=U_N$, completing the proof of our claim.

\newpage

\end{document}